\newcommand{\Caty}{\mathsf{Cat}_{\infty}}
\newcommand{\Fun}{\mathsf{Fun}}
\newcommand{\SSp}{s\mathcal{S}}
\newcommand{\PSh}{\mathsf{PSh}}
\newcommand{\he}{\textup{he}}
\newcommand{\op}{\textup{op}}
\newcommand{\sW}{\mathcal{W}}
\begin{document}


\title{Segal objects and the Grothendieck construction}
\author{Pedro Boavida de Brito}%
\address{Dept. of Mathematics, Instituto Superior Tecnico, Univ. of Lisbon, Av. Rovisco Pais, Lisboa, Portugal}%
\email{pedrobbrito@tecnico.ulisboa.pt}
\thanks{The author is supported by a FCT grant SFRH/BPD/99841/2014.}

\begin{abstract}
We discuss right fibrations in the $\infty$-categorical context of Segal objects in some category $\sV$ and prove some basic results about these.
\end{abstract}
\maketitle



Right fibrations and Cartesian fibrations (and their dual notions) form the backbone of $\infty$-category theory \cite{Joyal}, \cite{HTT}. The purpose of this note is to investigate them from the point of view of Rezk's Segal spaces and, more generally, Segal objects in some enriching category $\sV$. Due to the known comparison between (complete) Segal spaces and quasi-categories, our arguments also give an alternative route to proving the quasi-categorical analogues, a direct proof of which is often technically more involved. More importantly for us, Segal spaces provide natural models for infinity categories in geometric contexts, and it seems useful to have some theory developed specifically for them.

\medskip
Let us briefly state the main definitions and results. Throughout, $\Sp$ denotes the category of spaces (\emph{space} may be used as a synonym for \emph{simplicial set}). Let $B$ be a Segal space, not necessarily complete (for definitions, see section \ref{sec:gc}). We refer to $B_0$ as the space of objects, $B_1$ as the space of morphisms, $d_0$ is the operator \emph{source} and $d_1$ is the operator \emph{target}. A \textbf{right fibration} over $B$ is a Segal space $X$ together with a map $X \to B $ having the property that the square
\begin{equation}\label{eq:rfibsquare}
	\begin{tikzpicture}[descr/.style={fill=white},baseline=(current bounding box.base)]] 
	\matrix(m)[matrix of math nodes, row sep=2.5em, column sep=2.5em, 
	text height=1.5ex, text depth=0.25ex] 
	{
	X_0 & X_1 \\
	B_0 & B_1 \\
	}; 
	\path[<-,font=\scriptsize] 
		(m-1-1) edge node [auto] {$d_1$} (m-1-2);
	\path[<-,font=\scriptsize] 
		(m-2-1) edge node [auto] {$d_1$} (m-2-2);
	\path[->,font=\scriptsize] 
		(m-1-1) edge node [left] {} (m-2-1);
	\path[->,font=\scriptsize] 		
		(m-1-2) edge node [auto] {} (m-2-2);
	\end{tikzpicture} 
\end{equation}
is homotopy cartesian. A map between right fibrations over $B$ is a weak equivalence if it is a weak  equivalence on $0$-simplices (alias object spaces).

\medskip
 After defining a model structure whose homotopy theory is that of right fibrations, we prove the following strictification result.

\begin{thmA} Let $\sC$ be a category (possibly simplicially enriched or, more generally, internal to spaces).
The Grothendieck construction (see section \ref{sec:gc}) is a simplicial Quillen equivalence between the right fibration model structure on the category $\SSp_{/ N \sC}$ of simplicial spaces over the nerve of $\sC$ and the projective model structure on $\Fun(\sC^{op}, \Sp)$, the category of contravariant simplicial functors from $\sC$ to spaces.
\end{thmA}

The next step in generality are Cartesian fibrations. Still for a fixed Segal space $B$ (which we view as a bisimplicial space with $(m,n)$-simplices $B_m$), we say that a map $X \to B$ of bisimplicial spaces is a \textbf{Cartesian fibration}  if the simplicial spaces $X_{\bullet, n}$ and $X_{m, \bullet}$ are Segal spaces for all $n$ and $m$, and the map $$X_{\bullet,n} \to B_{\bullet,n}$$ is a right fibration for every $n$.
Moreover, a map $X \to Y$ between Cartesian fibrations is a weak equivalence if is it a Dwyer-Kan equivalence on $0$-simplices $X_{0, \bullet} \to Y_{0, \bullet}$.

\medskip

After defining a model structure whose underlying homotopy theory is that of Cartesian fibrations, we prove the analogous strictification statement below.

\begin{thmB}
The Grothendieck construction is a right Quillen equivalence between the right fibration model structure on $s\SSp_{/ N \sC}$ and the projective model structure on $\Fun(\sC^{op}, \Caty)$. It is moreover an equivalence of $(\infty,2)$-categories.
\end{thmB}

Here $\Caty$ refers to a category of $(\infty,1)$-categories, which for concreteness we take to be Rezk's model category of complete Segal spaces \cite{Rezk}.

\medskip

The definition of a right fibration of (complete) Segal spaces is not new nor unexpected; Michael Shulman has mentioned a similar definition in unpublished notes of Charles Rezk, and Ilan Barnea pointed out to us that a definition has appeared (independently) in \cite{Kazhdan}. The definition of a Cartesian fibration above has a different flavor to the construction that usual bears that name (in categories or quasi-categories). Theorem B argues that from a certain point of view both try to achieve the same (more on this in remark \ref{rem:cartfib}).

\medskip
Theorems A and B have counterparts in the quasi-categorical setting that have appeared in the work of Joyal \cite{Joyal}, Lurie \cite{HTT} and Heuts-Moerdjik \cite{HeutsMoerdijk1}, \cite{HeutsMoerdijk2}. An exception is the internal version of these theorems, as far as we know. Rezk's theory of complete Segal spaces is manifestly under-developed when compared to that of quasi-categories, and we hope this fact is enough justification for this paper. Moreover, the approach used here seems well suited to generalizations, for example to the $(\infty,n)$ setting.

\subsection*{Outline} In section \ref{sec:gc}, after some preliminary work, we prove theorem A. The key ingredient is a version of the Yoneda lemma (lemma \ref{lem:yoneda}). We also show that right fibrations are fibrations in the \emph{complete} Segal space model structure, and directly relate our definition of a right fibration to the quasi-categorical one. In section \ref{sec:general} we define the homotopy theory underlying right fibration objects in a general category $\sV$, and in section \ref{sec:cartfib} we apply that to define the homotopy theory of Cartesian fibrations and prove theorem B. In section \ref{sec:colim} we describe (co)limits of right fibrations. In section \ref{sec:kan} we give an explicit formula for Kan extensions of a right fibration in spaces and then show that the homotopy theory of right fibrations is invariant under (Dwyer-Kan) equivalences of the base.

\subsection*{Notation} 
As customary, we denote the simplex category by $\Delta$. Objects in $\Delta$ are finite totally ordered sets $[n]= \{ 0 < 1 < \dots < n\}$, for $n \geq 0$, and morphisms are order-preserving maps.
Often we implicitly view $[n]$ as a category $0 \to 1 \to \dots \to n$.

 We write $s\Sp$ for the category of simplicial spaces, or more generally, $s\sV$ for the category of simplicial objects in a category $\sV$, that is, contravariant functors from $\Delta$ to $\sV$. We frequently and unapologetically regard a simplicial set $X$ as a simplicial \emph{discrete} space with $n$-simplices given by the constant simplicial set $X_n$. In particular, we keep the notation $\Delta[n]$ for the simplicial discrete space (which Rezk refers to as $F[n]$). For a fixed object $C$ in $s\sV$, we write $s\sV_{/C}$ for the category of simplicial objects over $B$. 

 For a category $\sC$, which may be enriched or internal in spaces, we write $N\sC$ for the nerve of $\sC$, i.e. the simplicial \emph{space} whose space of $0$-simplices is the space $\textup{ob}(C)$ of objects of $\sC$, the space of $1$-simplices is the space $\textup{mor}(\sC)$ of morphisms of $\sC$ and whose space of $n$-simplices is the space of $n$ composable morphisms 
 $$c_0 \gets \dots \gets c_n$$ of $\sC$ (which is expressed as an iterated pullback). Applied to such a string, the face map $d_i$ drops $c_{i}$ and the degeneracy map $s_i$ replaces $c_{i}$ by an identity map. With these conventions, we follow Bousfield-Kan \cite{BousfieldKan}, but we warn the reader that these differ from other sources (e.g. Rezk).

\subsection*{Acknowledgments}
Thanks to Geoffroy Horel and Mike Shulman for discussions and encouragement; it was their suggestion that we upgrade the result to internal categories and include the relation with quasi-categories. The author is also grateful to the referee for a careful reading and helpful comments. An earlier iteration of this paper appeared in the author's PhD thesis.

\section{Left and Right fibrations}\label{sec:gc}

\subsection{Segal spaces}

The following definition is due to Charles Rezk. It is a model for the umbrella term $\infty$-\emph{category}.

\begin{defn}[\cite{Rezk}]
A \textbf{Segal space} is a simplicial space $X$ such that, for each $n \ge 2$, the map 
\begin{equation}\label{eq:segalmaps}
X_n \xrightarrow{(\alpha_0^*, \dots, \alpha_{n-1}^*)} \holim (X_1 \xrightarrow{d_0} X_0 \xleftarrow{d_1} X_1 \xrightarrow{d_0} \dots  \xleftarrow{d_1} X_1 )
\end{equation}
is a weak equivalence of spaces, where $\alpha_i$ is the map $\Delta[1] \to \Delta[n]$ sending $0 < 1$ to $i < i+1$.
\end{defn}

Elements of $X_0$ are called \emph{objects} and elements of $X_1$ are called \emph{morphisms}.

\begin{expl}\label{ex:segal}
The nerve of an internal category in spaces is a Segal space if either the source or target map is a fibration, using that the category of simplicial sets is right proper. 
The nerve of a simplicially enriched category is always a Segal space. Indeed, it satisfies the non-derived version of (\ref{eq:segalmaps}) and we may assume that each space $X_i$ is fibrant (for example by applying Kan's $\textup{Ex}^\infty$ functor which commutes with finite limits). Then the source-target operators map onto a discrete space and so are fibrations and in that case the relevant pullbacks are homotopy pullbacks.
\end{expl}

\begin{defn}\label{defn:mapspace}
Let $X$ be a Segal space. Given two objects $x, y \in X_0$, the space of morphisms $\textup{mor}_X(x,y)$ is defined as the homotopy fiber, over $(x,y)$, of
\[
X_1 \xrightarrow{(d_0, d_1)} X_0 \times X_0 \; .
\]
Two maps $f, g \in \textup{mor}_X(x,y)$ are said to be homotopic if they lie in the same component. The \emph{homotopy category} of $X$ is the category $\mathsf{Ho}(X)$ with objects $X_0$ and morphisms $\map_{\mathsf{Ho}(X)}(x,y) := \pi_0 \textup{mor}_X(x,y)$. A map $f \in \textup{mor}_X(x,y)$ is a \emph{homotopy equivalence} if it admits a left and right inverse in $\mathsf{Ho}(X)$. 
\end{defn}

The space of homotopy equivalences is denoted $X_1^{\he}$. It is necessarily a union of components of $X_1$ since if a map can be connected by a path to a homotopy equivalence, then it must itself be a homotopy equivalence \cite[lemma 5.8]{Rezk}.

\begin{defn}\label{defn:DKcat}
A simplicial map $i: C \to D$ between Segal spaces is a \emph{Dwyer-Kan equivalence} if it is fully faithful, i.e. for each pair of objects $x, y \in C_0$, the induced map
\[
\textup{mor}_C(x,y) \rightarrow \textup{mor}_D(f(x), f(y))
\]
 is a weak equivalence of spaces, and $\mathsf{Ho}(f)\co \mathsf{Ho}(X) \rightarrow \mathsf{Ho}(Y)$ is essentially surjective.
\end{defn}

\begin{expl}
Suppose $(\sC,\mathcal{W})$ is a category $\sC$ with a subcategory $\mathcal{W}$ containing all objects. Such a pair $(\sC,\mathcal{W})$ goes by the name of \emph{relative category} in \cite{BarwickKan1}. From a relative category $(\sC, \sW)$, Rezk defines a Segal space $N(\sC,\sW)$, the \emph{Rezk nerve} (Rezk calls it the \emph{classification diagram}) which retains the homotopical information of the pair. The simplicial set $N(\sC, \sW)_m$ of $m$-simplices is defined as the nerve of the category of contravariant functors $[n] \times [m] \to \sC$ which send all the arrows in the $n$-direction to $\sW$.

 An older, closely related construction is the \emph{hammock localization} of Dwyer-Kan \cite{DwyerKan}. This is a simplicially enriched category $L^H\sC$ which is a simplicial enhancement of the classical localization, in the sense that the category obtained by taking $\pi_0$ of the morphism spaces in $L^H\sC$ is the classical localization of $\sC$ with respect to $\mathcal{W}$. If $\sC$ happens to be a simplicially enriched model category, then the morphism spaces in $L^H \sC$ are identified with the derived mapping spaces computed relative to the model structure on $\sC$. 

 In \cite{BarwickKan2}, Barwick and Kan show that the Rezk nerve and the usual nerve of the hammock localization agree. More precisely, they show that $N(L^H \sC)$, the nerve of $L^H \sC$, is naturally weakly equivalent (in the complete Segal space sense of \cite{Rezk}) to $N(\sC,\sW)$\footnote{In more detail, Barwick-Kan show that the Rezk nerve of $(\sC,\sW)$ is weakly equivalent (in the model structure for complete Segal spaces) to the Rezk nerve of the relativization of $L^H \sC$ \cite[proposition 3.1]{BarwickKan2}, and in \cite[proposition 1.11]{BarwickKan2} that the Rezk nerve of the relativization of a simplicially enriched category $A$ is naturally weakly equivalent to $N(A)$, the (usual) nerve of $A$. (Warning: Barwick-Kan use the letter $N$ for the Rezk nerve, and the letter $Z$ for what we refer to as the nerve.)}. A map between Segal spaces is a complete Segal weak equivalence if and only if it is a Dwyer-Kan equivalence. So if $N(\sC,\sW)$ is a Segal space (e.g. if $\sC$ has a model category structure with weak equivalences given by $\sW$) then $N(\sC,\sW)$ is Dwyer-Kan equivalent to $N(L^H \sC)$.
\end{expl}

\subsection{Right fibrations}

Let $F$ be a \emph{contravariant} functor from $\sC$ to spaces (possibly enriched). The \emph{Grothendieck construction} of $F$ is a simplicial space $\sG(F)$ over $N\sC$ defined as
\[
\sG(F)_n := \coprod_{c_0, \dots, c_n} \map_{\sC}(c_n, c_{n-1}) \times \dots \times \map_{\sC}(c_{1}, c_0) \times F(c_0)
\]
Degeneracies select the identity maps. Face maps encode functoriality. For example, the maps $d_0$ and $d_1$ 
\[
\coprod_{c_0, c_1} \map_{\sC}(c_1, c_0) \times F(c_0) \rightarrow \coprod_{c_0} F(c_0)
\]
are composition and projection, respectively (remember $F$ is contravariant). As defined, $\sG(F)$ is the nerve of an internal category in spaces (i.e. a category object in spaces, which we may assume are all fibrant as in Example \ref{ex:segal}) and the target map is a projection, hence is a fibration. So $\sG(F)$ is a Segal space. There is reference functor from $\sG(F)$ to the nerve of $\sC$ and it is easy to see that $\sG$ is fully faithful as a functor $\Fun(\sC^{op}, \Sp) \to \SSp_{/N\sC}$. Clearly, the Grothendieck construction $\sG(F) \to N\sC$ of a \emph{contravariant} functor $F$ is a right fibration.

\begin{rem}
The dual notion to a right fibration is a \emph{left fibration}. A map $p : X \to B$ between Segal spaces is a left fibration if $X^{op} \to B^{op}$ is a right fibration\footnote{By definition, the opposite of a Segal space $Y$ is the Segal space $Y^{op}$ obtained by reversing the order of the simplicial maps, i.e. $Y^{op}_{n} := Y_n$, $d_{i} : Y_n^{op} \to Y_{n-1}^{op}$ is defined as $d_{n-i} : Y_n \to Y_{n-1}$ and similarly for $s_i$.}. Alternatively, $p$ is a left fibration if the variant of the square (\ref{eq:rfibsquare}) with $d_1$ replaced by $d_0$, is homotopy cartesian. The Grothendieck construction of a \emph{covariant} functor is a \emph{left} fibration.
\end{rem}

\begin{prop}\label{prop:rightfibequiv}
Suppose that the square (\ref{eq:rfibsquare}) is homotopy cartesian for a map of simplicial spaces $X \to B$ and $B$ is a Segal space. The following statements are equivalent.
\begin{enumerate}
\item $X$ is a Segal space (i.e. $X \to B$ is a right fibration)
\item For each $n \ge 1$, the diagram
\[
	\begin{tikzpicture}[descr/.style={fill=white}, baseline=(current bounding box.base)] ]
		\matrix(m)[matrix of math nodes, row sep=2.5em, column sep=2.5em, 
	text height=1.5ex, text depth=0.25ex] 
	{
	X_0 & X_n \\
	B_0 & B_n \\
	}; 
	\path[<-,font=\scriptsize] 
		(m-1-1) edge node [auto] {$a_n^*$} (m-1-2);
	\path[<-,font=\scriptsize] 
		(m-2-1) edge node [auto] {$a_n^*$} (m-2-2);
	\path[->,font=\scriptsize] 
		(m-1-1) edge node [left] {} (m-2-1);
	\path[->,font=\scriptsize] 		
		(m-1-2) edge node [auto] {} (m-2-2);
	\end{tikzpicture} 
\]
is homotopy cartesian, where $a_{n} : [0] \rightarrow [n]$ is the map sending $0$ to $0$.
\item For each $n \ge 1$, the diagram
\[
	\begin{tikzpicture}[descr/.style={fill=white}] 
	\matrix(m)[matrix of math nodes, row sep=2.5em, column sep=2.5em, 
	text height=1.5ex, text depth=0.25ex] 
	{
	X_{n-1} & X_n \\
	B_{n-1} & B_n \\
	}; 
	\path[<-,font=\scriptsize] 
		(m-1-1) edge node [auto] {$d_n$} (m-1-2);
	\path[<-,font=\scriptsize] 
		(m-2-1) edge node [auto] {$d_n$} (m-2-2);
	\path[->,font=\scriptsize] 
		(m-1-1) edge node [left] {} (m-2-1);
	\path[->,font=\scriptsize] 		
		(m-1-2) edge node [auto] {} (m-2-2);
	\end{tikzpicture} 
\]
is homotopy cartesian, where $d_n$ is induced by the face map $d^n : [n-1] \rightarrow [n]$ which misses $n$.
\end{enumerate}
\end{prop}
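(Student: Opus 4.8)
The plan is to dispatch the easy equivalence (2) $\Leftrightarrow$ (3) by a pasting argument, and then to reduce the substantive equivalence (1) $\Leftrightarrow$ (2) to a spine-level computation. Throughout I abbreviate by $\mathrm{Sp}(X)_n$ the homotopy limit occurring as the target of the Segal map (\ref{eq:segalmaps}), so that ``$X$ is a Segal space'' means the comparison $X_n \to \mathrm{Sp}(X)_n$ is a weak equivalence for every $n \ge 2$. Note first that the case $n=1$ of both (2) and (3) is literally the square (\ref{eq:rfibsquare}), hence homotopy cartesian by hypothesis; this is the base of the inductions below.

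For (2) $\Leftrightarrow$ (3), I would observe that the cosimplicial map $a_n\colon [0]\to[n]$ factors as $[0]\xrightarrow{a_{n-1}}[n-1]\xrightarrow{d^n}[n]$, so that $a_n^* = a_{n-1}^*\circ d_n$. Consequently the square in (2) at level $n$ is obtained by horizontally pasting the square in (3) at level $n$ onto the square in (2) at level $n-1$. The pasting law for homotopy cartesian squares --- given that the left-hand square is homotopy cartesian, the right-hand square is homotopy cartesian if and only if the outer rectangle is --- then yields the equivalence by induction on $n$: assuming (3) at all levels, (2)@$(n-1)$ and (3)@$n$ give (2)@$n$; conversely (2) at all levels feeds (2)@$(n-1)$ and (2)@$n$ back through the same law to give (3)@$n$.

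For (1) $\Leftrightarrow$ (2), the idea is to compare the Segal map of $X$ with the map $\phi_n\colon X_n \to X_0 \times^h_{B_0} B_n$ induced by $a_n^*$ and $X\to B$, which is a weak equivalence if and only if the square in statement (2) is homotopy cartesian. I would fit both into a commuting square
\[
\begin{CD}
X_n @>>> \mathrm{Sp}(X)_n \\
@V{\phi_n}VV @VV{\psi_n}V \\
X_0 \times^h_{B_0} B_n @>>> X_0 \times^h_{B_0} \mathrm{Sp}(B)_n
\end{CD}
\]
whose bottom arrow is $\mathrm{id}_{X_0}\times(\text{Segal map of }B)$ and is therefore a weak equivalence because $B$ is a Segal space. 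Granting that the right-hand arrow $\psi_n$ is always a weak equivalence, two-out-of-three forces $\phi_n$ to be a weak equivalence precisely when the Segal map of $X$ is; letting $n$ range over $n\ge 2$ (the cases $n=0,1$ being trivial) then gives (1) $\Leftrightarrow$ (2).

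It remains to prove that $\psi_n$ is a weak equivalence, which is the heart of the matter. I would induct on $n$ using the spine recursion $\mathrm{Sp}(X)_n \simeq \mathrm{Sp}(X)_{n-1}\times^h_{X_0} X_1$ obtained by gluing the last edge to the front spine along the vertex $n-1$ --- that is, along the target operator $d_1$ of the last edge. Since this is exactly the operator controlled by (\ref{eq:rfibsquare}), I may substitute $X_1 \simeq X_0\times^h_{B_0} B_1$ and rearrange homotopy pullbacks to obtain $\mathrm{Sp}(X)_n \simeq \mathrm{Sp}(X)_{n-1}\times^h_{B_0} B_1$; the inductive hypothesis $\mathrm{Sp}(X)_{n-1}\simeq X_0\times^h_{B_0}\mathrm{Sp}(B)_{n-1}$ together with the reassembly $\mathrm{Sp}(B)_{n-1}\times^h_{B_0} B_1 \simeq \mathrm{Sp}(B)_n$ then yields the claim. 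The main obstacle, and the only genuinely delicate point, is the bookkeeping of face operators: the square (\ref{eq:rfibsquare}) directly controls only the target map $d_1$, whereas the source maps $d_0$ also appearing in the spine (for instance the last-vertex map out of the front spine) must be recognized as pulled back from $B$ via the naturality identity $p_0\circ d_0^X = d_0^B\circ p_1$ for $X\to B$. This same bookkeeping is what verifies that the square above genuinely commutes, completing the argument.
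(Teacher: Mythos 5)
Your argument is correct, and it reaches the result by a genuinely different organization than the paper's, even though the toolbox --- the spine decomposition of $\Delta[n]$ and the pasting/composition law for homotopy cartesian squares --- is the same. The paper proves the cycle $(1)\Rightarrow(2)\Rightarrow(3)\Rightarrow(1)$: for $(1)\Rightarrow(2)$ it factors the square through $X_{n-1}\times^h_{X_0}X_1$, using the Segal conditions on \emph{both} $X$ and $B$ to handle one half and commuting homotopy pullbacks plus induction for the other; for $(3)\Rightarrow(1)$ it reformulates the Segal condition for $X$ as the homotopy cartesianness of the square formed by $d_0$ and $d_n$ out of $X_n$, and verifies this by pasting two rectangles that map down to $B$. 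You instead isolate the single lemma that $\psi_n\colon \mathrm{Sp}(X)_n\to X_0\times^h_{B_0}\mathrm{Sp}(B)_n$ is a weak equivalence under the standing hypothesis alone (with no Segal condition imposed on $X$), proved by the spine recursion together with the substitution $X_1\simeq X_0\times^h_{B_0}B_1$ along the $d_1$-leg of the last edge --- exactly the leg controlled by (\ref{eq:rfibsquare}) --- and composition of homotopy pullbacks; the biconditional $(1)\Leftrightarrow(2)$ then drops out symmetrically from two-out-of-three in one commuting square, with no detour through $(3)$. What your packaging buys is that the two directions are handled by one induction instead of two, and the asymmetry in the paper's argument (where $(1)\Rightarrow(2)$ needs $X$ Segal but the key lemma should not) disappears. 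Your treatment of $(2)\Leftrightarrow(3)$ via the factorization $a_n=d^n\circ a_{n-1}$ is precisely the ``straightforward'' argument the paper leaves to the reader. The only delicate point is the one you flag yourself: the $d_0$-legs internal to the spine are not controlled by (\ref{eq:rfibsquare}) and must only ever be used through their composites to $B_0$, where naturality of $X\to B$ applies; your bookkeeping there is correct.
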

\begin{proof}
This is rather formal and it works for any model category which, like $\Sp$, is right proper. Suppose $(1)$ holds. We show $(2)$ holds by induction on $n$. The case $n=1$ holds by assumption. Suppose, for the induction hypothesis, that $(2)$ holds for $n-1$. Factor the square in $(2)$ as in the diagram below.
\[
	\begin{tikzpicture}[descr/.style={fill=white}] 
	\matrix(m)[matrix of math nodes, row sep=2.5em, column sep=2.5em, 
	text height=1.5ex, text depth=0.25ex] 
	{
	X_{0} & {X_{n-1}} {\times^h_{X_0}} X_1 & X_{n} \\
	B_{0} & {B_{n-1}} {\times^h_{B_0}} B_1 & B_{n} \\
	}; 
	\path[<-,font=\scriptsize] 
		(m-1-1) edge node [auto] {} (m-1-2);
	\path[<-,font=\scriptsize] 
		(m-2-1) edge node [auto] {} (m-2-2);
	\path[->,font=\scriptsize] 
		(m-1-1) edge node [left] {} (m-2-1);
	\path[->,font=\scriptsize] 		
		(m-1-2) edge node [auto] {} (m-2-2);
	\path[<-,font=\scriptsize] 
		(m-1-2) edge node [left] {} (m-1-3);
	\path[->,font=\scriptsize] 		
		(m-1-3) edge node [auto] {} (m-2-3);
	\path[<-,font=\scriptsize] 		
		(m-2-2) edge node [auto] {} (m-2-3);
	\end{tikzpicture} 
\]
(rows induced by $\Delta[0] \to \Delta[n-1] \amalg_{\Delta[0]}\Delta[1] \to \Delta[n]$; the first map is given by the inclusion $\alpha_1 = d^1 : \Delta[0] \to \Delta[1]$, the second is the Segal map)

 The right-hand square is homotopy cartesian since the horizontal maps are weak equivalences (using the assumption that $X$ and $B$ are Segal spaces). The left-hand square is also homotopy cartesian; this follows by commuting homotopy pullbacks, the induction hypothesis and the assumption that $X \to B$ is a right fibration.

 The equivalence between $(2)$ and $(3)$ is straightforward.

 It remains to show that $(3)$ implies $(1)$. We will verify the Segal condition by showing that the commutative square
\begin{equation}\label{eq:segalagain}
	\begin{tikzpicture}[descr/.style={fill=white}, baseline=(current bounding box.base)] ]
	\matrix(m)[matrix of math nodes, row sep=2.5em, column sep=2.5em, 
	text height=1.5ex, text depth=0.25ex] 
	{
	X_n & X_{n-1} \\
	X_{n-1} & X_{n-2} \\
	}; 
	\path[->,font=\scriptsize] 
		(m-1-1) edge node [auto] {$d_0$} (m-1-2);
	\path[->,font=\scriptsize] 
		(m-2-1) edge node [auto] {$d_{0}$} (m-2-2);
	\path[->,font=\scriptsize] 
		(m-1-1) edge node [left] {$d_n$} (m-2-1);
	\path[->,font=\scriptsize] 		
		(m-1-2) edge node [auto] {$d_{n-1}$} (m-2-2);
	\end{tikzpicture} 
\end{equation}
is homotopy cartesian, for each $n \ge 2$. It is not hard to deduce that this is an equivalent formulation of Segal's condition. Now prolong the above square to the diagram
\[
	\begin{tikzpicture}[descr/.style={fill=white}] 
	\matrix(m)[matrix of math nodes, row sep=2.5em, column sep=2.5em, 
	text height=1.5ex, text depth=0.25ex] 
	{
	 X_{n}  & {X_{n-1}} & {B_{n-1}}\\
	 X_{n-1} & {X_{n-2}} & {B_{n-2}} \\
	}; 
	\path[->,font=\scriptsize] 
		(m-1-1) edge node [auto] {$d_0$} (m-1-2);
	\path[->,font=\scriptsize] 
		(m-2-1) edge node [auto] {$d_{0}$} (m-2-2);
	\path[->,font=\scriptsize] 
		(m-1-1) edge node [left] {$d_n$} (m-2-1);
	\path[->,font=\scriptsize] 		
		(m-1-2) edge node [auto] {$d_{n-1}$} (m-2-2);
	\path[->,font=\scriptsize] 
		(m-1-2) edge node [left] {} (m-1-3);
	\path[->,font=\scriptsize] 		
		(m-1-3) edge node [auto] {$d_{n-1}$} (m-2-3);
	\path[->,font=\scriptsize] 		
		(m-2-2) edge node [auto] {} (m-2-3);
	\end{tikzpicture} 
\]
Here, the right-hand square is homotopy cartesian by assumption. So it remains to show that the outer rectangle is homotopy cartesian. That outer rectangle is identified with
the outer rectangle in the diagram:
\[
	\begin{tikzpicture}[descr/.style={fill=white}] 
	\matrix(m)[matrix of math nodes, row sep=2.5em, column sep=2.5em, 
	text height=1.5ex, text depth=0.25ex] 
	{
	 X_{n}  & {B_{n}} & {B_{n-1}}\\
	 X_{n-1} & {B_{n-1}} & {B_{n-2}} \\
	}; 
	\path[->,font=\scriptsize] 
		(m-1-1) edge node [auto] {} (m-1-2);
	\path[->,font=\scriptsize] 
		(m-2-1) edge node [auto] {} (m-2-2);
	\path[->,font=\scriptsize] 
		(m-1-1) edge node [left] {$d_n$} (m-2-1);
	\path[->,font=\scriptsize] 		
		(m-1-2) edge node [auto] {$d_n$} (m-2-2);
	\path[->,font=\scriptsize] 
		(m-1-2) edge node [auto] {$d_0$} (m-1-3);
	\path[->,font=\scriptsize] 		
		(m-1-3) edge node [auto] {$d_{n-1}$} (m-2-3);
	\path[->,font=\scriptsize] 		
		(m-2-2) edge node [auto] {$d_{0}$} (m-2-3);
	\end{tikzpicture} 
\]
But both squares in this diagram are homotopy cartesian, the left-hand square by assumption $(2)$ and the right-hand square since $B$ is a Segal space.
\end{proof}

\subsection{Homotopy theory of right fibrations}

We begin by recalling the Segal space model structure on the category of simplicial spaces. There is, throughout the paper, an underlying model structure on the category of simplicial spaces with \emph{levelwise} weak equivalences from which all the other model structures are obtained by localization. There are at least two choices for this model structure, depending on whether we ask the cofibrations or the fibrations to be given levelwise. It is well known that the identity map gives a Quillen equivalence between the two model structures (usually called injective and projective model structure, respectively). Because of this, we refer to either generically as the \emph{levelwise} model structure. So, whenever we say that a map is a weak equivalence, fibration or cofibration of \emph{simplicial spaces} or \emph{in the levelwise model structure}, we mean that in either of the two senses above (but the meaning should of course be fixed, at least locally).

\begin{prop}\label{prop:rfibmodel}(Rezk)
There is a left proper, simplicial model structure structure on $\SSp$ -- called the \textbf{Segal space model structure} -- uniquely characterized by the following properties:
\begin{itemize}
\item An object $X$ is fibrant if it is a Segal space (and is fibrant as a simplicial space).
\item A morphism $X \to Y$ between (fibrant) Segal spaces is a weak equivalence if $X_n \to Y_n$ is a weak equivalence for each $n \geq 0$.
\item A morphism is a cofibration if it is a cofibration of simplicial spaces.
\end{itemize}
More generally, a morphism $f : X \to Y$ between any two simplicial spaces is a weak equivalence if the induced map on derived mapping spaces
\[
\RR \map(f, Z) : \RR \map(Y, Z) \to \RR \map(X, Z)
\]
(computed with respect to the levelwise model structure) is a weak equivalence of spaces for every Segal space $Z$.
\end{prop}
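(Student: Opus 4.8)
The plan is to realize the Segal space model structure as a left Bousfield localization of the levelwise model structure on $\SSp$ at a suitable set of maps, and then to read off each of the claimed properties from the standard theory of such localizations (Smith, Hirschhorn). The starting point is that the levelwise model structure --- in either its injective or projective incarnation --- is left proper, simplicial and combinatorial (equivalently, cellular), so the localization machinery applies and produces a left proper simplicial model structure.

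The set of maps to invert is the collection of \emph{spine inclusions}. For each $n \ge 2$, let
\[
G(n) : \Delta[1] \amalg_{\Delta[0]} \cdots \amalg_{\Delta[0]} \Delta[1] \hookrightarrow \Delta[n]
\]
be the inclusion of the spine --- the $n$-fold gluing of copies of $\Delta[1]$ along shared endpoints, assembled from the edges $\alpha_0, \dots, \alpha_{n-1}$ --- into $\Delta[n]$, where as in the Notation we regard simplicial sets as discrete simplicial spaces. Set $S = \{ G(n) \}_{n \ge 2}$ and form the localization $L_S \SSp$. A levelwise-fibrant $X$ is $S$-local precisely when, for every $n$, the map
\[
\RR\map(\Delta[n], X) \to \RR\map\Big(\Delta[1] \amalg_{\Delta[0]} \cdots \amalg_{\Delta[0]} \Delta[1],\, X\Big)
\]
is a weak equivalence. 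The source is $X_n$, while the target, since mapping out of a homotopy colimit of representables yields the corresponding homotopy limit, is exactly the homotopy limit of (\ref{eq:segalmaps}). Thus the $S$-local objects are precisely the levelwise-fibrant Segal spaces, which is the first bullet.

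With the localization in hand the remaining clauses are formal. Cofibrations are untouched by a left Bousfield localization, giving the third bullet, and left properness together with the simplicial structure are inherited. The general theory identifies the weak equivalences of $L_S \SSp$ with the $S$-local equivalences, i.e. the maps $f$ for which $\RR\map(f, Z)$ is an equivalence for every $S$-local $Z$; since the $S$-local objects are exactly the Segal spaces, this is verbatim the characterization in the final display of the statement. For the second bullet, a map between $S$-local objects is an $S$-local equivalence if and only if it is an underlying levelwise weak equivalence (Hirschhorn's criterion for maps between local objects). Uniqueness follows from the standard fact that a model structure is determined by its cofibrations together with its fibrant objects, both of which we have pinned down; this also shows the structure is the one Rezk constructs in \cite{Rezk}.

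The main obstacle is the identification of $\RR\map$ out of the spine with the homotopy limit appearing in the Segal condition. Concretely one must verify that the spine is the homotopy colimit of the diagram of representables built from the $\alpha_i$ --- equivalently, that the defining pushouts along $\Delta[0] \to \Delta[1]$ are homotopy pushouts in the levelwise structure, which holds because those attaching maps are cofibrations --- and then invoke that $\RR\map(-, X)$ carries this homotopy colimit to the homotopy limit $\holim(X_1 \xrightarrow{d_0} X_0 \xleftarrow{d_1} \cdots \xleftarrow{d_1} X_1)$ with the correct face maps. Once this matching is secured, every other assertion is a direct application of the localization formalism.
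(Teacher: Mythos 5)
Your proposal is correct and is precisely the construction the paper relies on: the paper states this proposition without proof, citing Rezk, and the subsequent remark (where the spine inclusions $(\alpha_0,\dots,\alpha_{n-1})\colon \Delta[1]\amalg_{\Delta[0]}\cdots\amalg_{\Delta[0]}\Delta[1]\hookrightarrow\Delta[n]$ are declared weak equivalences ``by definition of the Segal space model structure'') confirms that the intended definition is exactly your left Bousfield localization at the spine inclusions. Your identification of the local objects via $\RR\map(\Delta[n],X)\simeq X_n$ and $\RR\map(\mathrm{spine},X)\simeq\holim(X_1\to X_0\gets\cdots)$, together with the standard localization formalism and the fact that cofibrations plus fibrant objects determine the model structure, is the same argument Rezk gives.
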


\begin{rem}(Alternative characterization of the Segal condition)
For each $n \geq 2$, the map
\[
(\alpha_0, \dots, \alpha_{n-1}) : \Delta[1] \amalg_{\Delta[0]} \dots \amalg_{\Delta[0]} \Delta[1] \hookrightarrow \Delta[n]
\]
is a weak equivalence by definition of the Segal space model structure. Here is a folklore statement that resonates with the definition of a quasi-category. For $n \geq 2$, let $\iota: \Lambda^k[n] \hookrightarrow \Delta[n]$ be an inner horn inclusion, i.e. $0 < k < n$. Then $\iota$ is a weak equivalence in the Segal space model structure. One can show this inductively on $n$. The case $n  = 2$ is already part of the Segal condition. For higher $n$, factor the map $(\alpha_0, \dots, \alpha_{n-1})$ as
\[
\Delta[1] \amalg_{\Delta[0]} \dots \amalg_{\Delta[0]} \Delta[1]  \hookrightarrow \Lambda^k[n] \to \Delta[n]
\]
The left-hand map can be given as a composition of inner horn extensions, i.e. pushouts along inner horn inclusions. These inner horn inclusions are all of \emph{lower} dimension and so are weak equivalences by induction. It follows that each extension is a pushout along a weak equivalence (which is also a cofibration in the injective structure) and so a weak equivalence. By two-out-of-three, we conclude that the right-hand map in the display is a weak equivalence.
\end{rem}

\begin{prop}\label{prop:rfibmodel}
Fix a Segal space $B$. There is a left proper, simplicial model structure structure on $\SSp_{/B}$ -- called the \textbf{right fibration model structure} -- uniquely characterized by the following properties:
\begin{itemize}
\item An object $X \to B$ is fibrant if it is a right fibration and a fibration of simplicial spaces.
\item A morphism $X \to Y$ between right fibrations over $B$ is a weak equivalence if $X_0 \to Y_0$ is a weak equivalence, i.e. if the maps of homotopy fibers $X_b \to Y_b$ are weak equivalences for each $b \in B_0$.
\item A morphism is a cofibration if it is a cofibration of simplicial spaces.
\end{itemize}
More generally, a morphism $f : X \to Y$ between any two simplicial spaces over $B$ is a weak equivalence if the induced map on derived mapping spaces
\[
\RR \map_B(f, W) : \RR \map_B(Y, W) \to \RR \map_B(X, W)
\]
(computed with respect to the levelwise model structure) is a weak equivalence of spaces for every fibrant object $W \to B$.
\end{prop}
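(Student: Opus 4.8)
The plan is to realize the desired model structure as a left Bousfield localization of the levelwise model structure on the slice category $\SSp_{/B}$. Since $\SSp$ is (equivalent to) the category of bisimplicial sets, it is a presheaf category, so the levelwise (say injective) model structure is left proper, combinatorial and simplicial, and all three properties are inherited by the slice $\SSp_{/B}$. For such model categories the left Bousfield localization at a \emph{set} of maps always exists and is again left proper, combinatorial and simplicial, with the same cofibrations as the ambient structure (so: cofibrations of simplicial spaces); its fibrant objects are exactly the ambient fibrant objects that are local with respect to the chosen set, and its weak equivalences are the maps inducing equivalences on derived mapping spaces into every local object. It therefore suffices to exhibit a set $S$ whose local objects are precisely the right fibrations, after which the last (``more generally'') clause of the statement is literally the localization's definition of weak equivalence, and uniqueness follows since cofibrations together with weak equivalences determine a model structure.

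First I would fix $S$. For each $n \geq 1$ and each vertex $\sigma \in (B_n)_0$, i.e. each map $\sigma \colon \Delta[n] \to B$, precomposition with $a_n \colon [0] \to [n]$ gives a map of objects over $B$
\[
a_n^\sigma \colon (\Delta[0], \sigma a_n) \longrightarrow (\Delta[n], \sigma),
\]
and I let $S$ be the set of all of these. For a levelwise fibrant $W \to B$ the derived mapping space $\RR\map_B((\Delta[n], \sigma), W)$ is the homotopy fiber of $W_n \to B_n$ over $\sigma$, and similarly at level $0$; hence $W$ is $S$-local precisely when, for every $n$ and every $\sigma$, the map of homotopy fibers $\mathrm{fib}_\sigma(W_n \to B_n) \to \mathrm{fib}_{\sigma a_n}(W_0 \to B_0)$ is a weak equivalence. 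Ranging over all vertices of each $B_n$, this says exactly that the square in item $(2)$ of Proposition \ref{prop:rightfibequiv} is homotopy cartesian for every $n \geq 1$. Since the case $n = 1$ is the defining square \eqref{eq:rfibsquare} and $B$ is a Segal space by hypothesis, Proposition \ref{prop:rightfibequiv} identifies the $S$-local (levelwise fibrant) objects with the right fibrations, which gives the stated description of fibrant objects.

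It remains to pin down the weak equivalences between fibrant objects, and this is the one step with genuine content. In any left Bousfield localization a map between fibrant objects is a weak equivalence if and only if it is a levelwise weak equivalence, so I must show that a map $f \colon X \to Y$ of right fibrations over $B$ is a levelwise equivalence as soon as $X_0 \to Y_0$ is. Here I use that both $X$ and $Y$ satisfy condition $(2)$: the homotopy cartesian squares exhibit $X_n \simeq X_0 \times^h_{B_0} B_n$ and $Y_n \simeq Y_0 \times^h_{B_0} B_n$ compatibly with $f$, so that $X_n \to Y_n$ is identified with the map $X_0 \times^h_{B_0} B_n \to Y_0 \times^h_{B_0} B_n$ induced by $X_0 \to Y_0$ over the common map $B_n \to B_0$. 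As homotopy pullbacks preserve weak equivalences, $X_n \to Y_n$ is an equivalence for every $n$; the converse is trivial. The reformulation in terms of the fibers $X_b \to Y_b$ over each $b \in B_0$ is then the standard fact that a map over a base is a weak equivalence if and only if it is so on homotopy fibers over every point.

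The formal scaffolding -- existence of the localization, left properness, combinatoriality and the simplicial enrichment -- is entirely standard once $S$ is in hand, so I expect the real obstacles to be the two compatibility computations above: that $S$-locality reproduces exactly the right-fibration condition (via Proposition \ref{prop:rightfibequiv}, and getting the fiberwise bookkeeping in $S$ correct), and that levelwise equivalences between right fibrations are detected on object spaces alone. Everything else is an application of the localization package.
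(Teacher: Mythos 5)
Your proposal is correct, and it shares the paper's basic strategy---obtain the right fibration model structure as a left Bousfield localization of a slice model structure---but implements it differently. The paper slices the \emph{Segal space} model structure over $B$ and localizes only at the maps $\Delta[0]\xrightarrow{d^1}\Delta[1]\xrightarrow{f}B$ for $f\in B_1$: since fibrant objects of that slice are already Segal spaces equipped with a levelwise fibration to $B$, locality at these $n=1$ maps is literally the defining square (\ref{eq:rfibsquare}), and Proposition \ref{prop:rightfibequiv} is not needed to identify the fibrant objects. You instead localize the levelwise (injective) slice structure at the larger family $\{\Delta[0]\xrightarrow{a_n}\Delta[n]\to B\}_{n\geq 1}$ and invoke Proposition \ref{prop:rightfibequiv} to show that locality forces the Segal condition on $X$; this buys independence from the Segal space model structure at the cost of a bigger localizing set and an essential reliance on that proposition. (Compare Remark \ref{rem:fibcss}, where the paper notes that localizing a different ambient structure yields the same model category.) A genuine merit of your write-up is that you actually verify the two substantive bullet points that the paper's one-line proof leaves implicit: the fiberwise computation of $\RR\map_B(\Delta[n],W)$ identifying $S$-local objects with right fibrations, and the fact that a map of right fibrations is a levelwise equivalence as soon as it is one on $0$-simplices, via the identification $X_n\simeq X_0\times^h_{B_0}B_n$. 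Both routes are sound and produce the same model structure.
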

\begin{proof} Start with the Segal space model structure on simplicial spaces. Take the overcategory model structure. Note that a fibrant object is a fibration in the Segal space model structure (which, since $B$ is fibrant there, is equivalent to $X$ being Segal and the map being a levelwise fibration). Then take the left Bousfield localization with respect to the set of maps $$\{ \LL \Delta[0] \xrightarrow{\LL d^1} \LL \Delta[1] \xrightarrow{f} B : f \in B_1\}$$
where $\LL d^1$ refers to a replacement of $d^1$ by a cofibration in the levelwise model structure (note that no replacement is needed for the injective model structure.)
\end{proof}

\begin{prop}\label{prop:rfibbase}
Right fibrations are stable under base change.
\end{prop}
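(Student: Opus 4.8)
The plan is to read ``base change'' as homotopy pullback along a map $g : B' \to B$ of Segal spaces: given a right fibration $p : X \to B$, I want to show that $q : Y \to B'$ is again a right fibration, where $Y := B' \times^h_B X$ is the levelwise homotopy pullback. First I would reduce to an honest pullback by replacing $p$ with a levelwise fibration between Segal spaces (which does not alter its class in the right fibration homotopy theory), so that $Y_n \simeq B'_n \times^h_{B_n} X_n$ at every level and $Y$ is computed strictly.

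There are then two things to check, matching the two clauses in the definition of a right fibration. The first is that $Y$ is a Segal space. For this I would argue that the homotopy pullback of the cospan $B' \xrightarrow{g} B \xleftarrow{p} X$ of Segal spaces is again a Segal space: all three objects are fibrant in the Segal space model structure, which is right proper, so the homotopy pullback of fibrant objects is fibrant, hence a Segal space. (Equivalently, the Segal maps \eqref{eq:segalmaps} are themselves homotopy-limit conditions, and the levelwise homotopy pullback commutes with those homotopy limits, so the Segal map of $Y$ is the homotopy pullback of the Segal maps of $B'$, $B$ and $X$, each of which is an equivalence.)

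The second, and the only real point, is that the defining square \eqref{eq:rfibsquare} for $q$ is homotopy cartesian, i.e. that the canonical map $Y_1 \to Y_0 \times^h_{B'_0} B'_1$ is a weak equivalence. I would prove this by cancelling homotopy pullbacks twice. On one side, cancelling the factor $B'_0$ gives $Y_0 \times^h_{B'_0} B'_1 \simeq (X_0 \times^h_{B_0} B'_0) \times^h_{B'_0} B'_1 \simeq X_0 \times^h_{B_0} B'_1$. On the other side, feeding in the hypothesis that \eqref{eq:rfibsquare} is homotopy cartesian for $p$, i.e. $X_1 \simeq X_0 \times^h_{B_0} B_1$, and cancelling $B_1$ gives $Y_1 \simeq X_1 \times^h_{B_1} B'_1 \simeq (X_0 \times^h_{B_0} B_1) \times^h_{B_1} B'_1 \simeq X_0 \times^h_{B_0} B'_1$. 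Both expressions are $X_0 \times^h_{B_0} B'_1$, and the identification is compatible with the structure maps precisely because $g$ is a simplicial map: the two resulting maps $B'_1 \to B_0$, namely $g_0 \circ d_1$ and $d_1 \circ g_1$, agree by naturality of $g$ with respect to $d_1$. Hence $Y_1 \simeq Y_0 \times^h_{B'_0} B'_1$, and $q$ is a right fibration.

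The main obstacle I anticipate is purely bookkeeping: keeping track of which structure maps enter each homotopy pullback so that the two cancellations land on the \emph{same} object $X_0 \times^h_{B_0} B'_1$ over $B_0$. Once that is arranged, the naturality square of $g$ closes the argument. Alternatively, one could phrase the whole computation as a single cube-pasting lemma (the front face being homotopy cartesian given that the two side faces and the back face are), or verify condition $(2)$ of Proposition \ref{prop:rightfibequiv} levelwise by the same cancellation; I would use whichever reads more cleanly in context.
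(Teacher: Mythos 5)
Your proof is correct and uses essentially the same mechanism as the paper's: the paper pastes the square $(X'_1,X'_0,B'_1,B'_0)$ against the square $(X'_0,X_0,B'_0,B_0)$ and identifies the resulting rectangle with the composite of the levelwise-pullback square $(X'_1,X_1,B'_1,B_1)$ and the right-fibration square (\ref{eq:rfibsquare}) for $p$, which is exactly your two-cancellation computation landing on $X_0\times^h_{B_0}B'_1$ from both sides; you additionally verify that $Y$ is a Segal space, which the paper's proof leaves implicit. The one quibble is your appeal to right properness of the \emph{Segal space} model structure (a left Bousfield localization, for which right properness is not automatic) --- but this is harmless, both because your parenthetical argument that the Segal maps are homotopy-limit conditions commuting with the levelwise homotopy pullback is the correct justification, and because one can instead note that the strict pullback of a Segal-space fibration (in particular a levelwise fibration) along a map of fibrant objects is again fibrant and computes the levelwise homotopy pullback.
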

\begin{proof}
Suppose $X \to B$ is a right fibration and $B^{\prime} \to B$ a map. Let $X^\prime$ denote the homotopy pullback $X \times^h_B B^\prime$. We have a commutative diagram
\begin{equation*}
	\begin{tikzpicture}[descr/.style={fill=white}, baseline=(current bounding box.base)] ]
	\matrix(m)[matrix of math nodes, row sep=2.5em, column sep=2.5em, 
	text height=1.5ex, text depth=0.25ex] 
	{
	X^\prime_1 & X^\prime_0 & X_0\\
	B^\prime_1 & B^\prime_0 & B_0\\
	}; 
	\path[->,font=\scriptsize] 
		(m-1-1) edge node [auto] {$d_1$} (m-1-2);
	\path[->,font=\scriptsize] 
		(m-2-1) edge node [auto] {$d_1$} (m-2-2);
	\path[->,font=\scriptsize] 
		(m-1-2) edge node [left] {} (m-1-3);
	\path[->,font=\scriptsize] 		
		(m-2-2) edge node [auto] {} (m-2-3);
	\path[->,font=\scriptsize] 		
		(m-1-3) edge node [auto] {} (m-2-3);
	\path[->,font=\scriptsize] 
		(m-1-1) edge node [left] {} (m-2-1);
	\path[->,font=\scriptsize] 		
		(m-1-2) edge node [auto] {} (m-2-2);
	\end{tikzpicture} 
\end{equation*} 
The right-hand square is homotopy cartesian by assumption. The outer rectangle is also homotopy cartesian, since it is also given as the composition of two different homotopy cartesian squares. So the left-hand square is homotopy cartesian.
\end{proof}

\begin{prop}
Suppose $f : X \to Y$ and $g : Y \to Z$ are maps between Segal spaces. 
\begin{enumerate}
\item $f$ and $g$ are right fibrations $\Rightarrow$ $gf$ is a right fibration
\item $g$ and $gf$ are right fibrations $\Rightarrow$ $f$ is a right fibration
\item $f$ and $gf$ are right fibrations and $f$ induces a surjection $\pi_0 X_0 \to \pi_0 Y_0$ $\Rightarrow$ $g$ is a right fibration.
\end{enumerate}
\end{prop}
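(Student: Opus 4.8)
The plan is to recognize each of the three conditions as the homotopy-cartesianness of the square (\ref{eq:rfibsquare}) attached to the relevant map, and then to exploit the one structural fact that organizes everything: the square attached to $gf$ is exactly the vertical pasting of the square attached to $f$ on top of the square attached to $g$, the common middle row being $Y_0 \xleftarrow{d_1} Y_1$ with vertical maps $f$ and $g$. Since all of this takes place in $\Sp$, which is right proper, the pasting law for homotopy cartesian squares is available throughout.

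Parts $(1)$ and $(2)$ are then immediate. For $(1)$, the squares of $f$ and of $g$ are homotopy cartesian, so by pasting the outer rectangle, which is the square of $gf$, is homotopy cartesian; as $X$ and $Z$ are Segal spaces, $gf$ is a right fibration. For $(2)$, the square of $g$ (the bottom square) and the square of $gf$ (the outer rectangle) are homotopy cartesian, so by the pasting law the square of $f$ (the top square) is homotopy cartesian; as $X$ and $Y$ are Segal spaces, $f$ is a right fibration.

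The content is in $(3)$, where the pasting law would have to be applied in its unfavorable direction: knowing that the top square and the outer rectangle are homotopy cartesian does not formally force the bottom square to be so. I would instead argue fiberwise. The square (\ref{eq:rfibsquare}) attached to $g$ is homotopy cartesian precisely when, for every $y \in Y_0$ with image $z = g_0(y)$, the induced comparison of horizontal homotopy fibers of the respective target maps $d_1$,
\[
\mathrm{hofib}_{y}(d_1\colon Y_1 \to Y_0) \longrightarrow \mathrm{hofib}_{z}(d_1\colon Z_1 \to Z_0),
\]
is a weak equivalence; moreover this comparison depends up to equivalence only on the class of $y$ in $\pi_0 Y_0$, so it suffices to test one representative of each component. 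Now fix $x \in X_0$ and put $y = f_0(x)$, $z = g_0(y)$. The homotopy-cartesianness of the squares of $f$ and of $gf$ says exactly that, in the commuting triangle
\[
\mathrm{hofib}_{x}(d_1) \longrightarrow \mathrm{hofib}_{y}(d_1) \longrightarrow \mathrm{hofib}_{z}(d_1),
\]
the first arrow and the composite are weak equivalences; hence by two-out-of-three the second arrow, which is the comparison for $g$ at the point $y$, is a weak equivalence.

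Thus the comparison for $g$ is an equivalence at every $y$ lying in the image of $f_0$. The hypothesis that $\pi_0 X_0 \to \pi_0 Y_0$ is surjective guarantees that each component of $Y_0$ contains such a point, and by the component-invariance noted above the comparison is therefore an equivalence at all $y \in Y_0$; so the square of $g$ is homotopy cartesian and, $Y$ and $Z$ being Segal spaces, $g$ is a right fibration. The one genuine obstacle is exactly this last point: the pasting law fails in the direction needed for $(3)$, and the surjectivity assumption is precisely what promotes the fiberwise equivalence from the image of $f_0$ to all of $Y_0$, using that homotopy-cartesianness of a square may be checked over a single representative in each component of the base.
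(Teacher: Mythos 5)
Your proof is correct. For parts $(1)$ and $(2)$ you use exactly the observation that makes them ``straightforward'' in the paper's one-line treatment: the square (\ref{eq:rfibsquare}) for $gf$ is the vertical pasting of the squares for $f$ and $g$, and the pasting/cancellation law for homotopy cartesian squares does the rest. For part $(3)$ the paper gives no argument at all, deferring to an external lemma (lemma B.9 of a companion paper); you instead supply a complete, self-contained proof by translating homotopy-cartesianness of (\ref{eq:rfibsquare}) into the fiberwise criterion --- for each $y \in Y_0$ the comparison $\mathrm{hofib}_y(d_1\colon Y_1 \to Y_0) \to \mathrm{hofib}_{g(y)}(d_1\colon Z_1 \to Z_0)$ must be an equivalence --- and then running two-out-of-three on the composite of fiber comparisons for points in the image of $f_0$, with the $\pi_0$-surjectivity hypothesis and the component-invariance of homotopy fibers promoting this to all of $Y_0$. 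This is almost certainly the content of the cited lemma, and your version correctly identifies both why the naive pasting argument fails in this direction and exactly where the surjectivity hypothesis enters. The one point worth stating explicitly if you write this up is the fiberwise criterion itself (a square of spaces is homotopy cartesian iff the induced maps on homotopy fibers of the parallel arrows, over every point of the appropriate corner, are equivalences), since that is the lemma carrying the whole of part $(3)$.
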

\begin{proof}
$(1)$ and $(2)$ are straightforward. $(3)$ follows from \cite[lemma B.9]{paper2}.
\end{proof}

\begin{rem} Proposition \ref{prop:rightfibequiv} can be rephrased as follows. For every string of morphisms $b = (b_0 \gets \dots \gets b_{n}) \in B_n$, the maps 
\[
\Delta[0] \xrightarrow{\alpha_n} \Delta[n] \xrightarrow{b} B
\quad \quad \textup{ and } \quad \quad
\Delta[n-1] \xrightarrow{d^n} \Delta[n] \xrightarrow{b} B
\]
are weak equivalences in the right fibration model structure.
\end{rem}

\subsection{Right fibrations are fibrations in Rezk's model structure of complete Segal spaces}\label{sec:fibcss}

\begin{defn}
A simplicial space $X$ is \textbf{complete} if either (and therefore all) of the maps $d_0, d_1, s_0$ induce a weak equivalence $X_1^{\he} \simeq X_0$.
\end{defn}

\begin{rem}\label{rem:E}
The following reformulation of the completeness condition is useful. Let $E$ denote the (nerve of the) groupoid with two objects $x,y$ and exactly two (non-identity) isomorphisms $x \to y$ and $y \to x$. A simplicial space $X$ is complete if and only if the map
\[
\RR \map(E,X) \to \RR \map(\Delta[0], X)
\]
induced by any of the (two) maps $\Delta[0] \to E$, is a weak equivalence. The \emph{complete} Segal space model structure is constructed by localizing the Segal space model structure with respect to these maps. The weak equivalences between Segal spaces (not necessarily complete) become the Dwyer-Kan equivalences.
\end{rem}

\begin{defn}
A simplicial space $X$ over $B$ is \textbf{fiberwise complete} if the square
\[
	\begin{tikzpicture}[descr/.style={fill=white}] 
	\matrix(m)[matrix of math nodes, row sep=2.5em, column sep=2.5em, 
	text height=1.5ex, text depth=0.25ex] 
	{
	X_0 & X_1^{\he} \\
	B_0 & B_1^{\he}\\
	}; 
	\path[->,font=\scriptsize] 
		(m-1-2) edge node [above] {$d_1$} (m-1-1);
	\path[->,font=\scriptsize] 
		(m-2-2) edge node [above] {$d_1$} (m-2-1);
	\path[->,font=\scriptsize] 
		(m-1-1) edge node [left] {} (m-2-1);
	\path[->,font=\scriptsize] 		
		(m-1-2) edge node [auto] {} (m-2-2);
	\end{tikzpicture} 
\]
is homotopy cartesian. 
\end{defn}

\begin{rem}
A levelwise fibration $p : X \to B$ is a fiberwise complete Segal space if and only if every diagram
\begin{equation}\label{eq:fc}
	\begin{tikzpicture}[descr/.style={fill=white}, baseline=(current bounding box.base)] ]
	\matrix(m)[matrix of math nodes, row sep=2.5em, column sep=2.5em, 
	text height=1.5ex, text depth=0.25ex] 
	{
	\Delta[0] & X \\
	E & B\\
	}; 
	\path[->,font=\scriptsize] 
		(m-1-1) edge node [auto] {} (m-1-2);
	\path[->,font=\scriptsize] 
		(m-2-1) edge node [auto] {} (m-2-2);
	\path[->,font=\scriptsize] 
		(m-1-1) edge node [left] {target} (m-2-1);
	\path[->,font=\scriptsize] 		
		(m-1-2) edge node [auto] {} (m-2-2);
	\end{tikzpicture} 
\end{equation}
has a contractible space of lifts for each pair of horizontal maps. More precisely, for each $x \in B_1^{\he}$, the map
\begin{equation}\label{eq:fcmap}
\RR \map_{B}(f, X) :  \RR \map_{B}(E, X) \xrightarrow{} \RR \map_{B}(\Delta[0], X)
\end{equation}
induced by 
$
\Delta[0] \rightarrow E \xrightarrow{x} B
$
is a weak equivalence of spaces.
\end{rem}

\begin{prop}\label{prop:fibcss}
Let $B$ be a Segal space. A map $p: X \to B$ is a fibration in the complete Segal space model structure if and only if $p$ is fiberwise complete and a levelwise fibration. In particular, a map which is a right fibration and a levelwise fibration is a fibration in the complete Segal space model structure.
\end{prop}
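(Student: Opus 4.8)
The plan is to read both statements off the standard description of the fibrations of a left Bousfield localization, feeding in the reformulation of fiberwise completeness from the preceding Remark. Recall that the complete Segal space model structure is the localization of the Segal space model structure at the single map $\Delta[0] \to E$. The tool I will use is the following standard fact about a localization $L_S M$ of a left proper simplicial model category $M$: for a map $p : X \to B$ between objects that are fibrant \emph{in $M$} (but not necessarily in $L_S M$), $p$ is a fibration in $L_S M$ if and only if it is a fibration in $M$ and, for each localizing map, the corresponding square of derived mapping spaces is a homotopy pullback. Applied with $M$ the Segal space model structure and the localizing map $\Delta[0] \to E$, this says that $p$ is a fibration in the complete Segal space structure if and only if it is a fibration in the Segal space structure and
\[
\begin{CD}
\RR\map(E, X) @>>> \RR\map(\Delta[0], X) \\
@VVV @VVV \\
\RR\map(E, B) @>>> \RR\map(\Delta[0], B)
\end{CD}
\]
is a homotopy pullback (all mapping spaces computed levelwise, with which the Segal mapping spaces agree on fibrant objects). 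Taking vertical homotopy fibres over a point $x \in \RR\map(E,B) \simeq B_1^{\he}$ converts this into the statement that (\ref{eq:fcmap}) is a weak equivalence for every $x \in B_1^{\he}$, which by the Remark is precisely fiberwise completeness of the levelwise fibration $p$.

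The remaining work in the first statement is bookkeeping about which structure each fibrancy is taken in. Here $B$ is fibrant as a Segal space (that is, also fibrant as a simplicial space), and crucially we do \emph{not} assume it complete; this is exactly why the characterization above, with its $M$-fibrant rather than $L_S M$-fibrant target, is the right tool. For the forward implication, a fibration in the complete Segal space structure is a fortiori a Segal space fibration, hence a levelwise fibration, and its source is again a fibrant Segal space (since $X \to B \to \ast$ is a composite of Segal fibrations); the characterization then delivers fiberwise completeness. For the converse, the hypothesis that $p$ is fiberwise complete already presupposes that $X$ is a Segal space, so a levelwise fibration between fibrant Segal spaces is automatically a fibration in the Segal space structure; the mapping-space condition holds because $p$ is fiberwise complete, and the characterization returns a fibration in the complete Segal space structure.

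For the final (``in particular'') assertion it suffices to show that a right fibration $p : X \to B$ is fiberwise complete. Restricting the homotopy cartesian square (\ref{eq:rfibsquare}) along the component inclusions $X_1^{\he} \hookrightarrow X_1$ and $B_1^{\he} \hookrightarrow B_1$, and using that a homotopy pullback is preserved by passing to a union of components in one corner, the fiberwise completeness square is homotopy cartesian provided $X_1^{\he}$ is exactly the part of $X_1$ lying over $B_1^{\he}$. Thus everything reduces to: a morphism $f$ of $X$ is a homotopy equivalence if and only if $p(f)$ is one. One direction is immediate, as $p$ induces a functor $\mathsf{Ho}(X) \to \mathsf{Ho}(B)$ and functors preserve isomorphisms. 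For the converse I would pull $p$ back along the map $E \to B$ classifying the equivalence $p(f)$; by Proposition \ref{prop:rfibbase} this is again a right fibration, now over the groupoid $E$, and $f$ lifts to a $1$-simplex of it. Every $1$-simplex of a right fibration over $E$ is a homotopy equivalence, checked directly from the target-lifting property encoded in (\ref{eq:rfibsquare}) by lifting the inverse arrow of $E$ with prescribed target and using uniqueness of lifts over identities; hence $f$ is a homotopy equivalence in $X$.

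The main obstacle is precisely this last point — that in a right fibration a morphism over an equivalence is an equivalence. The reduction to right fibrations over $E$ makes it tractable, but the verification there still needs care in matching the source of the lifted inverse to the correct object of the fibre. The other point demanding attention, more bookkeeping than obstacle, is that the base is only Segal-fibrant and possibly incomplete, which is what forces the homotopy-pullback (mapping-space) formulation of the localization-fibration characterization instead of the simpler one available when the target is fibrant in the localized structure.
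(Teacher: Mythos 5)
Your argument for the second, ``in particular'' assertion is essentially the paper's own: both come down to showing that a right fibration is conservative and that a homotopy inverse downstairs lifts to a homotopy inverse upstairs; your detour through the pullback over $E$ (legitimate by proposition \ref{prop:rfibbase}) is a repackaging of the same two facts, at the same level of detail as the paper's sketch. The gap is in the first assertion, which is exactly where the content lies --- the paper does not prove it either, but outsources it to \cite[Appendix B]{paper2}. The ``standard fact'' you invoke, that for $p : X \to B$ between $M$-fibrant objects $p$ is a fibration in $L_S M$ if and only if it is a fibration in $M$ and the squares of derived mapping spaces at the localizing maps are homotopy pullbacks, is false in that generality. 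Only the ``only if'' direction is formal (it is the pullback-power axiom for the simplicial model category $L_S M$, exactly as in your forward implication). For the converse, take $M = \Sp$, $S = \{S^1 \to \ast\}$ (so that $L_S M$ is $0$-truncation), and let $p$ be a Kan fibration modelling the connected double cover of the circle. The relevant square
\[
\begin{CD}
S^1 @>>> \RR\map(S^1,S^1)\\
@VpVV @VV{\RR\map(S^1,p)}V\\
S^1 @>>> \RR\map(S^1,S^1)
\end{CD}
\]
(horizontal maps the inclusions of constant loops) is a homotopy pullback: over the constant loops only the degree-zero component of the free loop space of the source appears, and there $\RR\map(S^1,p)$ restricts to the degree-two map, whose homotopy fibre (two points) is carried isomorphically to the fibre of $p$. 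Yet $p$ is not a fibration in $L_S M$: the basepoint inclusion $\ast \to S^1$ is a cofibration and an $S$-local equivalence, and lifting against it with the identity on the base would produce a section of the double cover. The moral is that the right lifting property against the localizing maps (pushout-producted with cells) does not imply the right lifting property against all $L_S M$-trivial cofibrations once the codomain fails to be $S$-local --- and a non-complete Segal base $B$ is precisely a non-local codomain.

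So your proof of the hard direction of the first assertion --- fiberwise complete levelwise fibration implies fibration in the complete Segal space model structure --- does not go through as written. It is true, but it needs an argument specific to the localization of the Segal space model structure at $\Delta[0] \to E$ (this is what \cite[Appendix B]{paper2} supplies); no general localization principle with a merely $M$-fibrant target will do. The forward implication of the first assertion and your treatment of the second assertion are fine.
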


\begin{proof}
The first assertion is shown in \cite[Appendix B]{paper2}. To prove the second assertion, we suppose $X \to B$ is a right fibration (and a levelwise fibration) and will show that $X$ is fiberwise complete over $B$. Right fibrations are conservative; that is, if a morphism $f$ in $X$ is mapped to an identity, then $f$ is an identity. So if $g : b^\prime \to b$ and $h : b \to b^\prime$ are homotopy inverse in $B$ and $x \in X_0$ mapping to $b$, then by the right fibration condition these lift uniquely to a pair of morphisms $\overline{g}$ and $\overline{h}$ in $X$, which by conservativity must be homotopy inverse.
\end{proof}

\begin{cor}
If $X \to B$ is a right fibration and $B$ is a complete Segal space, then $X$ is a complete Segal space.
\end{cor}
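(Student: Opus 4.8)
The plan is to observe that $X$ is already a Segal space (this is built into the definition of a right fibration), so the only thing to verify is the completeness condition, namely that $d_1 \colon X_1^{\he} \to X_0$ is a weak equivalence. I would deduce this directly from the completeness of the base $B$ together with the \emph{fiberwise} completeness of the right fibration $X \to B$.

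The engine is already available: the proof of Proposition \ref{prop:fibcss} establishes, via the conservativity of right fibrations, that any right fibration $X \to B$ is fiberwise complete, i.e. the square relating $d_1 \colon X_1^{\he} \to X_0$ and $d_1 \colon B_1^{\he} \to B_0$ is homotopy cartesian. Granting this, the corollary is immediate: since $B$ is complete, the bottom map $d_1 \colon B_1^{\he} \to B_0$ is a weak equivalence, and because the square is homotopy cartesian the top map $d_1 \colon X_1^{\he} \to X_0$ is its base change along a homotopy pullback, hence itself a weak equivalence. As $X$ is a Segal space, this weak equivalence is precisely the completeness condition, so $X$ is a complete Segal space.

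I expect no serious obstacle here; the one point that needs care is that Proposition \ref{prop:fibcss} is phrased with the extra hypothesis that $X \to B$ be a levelwise fibration, whereas the corollary assumes only that $X \to B$ is a right fibration. Since both ``complete Segal space'' and ``fiberwise complete'' are formulated in terms of homotopy cartesian squares and derived mapping spaces, they are invariant under levelwise weak equivalence. I would therefore first factor $X \to B$ as a levelwise trivial cofibration $X \xrightarrow{\sim} X'$ followed by a levelwise fibration $X' \to B$, note that $X' \to B$ is again a right fibration (its defining square is levelwise equivalent to that of $X$, hence still homotopy cartesian, and $X'$ is still Segal), apply the argument above to $X'$, and transfer the conclusion back along the levelwise equivalence $X \simeq X'$.

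Alternatively, and more in the spirit of this subsection, one can argue purely model-categorically after the same levelwise-fibration replacement: Proposition \ref{prop:fibcss} then exhibits $X' \to B$ as a fibration in the complete Segal space model structure, and since $B$ is complete it is fibrant there, so the composite $X' \to B \to \ast$ is a fibration and $X'$ is fibrant, i.e. a complete Segal space. Either route reduces the statement to the homotopy-invariance remarks of the previous paragraph, which is the only genuinely necessary (and entirely routine) verification.
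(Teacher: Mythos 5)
Your proof is correct and matches the paper's intent: the corollary is stated without an explicit proof precisely because it is meant to follow immediately from Proposition \ref{prop:fibcss}, and both of your routes (base-changing the completeness equivalence $B_1^{\he} \simeq B_0$ along the homotopy cartesian fiberwise-completeness square, or composing fibrations in the complete Segal space model structure over the fibrant base $B$) are the intended argument. Your extra care about the levelwise-fibration hypothesis is a reasonable precaution but not a substantive departure.
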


\begin{rem}\label{rem:fibcss}
Proposition \ref{prop:fibcss} implies that the left Bousfield localization of the (overcategory) \emph{complete} Segal space model structure with respect to the same maps as in proposition \ref{prop:rfibmodel} defines the same model category.
\end{rem}

\begin{prop}
Suppose $f : X \to Y$ is a Dwyer-Kan equivalence between Segal spaces over $B$. Then $f$ is a weak equivalence in the right fibration model structure. If in addition $f$ is a right fibration then $f$ is a levelwise weak equivalence.
\end{prop}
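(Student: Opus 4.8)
My plan is to deduce the first assertion purely from the localization description of the model structures. By the comparison between the Rezk nerve and the hammock localization recalled above (together with Remark \ref{rem:E}), a Dwyer--Kan equivalence between Segal spaces is exactly a weak equivalence in the complete Segal space model structure; since weak equivalences in an over-category are created by the forgetful functor to $\SSp$, the map $f$ is then a weak equivalence in the over-category complete Segal space model structure on $\SSp_{/B}$. I would then invoke Remark \ref{rem:fibcss}, which presents the right fibration model structure as a left Bousfield localization of exactly this model structure. Because a left Bousfield localization only enlarges the class of weak equivalences, $f$ is a weak equivalence in the right fibration model structure, which is the first assertion.

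For the second assertion I would assume in addition that $f : X \to Y$ is a right fibration. The key point is that $X$ and $Y$ need not be right fibrations over $B$, so the convenient detection of weak equivalences on object spaces is not directly available over $B$; I would therefore change the base to $Y$. Both $(X \xrightarrow{f} Y)$ and $(Y \xrightarrow{\mathrm{id}} Y)$ are right fibrations over the Segal space $Y$ --- the former by hypothesis, the latter trivially --- and $f$ is a map between them in $\SSp_{/Y}$ over $\mathrm{id}_Y$. Applying the first assertion with base $Y$ (its proof works over any Segal space), $f$ is a weak equivalence in the right fibration model structure on $\SSp_{/Y}$. As it is a weak equivalence between right fibrations over $Y$, the characterization of weak equivalences between right fibrations in Proposition \ref{prop:rfibmodel} (which, being detected on object spaces, holds in both directions) then forces $X_0 \to Y_0$ to be a weak equivalence of spaces. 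To apply that characterization verbatim I would first replace $f$ by a levelwise fibration, which alters neither its being a right fibration nor its levelwise homotopy type.

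To finish I would use that $f$ is a right fibration once more: Proposition \ref{prop:rightfibequiv}(2), taken with base $Y$, gives a homotopy cartesian square identifying $X_n \simeq X_0 \times^h_{Y_0} Y_n$ for every $n$. Since $X_0 \to Y_0$ is a weak equivalence, base change along $Y_n \to Y_0$ makes $X_n \to Y_n$ a weak equivalence for all $n$, so $f$ is a levelwise weak equivalence. I expect the one genuinely non-formal step to be the base change in the second paragraph: over $B$ the objects are not right fibrations and the object-space detection of weak equivalences fails, so the whole argument hinges on recognizing that passing to the base $Y$ turns both the source and the terminal object into right fibrations; everything else is a formal consequence of the localization picture and of Proposition \ref{prop:rightfibequiv}.
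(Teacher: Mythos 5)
Your first assertion is proved exactly as in the paper: Dwyer--Kan equivalences between Segal spaces are the weak equivalences of the complete Segal space model structure, the over-category creates these, and the right fibration model structure is a further left Bousfield localization (remark \ref{rem:fibcss}), which only enlarges the weak equivalences. For the second assertion you take a genuinely different route. The paper argues in two lines of general model category theory: by proposition \ref{prop:fibcss} a right fibration which is a levelwise fibration is a fibration in the complete Segal space model structure, so $f$ is an acyclic fibration there; and in any left Bousfield localization the acyclic fibrations coincide with those of the underlying (levelwise) model structure, whence $f$ is a levelwise equivalence. You instead change base to $Y$, observe that both $X \to Y$ and $\mathrm{id}_Y$ are right fibrations over $Y$, apply the first assertion over $Y$ to get a weak equivalence between fibrant objects of $\SSp_{/Y}$, read off that $X_0 \to Y_0$ is an equivalence from the object-space characterization, and then propagate to all levels via the homotopy cartesian squares of proposition \ref{prop:rightfibequiv}(2). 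Both arguments are correct; yours is longer but more concrete, exploits the base-change trick (which is a nice observation in its own right), and avoids proposition \ref{prop:fibcss} entirely. Note that the paper's version tacitly assumes $f$ is a levelwise fibration in order to call it an acyclic fibration, so it needs the same kind of fibrant replacement that you carry out explicitly; in that respect your write-up is actually the more careful of the two. A small shortcut you could have taken: once $f$ is known to be a weak equivalence between fibrant objects of the localized structure over $Y$, it is already a weak equivalence in the underlying Segal space over-category model structure, i.e.\ levelwise, without passing through $X_0 \to Y_0$ at all.
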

\begin{proof}
Dwyer-Kan equivalences are the weak equivalences between Segal spaces in the complete Segal space model structure, and the right fibration model structure on $B$ is a left Bousfield localization of the overcategory of the complete Segal space model structure, so the first claim follows.

 As for the second claim, using proposition \ref{prop:fibcss} the assumption is that $f$ is an acyclic fibration in the complete Segal space model structure. But in any left Bousfield localization, the set of acyclic fibrations coincides with the set of acyclic fibrations of the underlying model structure (which in this case is the levelwise model structure on simplicial spaces).
\end{proof}

\subsection{Comparison with quasi-categories}\label{sec:qcats}
Recall that a space (simplicial set) $X$ is viewed as a simplicial discrete space (bisimplicial set) by taking the pullback along $p_1 : \Delta \times \Delta \to \Delta$, the projection onto the first factor. (The space (set) of $n$-simplices of $p_1^*X$ is $X_n$, viewed as a constant simplicial set.) The functor $p_1$ has a left adjoint, $i_1$, given by $i_1([n]) = ([n], [0])$, so there is an adjunction
\[
p_1^* : \Sp \leftrightarrows s\Sp : i_1^*X
\]
The set of $n$-simplices of $i_1^*X$ is the set of $0$-simplices of $X_n$, the space of $n$-simplices of $X$. By  \cite{JoyalTierney}, the pair $(p_1^*, i_1^*)$ forms a Quillen equivalence between the model structure for quasi-categories (alias the \emph{Joyal} model structure) and the model structure for complete Segal spaces (with underlying Reedy model structure on simplicial spaces). We can use that to prove that the notion of right fibration for Segal spaces agrees with the quasi-categorical one:

\begin{thm} Let $B$ be a (Reedy fibrant) complete Segal space. Then $(i_1^*,p_1^*)$ forms a Quillen equivalence between $\SSp_{/B}$ and $\Sp_{/i_1^*B}$, both equipped with the right fibration model structure.
\end{thm}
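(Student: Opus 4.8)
The plan is to realize the desired equivalence as the localization of an equivalence that already exists before passing to right fibrations. First I would fix the adjunction on slices. Since $i_1^*$ is right Quillen from the complete Segal space model structure to the Joyal model structure \cite{JoyalTierney}, the object $i_1^* B$ is a quasi-category, so the right fibration model structure on $\Sp_{/i_1^* B}$ makes sense. Applying $i_1^*$ to structure maps gives a functor $\SSp_{/B} \to \Sp_{/i_1^* B}$, whose left adjoint $\lambda$ sends $(A \to i_1^* B)$ to $(p_1^* A \to p_1^* i_1^* B \to B)$, the second map being the counit; this is the pair $(i_1^*, p_1^*)$ of the statement, with $i_1^*$ the right adjoint. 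By Remark \ref{rem:fibcss} the right fibration model structure on $\SSp_{/B}$ is the left Bousfield localization of the \emph{complete} Segal space overcategory model structure at the set $S = \{\LL\Delta[0] \to \LL\Delta[1] \xrightarrow{f} B : f \in B_{1,0}\}$ of Proposition \ref{prop:rfibmodel}, and the right fibration model structure on $\Sp_{/i_1^* B}$ is likewise the localization of the Joyal overcategory model structure at the analogous set $T = \{\Delta[0] \to \Delta[1] \xrightarrow{g} i_1^* B : g \in (i_1^* B)_1\}$. Thus it suffices to localize an underlying Quillen equivalence at corresponding sets of maps.

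Next I would check that the un-localized adjunction $\lambda \dashv i_1^*$, between the Joyal overcategory structure on $\Sp_{/i_1^* B}$ and the complete Segal space overcategory structure on $\SSp_{/B}$, is a Quillen equivalence. This follows from the Joyal--Tierney equivalence by slicing over $B$: slicing a right Quillen equivalence over a fibrant object of its domain yields a right Quillen equivalence. Concretely, the fibrant objects of $\SSp_{/B}$ are fibrations $Y \to B$ with $Y$ fibrant, a map between such is a weak equivalence in the slice if and only if it is one in $\SSp$, so $i_1^*$ reflects weak equivalences between fibrant objects because the ambient $i_1^*$ does; and the derived unit at a cofibrant $A \to i_1^* B$ coincides with the derived unit of $(p_1^*, i_1^*)$ at $A$, hence is a weak equivalence. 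This gives the Quillen equivalence before localization.

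Now I would match the two localizing sets. Since $p_1^* \Delta[n]$ is exactly the simplicial discrete space $\Delta[n]$ and $p_1^*$ carries $d^1 : \Delta[0] \to \Delta[1]$ to $d^1 : \Delta[0] \to \Delta[1]$, the left adjoint sends a generator $\Delta[0] \to \Delta[1] \xrightarrow{g} i_1^* B$ of $T$ to $\Delta[0] \to \Delta[1] \xrightarrow{\bar g} B$, where $\bar g$ is the adjunct of $g$. Because $(i_1^* B)_1 = B_{1,0} = \mathrm{Hom}_{\SSp}(\Delta[1], B)$, the assignment $g \mapsto \bar g$ is a bijection onto the indexing set of $S$; as all the maps involved are between cofibrant objects, $\LL\lambda$ carries $T$ bijectively onto $S$.

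Finally I would localize and conclude. As $\lambda$ sends $T$ into the weak equivalences of the localized target, the universal property of left Bousfield localization makes $\lambda \dashv i_1^*$ a Quillen adjunction between the two right fibration model structures. For the equivalence, recall that the un-localized derived adjunction $\LL\lambda \dashv \RR i_1^*$ is an adjoint equivalence of homotopy categories, and that the homotopy category of each localization is the reflective subcategory of $S$-local, respectively $T$-local, objects. The derived adjunction isomorphism $\RR\map_{i_1^* B}(t, \RR i_1^* Y) \simeq \RR\map_B(\LL\lambda\, t, Y)$ together with $\LL\lambda(T) = S$ shows that $Y$ is $S$-local if and only if $\RR i_1^* Y$ is $T$-local; hence the equivalence of homotopy categories restricts to an equivalence between the local subcategories, which is exactly the derived adjunction of the localized pair, so that pair is a Quillen equivalence. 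The main obstacle is precisely the bookkeeping of this last step: confirming that slicing over the fibrant base $B$ preserves the Quillen equivalence and, above all, that $\LL\lambda$ identifies the generating sets $T$ and $S$ on the nose, which is what makes the two localizations correspond.
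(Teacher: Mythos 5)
Your overall strategy --- slice the Joyal--Tierney equivalence over $B$ and then match up the two Bousfield localizations --- is the same as the paper's, and the formal steps you carry out (the sliced adjunction is a Quillen equivalence before localizing; localizing a Quillen equivalence at corresponding sets of maps yields a Quillen equivalence of the localizations) are correct and are implicit in the paper as well. But there is a genuine gap at the one step that carries the mathematical content. You assert that the right fibration model structure on $\Sp_{/i_1^*B}$ is the left Bousfield localization of the Joyal overcategory structure at the set $T$ of vertex inclusions $\Delta[0]\xrightarrow{d^1}\Delta[1]\to i_1^*B$. That is not how this model structure is defined or characterized: by \cite{HeutsMoerdijk1}, which the paper cites, it is the localization at \emph{all} right horn inclusions $\Lambda^n[0]\hookrightarrow\Delta[n]\to i_1^*B$ for $n\geq 1$, of which your set $T$ is only the $n=1$ layer. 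Since the theorem is precisely a comparison with the quasi-categorical notion of right fibration, you cannot take your description of that side as a definition without emptying the statement of content; and once you take the standard description, the claim that localizing at $T$ already inverts all the higher right horn inclusions is exactly what must be proved.

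The paper supplies this missing argument. For $n\geq 2$ it factors $d^n$ as $\Delta[n-1]\to\Lambda^n[0]\to\Delta[n]$, observes that the composite is a weak equivalence in the right fibration model structure (this is the content of proposition \ref{prop:rightfibequiv}), shows that the first map is a composite of pushouts along right horn inclusions of strictly lower dimension --- hence an acyclic cofibration by induction --- and concludes by two-out-of-three that $\Lambda^n[0]\to\Delta[n]$ is already a weak equivalence after localizing at the $n=1$ generators. Your closing remark that the main obstacle is ``the bookkeeping'' of identifying the generating sets inverts the actual difficulty: the bookkeeping is routine, and the identification of the two localizations is the theorem. If you insert the inductive horn-filling argument (it can be performed on either side of the Joyal--Tierney equivalence), your proof goes through.
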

\begin{proof}
According to \cite{HeutsMoerdijk1}, when $Y$ is a quasi-category, the right fibration model structure on $\Sp_{/Y}$ is obtained by left Bousfield localization of the overcategory of the Joyal model structure, with respect to right horn inclusions, i.e. maps $\Lambda^n[0] \hookrightarrow \Delta[n] \to Y$ for each $n \geq 1 $, where $\Lambda^n[0]$ is the union of all the faces except $d^0$. (Note that Moerdijk-Heuts'  use different conventions, so their definitions are the mirror of ours.) For a complete Segal space $B$, we have established (c.f. remark \ref{rem:fibcss}) that the right fibration model structure on $\SSp_{/B}$ can be obtained as the left Bousfield localization of the overcategory of the complete Segal space model structure with respect to the maps $p_1^*\Delta[0] \xrightarrow{} p_1^*\Delta[1] \to Y$ (left-hand map is $d^1$).

 The claim is that the overcategory model structures are being localized at the same set of maps. In other words, the maps $p_1^*(\Lambda^n[n]) \hookrightarrow p_1^*(\Delta[n])$ (over $\LL p_1^*i_1^*B \simeq B$) are weak equivalences in the complete Segal space side. For $n = 1$ there is nothing to prove. For higher $n$, factor $d^n$ as
\[
\Delta[n-1] \to \Lambda^n[0] \to \Delta[n]
\]
(Here, and in line with the rest of the paper, we have omitted $p_1^*$ from the notation.)
The composite map is a weak equivalence by proposition \ref{prop:fibcss}. The left-hand map can be
given as a composition of lower dimensional right horn extensions (that is, pushouts along right horn inclusions $\Lambda^k[0] \to \Delta[k]$ for $k < n$), each of which is an acyclic cofibration by induction. Therefore, the left-hand map in the display is a weak equivalence and so, by two-out-of-three, the right-hand map is a weak equivalence.
\end{proof}

\begin{rem}
The condition that $B$ is \emph{complete} can be dropped. Indeed, a Segal space $B$ which is not necessarily complete can be completed: in \cite{Rezk}, Rezk constructs a Dwyer-Kan equivalence $B \to B^{\sharp}$ to a \emph{complete} Segal space $B^\sharp$. The homotopy theories of right fibrations over $B$ and $B^{\sharp}$ are equivalent (this will be proved in corollary \ref{cor:comp}). Therefore, by the theorem above, the homotopy theory of right fibrations over $B$ is equivalent to the one of quasi-categorical right fibrations over $i_1^* B^\sharp$.
\end{rem}

\subsection{Terminal objects and the generalized Yoneda lemma}
\begin{defn}
Let $X$ and $Y$ be simplicial spaces. The (derived) internal hom construction $X^Y$ is the simplicial space with $n$-simplices given by 
\[
\RR \map(\Delta[n] \times Y, X)
\]
where $\RR \map$ denotes the derived mapping space in the levelwise model structure. Henceforth, this will be the standard interpretation, unless mentioned otherwise. To make sure this defines a functor in the variable $[n]$, one should fix cofibrant/fibrant replacement functors in $\SSp$.
\end{defn}

\begin{rem}
The construction above \emph{may not be} the derived functor (in a model categorical sense) of the usual internal hom functor in simplicial spaces, e.g. if the projective model structure is taken. If one works with the injective model structure as Rezk does, then it is. That is a consequence of the fact that this model structure is \emph{closed} (see \cite{Rezk}). We'll be cavalier about this point and simply use the definition above. The reason for that is we will often want to use the projective model structure.
\end{rem}

\begin{defn}[Generalized overcategory]
Let $X$ be a Segal space. For a space $K$ and map $\alpha: K \to X_0$, define $X/K$ to be the simplicial space obtained as the homotopy pullback of
\[
X^{\Delta[1]} \xrightarrow{d_1} X \xleftarrow{\alpha} \Delta[0] \times K
\]
\end{defn}

\begin{rem}
When $K = X_0$ and $\alpha$ is the identity map, $X/X_0$ is the Segal space whose objects are morphisms $x \to y$ in $X$. A morphism is a string $x \to x^{\prime} \to y$ (its source is the composite $x \to y$ and its target is $x^{\prime} \to y$). Note that this construction is not what is often referred to as the arrow category. Rather, it is a overcategory-type construction, as exemplified by the case $K = *$ and $\alpha$ the map selecting an object $x$. 
\end{rem}

\begin{prop}
The map $d_0 : X/K \to X$, induced by $d_0 : X^{\Delta[1]} \to X$, is a right fibration.
\end{prop}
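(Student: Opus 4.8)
The plan is to check directly the two conditions in the definition of a right fibration for the map $d_0\colon X/K\to X$: first, that $X/K$ is a Segal space, and second, that the square (\ref{eq:rfibsquare}) attached to $d_0$ is homotopy cartesian. Segal-ness is soft, so the real content is the homotopy-cartesian condition. I emphasize at the outset that the map $d_1\colon X^{\Delta[1]}\to X$ used to form $X/K$ is \emph{not} itself a right fibration (its fibres are overcategories, not spaces), so there is no shortcut through Proposition \ref{prop:rfibbase}; the right-fibration property genuinely comes from having restricted the target.

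For Segal-ness, I would argue that $X/K$ is a finite homotopy limit. It is the homotopy pullback of $X^{\Delta[1]}$, $X$ and the constant object $\Delta[0]\times K$; the Segal condition (\ref{eq:segalmaps}) is itself a homotopy-limit condition, and homotopy limits commute, so Segal spaces are closed under homotopy pullback. The constant $\Delta[0]\times K$ is trivially Segal, and $X^{\Delta[1]}$ is Segal because cotensoring a Segal space by any simplicial space again yields a Segal space (the model structure is simplicial/cartesian, so $\RR\map(-\times\Delta[1],X)$ carries the Segal equivalence $\Delta[1]\amalg_{\Delta[0]}\cdots\amalg_{\Delta[0]}\Delta[1]\to\Delta[n]$ to a weak equivalence). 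Hence $X/K$ is Segal.

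The heart is the square. I would first identify the levels of $X/K$ by a d\'ecalage (cone) computation: an $n$-simplex of $(X^{\Delta[1]})_n=\RR\map(\Delta[n]\times\Delta[1],X)$ whose target edge $\Delta[n]\times\{1\}$ is totally degenerate is the same as a map out of the cone $\Delta[n]\times\Delta[1]/(\Delta[n]\times\{1\})\cong\Delta[n+1]$, so that $(X/K)_n\simeq X_{n+1}\times^h_{X_0}K$, the pullback being along the cone-vertex (common target) map $X_{n+1}\to X_0$ and along $\alpha\colon K\to X_0$. The square (\ref{eq:rfibsquare}) for $d_0$ is homotopy cartesian precisely when the comparison $(X/K)_1\to (X/K)_0\times^h_{X_0}X_1$ is an equivalence, the pullback being formed with $d_0\colon (X/K)_0\to X_0$ and $d_1\colon X_1\to X_0$. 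Substituting the d\'ecalage identifications, this reduces to the $n=2$ case packaged in the square
\[
\begin{CD}
X_1 @<{d_2}<< X_2 \\
@V{d_0}VV @VV{d_0}V \\
X_0 @<{d_1}<< X_1
\end{CD}
\]
whose comparison map $X_2\to X_1\times^h_{X_0}X_1$ is exactly the Segal map (\ref{eq:segalmaps}); hence this square is homotopy cartesian because $X$ is a Segal space. To pass to general $K$, I would then note that the common-target projection $X_2\to X_0$ factors through the first-edge face $d_2\colon X_2\to X_1$ (the factor carrying the horizontal map of the square), so that the homotopy pullback over $K$ along the target commutes with the Segal pullback; re-associating homotopy pullbacks (using right properness of $\Sp$, as in Proposition \ref{prop:rfibbase}) keeps the $K$-pulled-back square homotopy cartesian. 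Together with Segal-ness this shows $d_0$ is a right fibration.

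The hard part is precisely the bookkeeping in the last step: correctly matching the face and degeneracy operators of the overcategory $X/K$ to faces of $X$ under the d\'ecalage, and verifying that the common-target projection factors through the correct Segal factor so that the two homotopy pullbacks genuinely commute. Everything surrounding this identification is formal.
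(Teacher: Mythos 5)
Your proof is correct and follows essentially the same route as the paper: the paper identifies $(X/X_0)_0\simeq X_1$ and $(X/X_0)_1\simeq X_2$ (via the prism decomposition $\Delta[1]\times\Delta[1]=\Delta[2]\amalg_{\Delta[1]}\Delta[2]$ and the Segal collapse $X_2\times^h_{X_1}X_0\simeq X_1$), whereupon the right-fibration square becomes exactly your Segal square $(d_2,d_0)\colon X_2\to X_1\times^h_{X_0}X_1$, and then passes to general $K$ by base change along $K\to X_0$. The only caveat is that your cone identification $\Delta[n]\times\Delta[1]/(\Delta[n]\times\{1\})\cong\Delta[n+1]$ is a weak equivalence in the Segal model structure rather than an isomorphism of simplicial sets (this is precisely where the Segal condition enters, as in the paper's $n=1$ computation), but this does not affect the argument.
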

\begin{proof}
Rezk shows \cite{Rezk} that $X^{\Delta[1]}$ is a Segal space. This implies that $X/K$ is a Segal space, so we only need to verify the fibration condition (\ref{eq:rfibsquare}). We will do this for $Y:=X/X_0$. The general case then follows, using that $X/K \simeq Y \times_{X_0} K$.

 By construction, $Y_0$ is identified with $X_1$. Let us describe $Y_1$ in words, first. It is  the space of commutative squares in $X$, i.e. $g f = g^{\prime} f^{\prime}$, with the extra condition that $g^{\prime}$ is an identity morphism. This last condition obviously forces the equality $f^{\prime} = gf$. That is, $Y_1$ is identified with the space of two composable morphisms in $X$, alias $X_2$. More formally, use the decomposition \[\Delta[1] \times \Delta[1] = \Delta[2] \amalg_{\Delta[1]} \Delta[2]\] of the square in two triangles, to write $Y_1$ as the homotopy limit of
\[
X_2 \xrightarrow{d_1} X_1 \xleftarrow{d_1} X_2 \xrightarrow{d_2} X_1 \xleftarrow{s_0} X_0
\]
Since $X$ is a Segal space, we have that $X_2 \times_{X_1} X_0 \simeq X_1$ and hence $Y_1 \simeq X_2$, as claimed. 
\end{proof}

\begin{defn}
An object $x \in X$ is \emph{terminal} if $d_0 : X/x \to X$ is a weak equivalence of simplicial spaces.
\end{defn}

This is equivalent to the condition that the homotopy fiber of the target map $d_1 : X_1 \to X_0$ over $x$ be weakly equivalent to $X_0$. 

\medskip
The crucial observation is

\begin{lem}\label{lem:terminal}
Let $p : X \to B$ be a simplicial space over $B$ and suppose $X$ has a terminal object $x$. Then for every right fibration $q: Y \to B$ the map
\begin{equation}\label{eq:terminal}
x^* : \RR \map_{B}(X, Y) \rightarrow \RR \map_{B}(\Delta[0], Y)
\end{equation}
given by precomposition with $x : \Delta[0] \to X$ is a weak equivalence. That is to say, the inclusion $\Delta[0] \xrightarrow{x} X$ (over $B$) is a weak equivalence in the right fibration model structure.
\end{lem}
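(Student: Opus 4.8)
The plan is to use the defining property of the right fibration model structure to collapse the two assertions of the lemma into one, and then to realize the inclusion of the terminal object as a trivial cofibration by contracting $X$ onto $x$. By the final (\emph{more generally}) clause of Proposition \ref{prop:rfibmodel}, the statement that (\ref{eq:terminal}) is a weak equivalence for every right fibration $Y \to B$ is \emph{equivalent} to the statement that $x \colon \Delta[0] \to X$ is a weak equivalence in the right fibration model structure; so it suffices to prove the former, and I may assume $q \colon Y \to B$ is fibrant, i.e. a right fibration and a levelwise fibration. Writing $b := p(x) \in B_0$, I identify the target $\RR\map_B(\Delta[0], Y)$ with the homotopy fiber $Y_b$ of $Y_0 \to B_0$ over $b$.

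I first produce a contracting homotopy from terminality. Since $x$ is terminal, $d_0 \colon X/x \to X$ is a levelwise acyclic fibration; choosing a section $j$ and composing $X \xrightarrow{j} X/x \hookrightarrow X^{\Delta[1]}$ yields, by adjunction, a map $\eta \colon X \times \Delta[1] \to X$ whose restriction to the target end $X \times \{0\}$ is (homotopic to) the constant map $c_x$ and whose restriction to the source end $X \times \{1\}$ is $\mathrm{id}_X$. I equip $X \times \Delta[1]$ with the structure map $p\eta$, so that $\eta$ becomes a map over $B$ and every fibre $\{w\} \times \Delta[1]$ maps to the edge $p(w) \to b$ of $B$ having target $b$ at the $0$-end. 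Denote by $i_0, i_1$ the two endpoint inclusions, so that $\eta i_1 = \mathrm{id}_X$ and $\eta i_0 = x \circ c$, where $c \colon X \to \Delta[0]$.

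The key step — and the main obstacle — is the claim that $i_0 \colon (X, c_b) \hookrightarrow (X \times \Delta[1], p\eta)$ is a trivial cofibration in the right fibration model structure. It is evidently a cofibration; for acyclicity I would filter $X$ by skeleta and build $X \times \Delta[1]$ from $X \times \{0\}$ by successively attaching cells $\Delta[n] \times \Delta[1]$ along $\Delta[n] \times \{0\} \cup \partial\Delta[n] \times \Delta[1]$. Each such attachment is a pushout of maps that decompose into the generating localizing maps $\Delta[0] \xrightarrow{d^1} \Delta[1] \xrightarrow{f} B$, the inclusions of the \emph{target} vertex of an edge of $B$, which are exactly the acyclic cofibrations defining the localization (compare the remark following Proposition \ref{prop:rightfibequiv}). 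This ``right-anodyne'' bookkeeping, adapting the classical argument that $\{x\} \hookrightarrow X$ is right-anodyne when $x$ is terminal, is where the real work sits; the subtlety is that the contraction $\eta$ is \emph{not} a map over $B$, and it is precisely the target-pointing structure of $p\eta$ that lets every cell be expressed through the localizing generators.

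Granting the claim, the conclusion is clean. Because the model structure is simplicial and $Y$ is fibrant, $i_0^*$ is an acyclic fibration of spaces with target $\RR\map_B((X, c_b), Y) = \map(X, Y_b)$. I define $\phi \colon Y_b \to \RR\map_B(X, Y)$ by $\phi(\tilde b) = i_1^* H$, where $H$ is a lift through $i_0^*$ of the constant map at $\tilde b$, and check that $\phi$ is a two-sided homotopy inverse of $x^*$: for $f \in \RR\map_B(X, Y)$ the composite $f \circ \eta$ is itself such a lift, so $\phi(x^* f) \simeq i_1^*(f\eta) = f$ by contractibility of the space of lifts; conversely $x^* \phi(\tilde b) = H(x,1)$, and since $H|_{\{x\} \times \Delta[1]}$ lies over the degenerate edge $s_0 b$ with target $\tilde b$ at the $0$-end, conservativity of right fibrations (as in the proof of Proposition \ref{prop:fibcss}, where edges over a degenerate edge are determined by their target) forces this edge to be degenerate, whence $H(x,1) = \tilde b$. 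This shows $x^*$ is a weak equivalence, which is what was needed.
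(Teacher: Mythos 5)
Your strategy is genuinely different from the paper's, which never builds a cylinder: there the lemma is reduced, via the square (\ref{eq:ydiag}) of pointed mapping spaces and the overcategories $X/x$ and $X/id_x$, to the single equivalence $Y/y \xrightarrow{\simeq} B/b$ supplied directly by the right fibration condition. Your route --- contract $X$ onto $x$, put $X\times\Delta[1]$ over $B$ via $p\eta$, and show the target-end inclusion is ``right anodyne'' --- is the standard quasi-categorical cofinality argument transported to Segal spaces, and it could in principle work. But as written it has a genuine gap exactly where you say ``the real work sits'': the claim that $i_0\colon (X,c_b)\hookrightarrow(X\times\Delta[1],p\eta)$ is a trivial cofibration in the right fibration model structure is asserted, not proved, and it is not a routine consequence of anything established at this point in the paper. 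Concretely you would need: (i) the combinatorial fact that $\Delta[n]\times\{0\}\cup\partial\Delta[n]\times\Delta[1]\hookrightarrow\Delta[n]\times\Delta[1]$ decomposes into right horn inclusions --- it does not decompose directly into the localizing generators $\Delta[0]\xrightarrow{d^1}\Delta[1]$ as you suggest; (ii) the statement that every right horn inclusion $\Lambda^n[0]\hookrightarrow\Delta[n]$, over an arbitrary map to $B$, is a trivial cofibration in the right fibration model structure, which the paper only establishes later, in the comparison with quasi-categories; and (iii) a version of all this for bisimplicial cells, since $X$ is a simplicial \emph{space} rather than a simplicial set, so its skeletal filtration attaches cells $\Delta[n]\times\Delta[m]$ with a nontrivial space direction and the relevant maps are pushout-products involving $\partial\Delta[m]\hookrightarrow\Delta[m]$ as well. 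None of these steps is false, but together they carry essentially all the content of the lemma, so deferring them leaves the proof incomplete.

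Two smaller inaccuracies should also be repaired if you pursue this route. First, a section $j$ of $d_0\colon X/x\to X$ (which exists only after fibrant replacement, since terminality gives a levelwise weak equivalence, not a levelwise acyclic fibration) yields $\eta i_0$ merely \emph{homotopic} to the constant map at $x$; hence the triangle exhibiting $i_0$ as a map in $\SSp_{/B}$ does not strictly commute and must be rectified before one can speak of a (trivial) cofibration over $B$. Second, in the last step the right fibration condition gives that the space of edges of $Y$ lying over $s_0 b$ with target $\tilde b$ is \emph{contractible}, not a single point, so you obtain a path $H(x,1)\simeq\tilde b$ rather than an equality; this still yields $x^*\phi\simeq\mathrm{id}$, but the appeal to ``conservativity forcing the edge to be degenerate'' is stronger than what is true.
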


The following immediate corollary may be regarded as an ($\infty$,1)-categorical form of the Yoneda lemma.
\begin{lem}[Yoneda lemma]\label{lem:yoneda}
The inclusion
$
\Delta[0] \to Z/z
$
over $Z$ which selects $id_z := s_0(z)$ is a weak equivalence in the right fibration model structure. 
\end{lem}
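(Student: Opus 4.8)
The plan is to obtain the statement as the promised immediate corollary of Lemma \ref{lem:terminal}. Take $B = Z$ and let $X = Z/z$, regarded as a simplicial space over $Z$ via the source map $d_0 \co Z/z \to Z$ (which, by the proposition identifying $d_0 \co X/K \to X$ as a right fibration, is even a right fibration). If I can show that $id_z = s_0(z)$ is a terminal object of the Segal space $Z/z$, then Lemma \ref{lem:terminal} applies verbatim and tells me that the inclusion $\Delta[0] \xrightarrow{id_z} Z/z$ over $Z$ — which is exactly the map in the statement — is a weak equivalence in the right fibration model structure. The two sides are genuinely over $Z$: the composite $\Delta[0] \xrightarrow{id_z} Z/z \xrightarrow{d_0} Z$ selects the source of $id_z$, namely $z$. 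So everything reduces to the single claim that $id_z$ is terminal in $Z/z$.

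To verify terminality I would use the equivalent formulation recorded after the definition of terminal object: $id_z$ is terminal in $Z/z$ precisely when the homotopy fiber of the target map $d_1 \co (Z/z)_1 \to (Z/z)_0$ over $id_z$ is weakly equivalent to $(Z/z)_0$. Here I recall the explicit description of $Z/z$ from the proof that $d_0 \co X/K \to X$ is a right fibration: its space of objects $(Z/z)_0$ is the homotopy fiber of $d_1 \co Z_1 \to Z_0$ over $z$, i.e. the space of arrows with target $z$, while $(Z/z)_1 \simeq Z_2 \times^h_{Z_0} \{z\}$ is the space of strings $w \to w' \to z$ with the last vertex fixed at $z$; the target map $d_1$ sends such a string to the arrow $(w' \to z)$.

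The computation then runs as follows. Taking the homotopy fiber of $d_1$ over $id_z$ fixes the edge $(w' \to z)$ to be $s_0(z)$, which forces $w' = z$ and leaves the strings of the form $w \to z \xrightarrow{id} z$. By the Segal condition for $Z$ (equivalently, because composing a morphism with an identity reproduces it, as witnessed by $s_0$), this space is canonically weakly equivalent to the space of arrows $w \to z$, that is to $(Z/z)_0$, and under this identification the residual source map $d_0$ becomes the identity. This exhibits the required weak equivalence and shows $id_z$ is terminal. The only point demanding care — and the one I would write out in full — is this last identification: matching the face/degeneracy conventions of the paper (source $= d_0$, target $= d_1$) with the homotopy-limit description $(Z/z)_1 \simeq Z_2 \times^h_{Z_0}\{z\}$, and invoking the Segal equivalence $Z_2 \simeq Z_1 \times^h_{Z_0} Z_1$ to see that fixing the second spine edge to be degenerate collapses the fiber onto $(Z/z)_0$ with the source projection being (homotopic to) the identity.
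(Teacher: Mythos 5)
Your proposal is correct and follows exactly the route the paper intends: the paper states the Yoneda lemma as an immediate corollary of Lemma \ref{lem:terminal}, applied to $X = Z/z$ over $B = Z$ once one knows $id_z$ is terminal in $Z/z$. The only content you add is the explicit verification of terminality via the Segal identification $(Z/z)_1 \simeq Z_2 \times^h_{Z_0}\{z\}$, which the paper leaves implicit (and uses again inside the proof of Lemma \ref{lem:terminal} itself), and your computation of it is consistent with the paper's conventions.
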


We will use the following simple observation repeatedly in the proof below. Suppose $B$ is a fibrant simplicial space. By definition, the derived mapping space $\RR \map_{B}(X,Y)$  is weakly equivalent to $\map_{B}(X^c, Y^f)$ where $Y^f \to B$ is a fibrant replacement of $Y \to B$ (a levelwise fibration) and $X^c$ a cofibrant replacement of $X$ (cofibrant as a simplicial space). The latter space is of course the pullback (alias fiber) of
\[
* \to \map(X^c, B) \gets \map(X^c, Y^f)
\]
where the right-hand map is given by composition with $Y^f \to B$. The right hand-map is a fibration, and both mapping spaces in the display are weakly equivalent to the derived mapping spaces. Therefore, for a map $Y \to B$, $\RR \map_{B}(X,Y)$ is naturally weakly equivalent to the homotopy pullback of
\[
* \to \RR \map(X, B) \gets \RR \map(X, Y) \; .
\]

\begin{proof}[Proof of lemma \ref{lem:terminal}]
Without loss of generality, we may assume that $Y \to B$ is fibrant (a right fibration and fibration of simplicial spaces) and $X$ is cofibrant and $B$ is fibrant as a simplicial space. Let $y \in Y_0$ with $q(y) = b$. Define $\RR \map((X,x), (Y,y))$ as the homotopy fiber of the map
\[
\RR \map(X,Y) \xrightarrow{x^*} \RR \map(\Delta[0], Y) = Y_0
\]
over $y \in Y$, and define $\RR \map((X,x), (B,b))$ similarly. By commuting homotopy limits, the homotopy fiber of (\ref{eq:terminal}) over $y$ is identified with the homotopy fiber of 
\[
\RR \map((X,x), (Y,y)) \to \RR \map((X,x), (B,b))
\]
over $p : X \to B$.

 We have a commutative square
\begin{equation}\label{eq:ydiag}
	\begin{tikzpicture}[descr/.style={fill=white}] 
	\matrix(m)[matrix of math nodes, row sep=2.5em, column sep=2.5em, 
	text height=1.5ex, text depth=0.25ex] 
	{
	\RR \map((X,x), (Y,y)) & \RR \map((X/x,id_x), (Y/y,id_y)) \\
	\RR \map((X,x), (B,b)) & \RR \map((X/x,id_x), (B/b,id_b)) \\
	}; 
	\path[->,font=\scriptsize] 
		(m-1-1) edge node [auto] {$$} (m-1-2);
	\path[->,font=\scriptsize] 
		(m-2-1) edge node [auto] {$$} (m-2-2);
	\path[->,font=\scriptsize] 
		(m-1-1) edge node [left] {} (m-2-1);
	\path[->,font=\scriptsize] 		
		(m-1-2) edge node [auto] {} (m-2-2);
	\end{tikzpicture} 
\end{equation}
(Here we follow Rezk and write $id_z \in Z_1$ for the image under $s_0$ of $z \in Z_0$.)

The horizontal maps are weak equivalences because $x$ is the terminal object of $X$. In more detail, the composition
\[
\RR \map((X,x), (Y,y)) \to \RR \map((X/x,id_x), (Y/y,id_y)) \xrightarrow{d_0} \RR \map((X/x,id_x), (Y,y)) 
\]
agrees with precomposition with $d_0 : X/x \to X$ and so it is a weak equivalence
(because $x$ is the terminal object of $X$). This shows that the top horizontal map in (\ref{eq:ydiag}) has a homotopy left inverse.

 Now let $X/id_{x}$ denote the over category $(X/x)/id_x$ whose objects are $2$-morphisms in $X$ with source $id_x$. The composition
\[
	\begin{tikzpicture}[descr/.style={fill=white}] 
	\matrix(m)[matrix of math nodes, row sep=2.5em, column sep=2.5em, 
	text height=1.5ex, text depth=0.25ex] 
	{
	\RR \map((X/x,id_x), (Y/y,id_y)) & \RR \map((X/x,id_x), (Y,y)) \\
	 & \RR \map((X/id_{x},id_{id_x}), (Y/y,id_y)) \\
	}; 
	\path[->,font=\scriptsize] 
		(m-1-1) edge node [auto] {} (m-1-2);
	\path[->,font=\scriptsize] 
		(m-1-2) edge node [auto] {$d_0$} (m-2-2);
	\end{tikzpicture} 
\]
agrees with precomposition with $d_0 : X/id_x \to X/x$ and so is a weak equivalence (because $id_x$ is the terminal object of $X/x$). This shows that the top horizontal map in (\ref{eq:ydiag}) also has a homotopy right inverse, and so is a weak homotopy equivalence. The same arguments show that the lower horizontal map is also a weak equivalence.

 It remains to show that the right-hand map in (\ref{eq:ydiag}) is a weak equivalence. Since $Y \to B$ is a right fibration, the induced map between horizontal homotopy fibers (over $y \in Y_0$ and $b \in B_0$) in the diagram
\[
	\begin{tikzpicture}[descr/.style={fill=white}] 
	\matrix(m)[matrix of math nodes, row sep=2.5em, column sep=2.5em, 
	text height=1.5ex, text depth=0.25ex] 
	{
	Y_0 & Y_1 \\
	B_0 & B_1 \\
	}; 
	\path[<-,font=\scriptsize] 
		(m-1-1) edge node [auto] {$d_1$} (m-1-2);
	\path[<-,font=\scriptsize] 
		(m-2-1) edge node [auto] {$d_1$} (m-2-2);
	\path[->,font=\scriptsize] 
		(m-1-1) edge node [left] {} (m-2-1);
	\path[->,font=\scriptsize] 		
		(m-1-2) edge node [auto] {} (m-2-2);
	\end{tikzpicture} 
\]
is a weak equivalence. That is, $Y/y \xrightarrow{\simeq} B/b$. This finishes the proof.
\end{proof}

\begin{rem}\label{lem:yonedaK}
There are parametrized versions of lemmas \ref{lem:terminal} and \ref{lem:yoneda}. Let $X$ be a Segal space over $B$. If $d_0 : X/K \to X$ is a weak equivalence (i.e. $K$ is a terminal subspace of the space objects of $X$) then
\[
\alpha : K \times \Delta[0] \to X
\]
(over $B$) is a weak equivalence in the right fibration model structure on $\SSp_{/B}$. Therefore, the map $K \times \Delta[0] \to X/K$ is a weak equivalence in the right fibration model structure on $\SSp_{/X}$.
\end{rem}

\subsection{Proof of theorem $A$}

We begin with
\begin{prop}\label{prop:leftadjointGC}
The Grothendieck construction $\sG$ has a simplicial left adjoint
\[
L : \SSp_{/N\sC} \leftrightarrows \Fun(\sC^{op}, \Sp) : \sG
\]
This adjunction is Quillen when both sides are equipped with the projective model structure.
\end{prop}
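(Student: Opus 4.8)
The plan is to construct $L$ explicitly on representables, deduce the adjunction from an enriched density argument, and then read off the Quillen property from the levelwise formula defining $\sG$. As motivation, note first that $\sG$ preserves limits: limits in the slice $\SSp_{/N\sC}$ are computed fiberwise over $N\sC$, and the fiber of $\sG(F)_n$ over a string $\sigma = (c_0 \gets \dots \gets c_n)$ is exactly $F(c_0)$, so $\sG$ commutes with them. This makes it a candidate right adjoint. To pin down the left adjoint I compute the simplicial mapping space out of a representable $\Delta[n] \xrightarrow{\sigma} N\sC$: a map to $\sG(F)$ over $N\sC$ lying over $\sigma$ is exactly a point of the fiber $F(c_0)$, so $\map_{N\sC}(\Delta[n]_\sigma, \sG F) \cong F(c_0)$ as simplicial mapping spaces. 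By the enriched Yoneda lemma $F(c_0) \cong \map_{\Fun(\sC^{\op},\Sp)}(\map_\sC(-,c_0), F)$, so I define $L$ on representables by $L(\Delta[n],\sigma) := \map_\sC(-,c_0(\sigma))$, the representable presheaf at the initial vertex, and extend to all of $\SSp_{/N\sC}$ as the simplicially enriched left Kan extension along the Yoneda embedding, $L(Z) := \operatorname{colim}_{(\Delta[n],\sigma)\to Z}\map_\sC(-,c_0(\sigma))$.

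Next I would verify the adjunction. Since $L$ preserves (enriched) colimits while $\map_{N\sC}(-,\sG F)$ and $\map_{\Fun}(-,F)$ turn colimits into limits, writing $Z$ as the canonical colimit of its representables and applying co-Yoneda gives
\[
\map_{\Fun}(L Z, F) \cong \lim_{(\Delta[n],\sigma)\to Z}\map_{\Fun}(\map_\sC(-,c_0), F) \cong \lim F(c_0) \cong \map_{N\sC}(Z,\sG F),
\]
naturally in $Z$ and $F$. Because every step is simplicially enriched, this is an isomorphism of simplicial mapping spaces, so $L \dashv \sG$ is a simplicial adjunction. For the remaining step it is convenient to record the reformulation $\sG(F)_n \cong a_n^*\,\mathrm{tot}(F)$, the pullback along the initial-vertex map $a_n^* : (N\sC)_n \to \textup{ob}(\sC)$ of the total space $\mathrm{tot}(F) \to \textup{ob}(\sC)$ of $F$.

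Finally, for the Quillen property I would show that $\sG$ is right Quillen. Both model structures are projective, so weak equivalences and fibrations are detected levelwise, and fibrations in the slice $\SSp_{/N\sC}$ are created by the forgetful functor to $\SSp$. Thus it suffices to observe that $F \mapsto \mathrm{tot}(F)$ carries objectwise fibrations and objectwise acyclic fibrations to fibrations (respectively acyclic fibrations) over $\textup{ob}(\sC)$ — a coproduct of fibrations when $\textup{ob}(\sC)$ is discrete, and the analogous statement for the appropriate projective structure in the internal case — and that pullback along $a_n^*$ preserves both classes. Hence each $\sG(F)_n \to \sG(G)_n$ is a (acyclic) fibration whenever $F \to G$ is, so $\sG$ preserves fibrations and acyclic fibrations and is right Quillen, with $L$ a simplicial left Quillen functor.

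I expect the only real friction to be the internal/enriched bookkeeping in the second and third steps. For $\sC$ internal to spaces the "coproduct over objects" in the definition of $\sG$ must be read as the total space fibered over the space $\textup{ob}(\sC)$, and the enriched density/Yoneda arguments have to be set up in that generality. Once $\sG$ is rewritten fiberwise as $a_n^*\,\mathrm{tot}(F)$, however, both the adjunction and the Quillen property reduce to standard facts — enriched co-Yoneda, and stability of (acyclic) fibrations under pullback and coproduct — so the substance of the proof is precisely arranging this total-space reformulation correctly.
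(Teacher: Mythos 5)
Your proposal is correct and follows essentially the same route as the paper: you define $L$ on representables as the representable presheaf $\map_{\sC}(-,c_0)$ at the ultimate target (initial vertex) of the string, extend by (enriched) colimits, verify the adjunction via Yoneda/co-Yoneda, and check that $\sG$ preserves levelwise fibrations and weak equivalences. Your pullback description $\sG(F)_n \cong a_n^*\,\mathrm{tot}(F)$ is just an expanded justification of the paper's observation that $\sG$ clearly preserves fibrations, and your enriched-Kan-extension phrasing corresponds to the paper's explicit formula on objects $\Delta[n]\times K \to N\sC$.
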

\begin{proof}
Suppose $X$ is a simplicial space over $N \sC$. Then $X$ can be presented as the colimit (alias coequalizer) in $\SSp$ of
\begin{equation}\label{eq:SSpcoend}
\coprod_{[n_0]} {\Delta}[{n_0}] \times X_{n_0}\leftleftarrows \coprod_{[n_0] \to [n_1] \in \Delta} {\Delta}[{n_0}] \times X_{n_1}
\end{equation}
This is a well known consequence of the Yoneda lemma. Note that the map from this colimit to $X$ is an isomorphism over $N\sC$.

 Left adjoints preserve colimits, and so $L$ is uniquely determined (up to isomorphism) by its value on objects of the form $\alpha : \Delta[n] \times K \to N\sC$ for some space $K$. Suppose given one such object corresponding via the Yoneda lemma to a map $\alpha : K \to N\sC_n$ from $K$ to the space of $n$ composable morphisms in $\sC$. Denote by $\alpha^\prime$ the map $K \to N\sC_n \to N\sC_0$ given by applying $\alpha$ and then $d_1d_2d_3\dots d_n$, the operation selecting the ultimate target object.

 Define
\[
L(\Delta[n] \times K \xrightarrow{\alpha} N\sC) := \coprod_{c \in im(\alpha^{\prime})} K_c \times  \map_{\sC}(-,c)
\]
where $K_c$ is the fiber of $\alpha^\prime : K \to N\sC_0$ over the object $c$ in the image of $\alpha^{\prime}$. Note that when $K = *$, $L(\alpha : \Delta[n] \xrightarrow{} N\sC)$ is the representable functor $\map_{\sC}(-,c_0)$ where $c_0$ is the ultimate target object of $\alpha = (c_0 \gets \dots \gets c_n)$.

 It is a mechanical verification using the Yoneda lemma that
\[
\Hom( L(\alpha), F) \cong \map_{N\sC}(K \times \Delta[n], \sG(F))
\]
for every functor $F$. This settles the claim that $(L,\sG)$ form a simplicial adjunction.
Clearly, $\sG$ preserves fibrations and weak equivalences, and so the pair is Quillen.
\end{proof}

\begin{rem}
In fact, $\sG$ also has a right adjoint. Let $c$ be an object of $\sC$ and $\sC/{c}$ denote the overcategory (this is an internal category, with \emph{space} of objects consisting of maps with target $c$). Let $N(\sC/c) \rightarrow N(\sC)$ be the map induced by the forgetful functor $\sC/c \rightarrow \sC$. The functor $R : \SSp_{/N\sC} \to \PSh(\sC)$ which to $X \in \SSp_{/N \sC}$ associates the presheaf $R(X)$ defined by
\[
R(X)(c) := \map_{N\sC}(N(\sC/c),X)
\]
is the (simplicial) right adjoint to $\sG$.

\end{rem}

The substance of theorem A is encompassed in the following step.
\begin{lem}\label{lem:unitGC}

For each object $X \to N\sC$, the derived unit map $X \to \RR \sG \LL L(X)$ is a weak equivalence in the right fibration model structure on the category of simplicial spaces over $N\sC$.
\end{lem}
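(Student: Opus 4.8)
The plan is to reduce to the generators $\Delta[n] \times K \to N\sC$ and there identify the derived unit with the Yoneda equivalence of Lemma \ref{lem:yoneda}. Every object $X \to N\sC$ is the colimit of the bar presentation (\ref{eq:SSpcoend}) by objects of the form $\Delta[n] \times X_m \to N\sC$; after a levelwise cofibrant replacement this exhibits $X$ as a homotopy colimit of generators. The identity functor preserves homotopy colimits, and $\LL L$ does as a left derived functor, so the reduction follows once I know that $\RR\sG$ preserves the homotopy colimits occurring in (\ref{eq:SSpcoend}). Here I would use that $\sG$ carries weak equivalences to weak equivalences and, by the remark following Proposition \ref{prop:leftadjointGC}, possesses a right adjoint $R$ in addition to its left adjoint $L$: a functor that preserves weak equivalences and commutes with colimits on the nose preserves homotopy colimits, so $\RR\sG \simeq \sG$ does. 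Since a homotopy colimit of weak equivalences is a weak equivalence, the class of $X$ for which $\eta_X \colon X \to \RR\sG\LL L(X)$ is a weak equivalence in the right fibration model structure is closed under homotopy colimits, and it suffices to treat the generators.

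For the generator computation, consider first $K = \ast$, so $X = \Delta[n] \xrightarrow{\sigma} N\sC$ is a single string $c_0 \gets \dots \gets c_n$ with ultimate target $c_0$. Since the generator is cofibrant, $\LL L(X) \simeq L(X) = \map_\sC(-, c_0)$ by the formula in Proposition \ref{prop:leftadjointGC}, and applying the Grothendieck construction gives $\sG(\map_\sC(-,c_0)) = N(\sC/c_0)$, the nerve of the overcategory, which is a right fibration over $N\sC$ with terminal object $\mathrm{id}_{c_0}$. Tracing through the adjunction isomorphism $\Hom(L(\sigma), F) \cong \map_{N\sC}(\Delta[n], \sG F)$, the unit $\eta_X \colon \Delta[n] \to N(\sC/c_0)$ is the $n$-simplex $(c_0 \xrightarrow{\mathrm{id}} c_0) \gets (c_1 \to c_0) \gets \dots \gets (c_n \to c_0)$ whose initial (target) vertex is the terminal object $\mathrm{id}_{c_0}$.

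To conclude I would apply two-out-of-three to the triangle
\[
\Delta[0] \xrightarrow{\alpha_n} \Delta[n] \xrightarrow{\eta_X} N(\sC/c_0),
\]
whose composite is the inclusion $\Delta[0] \to N(\sC/c_0)$ selecting $\mathrm{id}_{c_0}$. The slanted leg $\Delta[0] \xrightarrow{\alpha_n} \Delta[n] \xrightarrow{\sigma} N\sC$ is a weak equivalence in the right fibration model structure by the remark rephrasing Proposition \ref{prop:rightfibequiv} (the target vertex of a string), while the composite selecting the terminal object $\mathrm{id}_{c_0}$ is a weak equivalence by the Yoneda Lemma \ref{lem:yoneda}; hence $\eta_X$ is a weak equivalence over $N\sC$. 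The case of a general space $K$ then follows by the parametrized Yoneda lemma (Remark \ref{lem:yonedaK}), or by a further homotopy-colimit reduction that writes $\Delta[n]\times K$ as a $K$-indexed homotopy colimit and decomposes $\sG L(\Delta[n]\times K) \simeq \coprod_c K_c \times N(\sC/c)$ along the fibers of the target map $\alpha' \colon K \to N\sC_0$. I expect the main obstacle to be the formal step in the reduction, namely verifying that $\RR\sG$ preserves the relevant homotopy colimits, rather than the generator computation, which is essentially a restatement of the Yoneda lemma; the cleanest route is the two-sided adjointness of $\sG$ together with its homotopy-invariance.
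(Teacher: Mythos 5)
Your overall strategy is the same as the paper's: resolve $X$ by the bar construction (\ref{eq:SSpcoend}) to reduce to representables, and at a representable identify the unit with the Yoneda equivalence. Your generator computation is correct and matches the paper's (the paper first contracts $\Delta[n]$ to its vertex $0$ and then invokes lemma \ref{lem:terminal} for $\Delta[0]\to N\sC/b$; your two-out-of-three triangle through $\alpha_n$ is the same argument in a slightly different order, and your identification of the unit as the $n$-simplex of $N(\sC/c_0)$ with $0$-th vertex $id_{c_0}$ is right for the paper's Bousfield--Kan conventions).

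The one step that does not hold up as stated is your justification that $\RR\sG$ preserves the relevant homotopy colimits. The principle you invoke --- a functor that preserves weak equivalences and commutes with strict colimits preserves homotopy colimits --- is false in general: having the right adjoint $R$ only gives preservation of colimits on the nose, and preserving weak equivalences only lets you replace the diagram up to objectwise equivalence; what is missing is the assertion that the \emph{strict} colimit of the image diagram still computes the \emph{homotopy} colimit, which requires a cofibrancy statement about $\sG L(Y_\bullet)$, not about $\sG$ abstractly. This is exactly the point the paper supplies: $Y_\bullet$ is Reedy cofibrant, $L$ is left Quillen so $L(Y_\bullet)$ is Reedy cofibrant, and $\sG$ carries projective cofibrations of presheaves to levelwise (injective) cofibrations of simplicial spaces, so $\sG L(Y_\bullet)$ is again Reedy cofibrant and $|\sG L(Y_\bullet)| = \sG L(|Y_\bullet|)$ really is the homotopy realization. (The paper packages this by mapping into a fibrant $Z$ and identifying both sides of (\ref{eq:GCunit}) with a $\textup{Tot}$, together with Ken Brown's lemma to compare $L(X)$ with $L(|Y_\bullet|)$.) So your reduction is repairable, but you should replace the appeal to the two-sided adjointness of $\sG$ by this explicit Reedy cofibrancy check; as written that step is a gap.
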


\begin{proof}[Proof of lemma \ref{lem:unitGC}]
Since $\sG$ preserves weak equivalences, it is enough to show that the (non-derived) unit map $X \to \sG L(X)$ is a weak equivalence for each cofibrant simplicial space $X$. That is, we are required to show that for each fibrant (i.e. right fibration) $Z$, the induced map
\begin{equation}\label{eq:GCunit}
\RR \map_{N\sC}(\sG L(X), Z) \to \RR \map_{N\sC}(X, Z)
\end{equation}
is a weak equivalence of spaces.

 Let $Y_{\bullet}$ be the canonical resolution of $X$ by representables. Namely, $Y$ is a simplicial object in $\SSp_{/N\sC}$ with simplicial space $Y_k$ of $k$-simplices given by
\[
\coprod_{[n_0] \to [n_1] \to \dots \to [n_k]} X_{n_k} \times \Delta[n_0] 
\]
The Yoneda lemma provides a map $Y_{\bullet} \to X$,  where $X$ is viewed as a constant simplicial object in $\SSp_{/N\sC}$. The induced map $|Y_{\bullet}| \to X$ is well known to be a levelwise weak equivalence in $\SSp_{/N\sC}$ (this can be proved by exhibiting a simplicial contracting homotopy).

 We can thus reduce the problem to the case of representables. By Ken Brown's lemma \cite[7.7.2]{Hirschhorn}, the left Quillen functor $L$ preserves weak equivalences between cofibrant objects. So $L(X) \xrightarrow{\simeq} L(|Y_{\bullet}|)$  since $X$ is cofibrant by hypothesis and $|Y_{\bullet}|$ is the geometric realization of a Reedy cofibrant object, hence cofibrant. Therefore,
\[
\RR \map_{N\sC}(\sG L(X), Z) \simeq \RR \map_{N \sC}( | \sG L(Y_{\bullet}) |, Z) \simeq \textup{Tot} \, \map_{N \sC}(\sG L(Y_{\bullet}) ,Z)
\]
The first equivalence is a consequence of the fact that both $L$ and $\sG$ commute with geometric realization (and colimits in general, since both are left adjoints). In the second equivalence, we have used  that $| \sG L(Y_{\bullet}) |$ is cofibrant since it is the geometric realization of a Reedy cofibrant object. Similarly, $\RR \map_{N\sC}(X, Z) \simeq \textup{Tot} \, \map_{N \sC}(Y_{\bullet} ,Z)$. 

 The upshot is that it suffices to check that (\ref{eq:GCunit}) is a weak equivalence for $X$ of the form $f : \Delta[n] \to N\sC$ corresponding to some string of $n$ morphisms. Because the map $\Delta[0] \to \Delta[n]$ (selecting the first vertex $0$) is a weak equivalence in the right fibration model structure, it suffices to prove the claim for $X = \Delta[0] \to N\sC$ corresponding to an object $b \in N\sC$. This verification occupies the remainder of the proof.

 When $X = (\Delta[0] \xrightarrow{b} N\sC)$, the target of (\ref{eq:GCunit}) is
$\RR \map_{N\sC}(\Delta[0], Z) \simeq Z_b$. We now investigate the source. Recall that $L(f) = \map(-,b)$. Therefore,  $\sG L(f) = N \sC /b$. The inclusion
\[
\Delta[0] \xrightarrow{id_{b}} N \sC /b
\]
over $\{b\} \in N \sC_0$, induces a map
\begin{equation}\label{eq:yon}
\RR \map_{N\sC}(N\sC/b, Z) \to \RR \map_{N \sC}(\{b\},Z) = Z_{b}
\end{equation}
which agrees with $(\ref{eq:GCunit})$. Clearly $\{id_{b} \}$ is a terminal object of $N\sC/b$. This implies, by proposition \ref{lem:terminal}, that (\ref{eq:yon}) is a weak equivalence.
\end{proof}

Moreover, if $X$ is a right fibration over $N \sC$, then we immediately obtain the strictification result that $X$ is levelwise equivalent to an actual (\emph{strict}) presheaf:

\begin{cor}
Suppose $X$ is a right fibration over $N\sC$ and $X$ is cofibrant as a simplicial space. Then the unit map $X \to \sG L(X)$ is \emph{levelwise} weak equivalence.
\end{cor}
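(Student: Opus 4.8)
The plan is to upgrade the weak equivalence furnished by Lemma \ref{lem:unitGC} from the right fibration model structure to a genuinely levelwise statement, exploiting that both the source and the target of the unit map are right fibrations together with the homotopy-cartesian characterization of Proposition \ref{prop:rightfibequiv}. First I would record that the unit map $u : X \to \sG L(X)$ is a weak equivalence in the right fibration model structure over $B := N\sC$; this is precisely the content of Lemma \ref{lem:unitGC}, whose proof in fact establishes the assertion for the \emph{non-derived} unit once $X$ is cofibrant, which holds by hypothesis. Next I observe that both ends of $u$ are right fibrations: the target $\sG L(X)$ is a Grothendieck construction and hence a right fibration by construction, while $X$ is a right fibration by assumption. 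By the characterization of weak equivalences between right fibrations in Proposition \ref{prop:rfibmodel}, such a weak equivalence is detected on $0$-simplices, so the map $u_0 : X_0 \to \sG L(X)_0$ is a weak equivalence of spaces.

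It then remains to promote this to all simplicial degrees, and for that I would invoke Proposition \ref{prop:rightfibequiv}(2). Since $B = N\sC$ is a Segal space (Example \ref{ex:segal}), for any right fibration $W \to B$ and each $n \ge 1$ the square relating $W_n, W_0, B_n, B_0$ along $a_n^*$ is homotopy cartesian, giving a natural weak equivalence $W_n \simeq W_0 \times^h_{B_0} B_n$. Applying this to both $W = X$ and $W = \sG L(X)$, and using that $u$ is a map \emph{over} $B$ (so it commutes with the maps $a_n^*$ and with the reference projections to $B_n$ and $B_0$), the map $u_n : X_n \to \sG L(X)_n$ is identified, up to these equivalences, with the map of homotopy pullbacks
\[
X_0 \times^h_{B_0} B_n \longrightarrow \sG L(X)_0 \times^h_{B_0} B_n
\]
induced by $u_0$ on the first factor and by the identity on $B_n$ and $B_0$. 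Because homotopy pullbacks preserve weak equivalences and $u_0$ is a weak equivalence by the previous paragraph, the induced map of homotopy pullbacks is a weak equivalence, whence $u_n$ is a weak equivalence for every $n$. This is exactly the claim that $u$ is a levelwise weak equivalence.

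The only point requiring care is the naturality underpinning the middle step: one must check that the equivalences $W_n \simeq W_0 \times^h_{B_0} B_n$ of Proposition \ref{prop:rightfibequiv} assemble into a commuting square under $u$, so that $u_n$ really is the map of homotopy pullbacks induced by $u_0$. I expect this compatibility to be the main (though minor) obstacle. It is, however, automatic: the equivalences are constructed solely from the structure maps $a_n^*$ and the reference maps to $B$, all of which $u$ respects by virtue of being a morphism over $B$, so no extra coherence argument is needed beyond the functoriality of the homotopy pullback. Everything else is a direct application of Lemma \ref{lem:unitGC} and Propositions \ref{prop:rfibmodel} and \ref{prop:rightfibequiv}.
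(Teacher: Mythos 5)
Your argument is correct, and it reaches the conclusion by a more explicit route than the paper. The paper's own proof is a one-liner: since the right fibration model structure is a left Bousfield localization of the levelwise structure and both $X$ and $\sG L(X)$ are right fibrations (i.e.\ local objects, up to levelwise fibrant replacement), a weak equivalence between them in the localized structure is automatically a weak equivalence in the underlying levelwise structure. You instead first extract the degree-zero statement from the characterization of weak equivalences between right fibrations in Proposition \ref{prop:rfibmodel}, and then propagate it to all simplicial degrees by hand, using the homotopy-cartesian squares $W_n \simeq W_0 \times^h_{B_0} B_n$ of Proposition \ref{prop:rightfibequiv}(2) together with homotopy invariance of homotopy pullbacks. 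What your route buys is self-containedness: it avoids any appeal to the general theory of Bousfield localizations and also sidesteps the (mildly fussy) question of whether $X$ and $\sG L(X)$ are genuinely fibrant, i.e.\ levelwise fibrations over $N\sC$, since everything is phrased in terms of homotopy pullbacks. What the paper's route buys is brevity and the fact that it does not pass through degree zero at all. The only point to be careful about in your version is that the degree-zero step uses the converse direction of the characterization in Proposition \ref{prop:rfibmodel} (a local weak equivalence between right fibrations \emph{is} an equivalence on $0$-simplices); this is implicit in the phrase ``uniquely characterized by'' and is harmless, but it is worth flagging that it is not literally the direction stated there. The naturality concern you raise at the end is indeed automatic, exactly for the reason you give.
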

\begin{proof}
Weak equivalences between fibrant objects in $\SSp_{/N\sC}$ are determined levelwise, by proposition \ref{prop:rfibmodel}.
\end{proof}

\begin{proof}[Proof of theorem A]
We have already established that $(L, \sG)$ forms a Quillen pair. The unit of the induced adjunction on homotopy categories
\begin{equation}\label{eq:HoGC}
\mathsf{Ho} L : \mathsf{Ho}(\SSp_{/N\sC}) \leftrightarrows \mathsf{Ho}(\Fun(\sC^{\op}, \Sp)) : \mathsf{Ho} \sG
\end{equation}
is an isomorphism by lemma \ref{lem:unitGC}. Moreover, $\sG$ reflects weak equivalences between fibrant objects. It follows that the counit is also an isomorphism.
\end{proof}

\subsection{Explicit description of the left adjoint}

\begin{prop} There is a weak equivalence
\[
\hocolimsub{\Delta} N(c/\sC) \times_{N\sC} X \to L(X)(c)
\]
natural in $c \in \sC$. Here $c/\sC$ is the internal (under)category with space of objects consisting of maps in $\sC$ with source $c$. The left-hand side is clearly functorial in $\sC$ since a map $c \to d$ in $\sC$ induces a map of nerves $N(d/\sC) \rightarrow N(c/\sC)$.
\end{prop}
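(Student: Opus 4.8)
The goal is to identify the left adjoint $L$ explicitly on objects, showing that $L(X)(c)$ is computed by the displayed homotopy colimit. The plan is to use the resolution machinery already developed in the proof of Lemma \ref{lem:unitGC} together with the Yoneda-type computation of $L$ on representables from Proposition \ref{prop:leftadjointGC}.

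First I would reduce to representables using the canonical resolution $Y_\bullet \to X$ by representables, exactly as in the proof of Lemma \ref{lem:unitGC}. Since $L$ is a left adjoint it commutes with geometric realization, so $L(X) \simeq |L(Y_\bullet)|$, and it suffices to verify the formula when $X = (\Delta[n] \times K \xrightarrow{\alpha} N\sC)$ is a representable (times a space $K$). On such objects Proposition \ref{prop:leftadjointGC} gives $L(\alpha) = \coprod_{c' \in im(\alpha')} K_{c'} \times \map_\sC(-, c')$, where $\alpha'$ selects the ultimate target. The right-hand side of the claimed formula should be evaluated on the same object: one checks that $N(c/\sC) \times_{N\sC} (\Delta[n] \times K)$ is, after passing to the homotopy colimit over $\Delta$, weakly equivalent to $\coprod_{c'} K_{c'} \times \map_\sC(c, c')$. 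The key point is that the simplicial space $N(c/\sC) \times_{N\sC} \Delta[n]$ (before multiplying by $K$) has its homotopy colimit over $\Delta$ equivalent to $\map_\sC(c, c_0)$, where $c_0$ is the ultimate target of the string $\alpha$; this is the homotopical content of the $n=0$, $K=*$ case, which amounts to the statement that the homotopy colimit of $N(c/\sC) \times_{N\sC} \Delta[0]_b \simeq N(c/\sC) \times_{N\sC} \{b\}$ computes the mapping space $\map_\sC(c,b)$.

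The main obstacle, and the step deserving the most care, is the compatibility of the homotopy colimit formula with geometric realization, i.e. checking that the two-sided bar-type construction $\hocolim_\Delta N(c/\sC) \times_{N\sC} X$ commutes with the realization $X \simeq |Y_\bullet|$ and that this agrees with the $|L(Y_\bullet)|$ arising on the $L$ side. Both sides are homotopy colimits, so the verification is a matter of interchanging two homotopy colimits (one over $\Delta$ coming from the resolution, one over $\Delta$ coming from the bar construction) and confirming the natural transformation built from the unit is an equivalence after this interchange. Naturality in $c$ is then essentially formal: a map $c \to d$ induces $N(d/\sC) \to N(c/\sC)$ over $N\sC$, hence a map of the homotopy colimits, and one checks it matches the functoriality of $L(X)(c)$ coming from precomposition of representables $\map_\sC(-,c') $ along $\map_\sC(-, c) \to \map_\sC(-, d)$.

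I would organize the argument as: (i) state the natural comparison map, defined levelwise on the bar construction by the counit of $(L, \sG)$ or equivalently by the evident map $N(c/\sC)\times_{N\sC} X \to N(c/\sC)\times_{N\sC}\sG L(X)$ followed by evaluation; (ii) reduce to representables via $Y_\bullet$; (iii) do the representable computation using Proposition \ref{prop:leftadjointGC} and the identification of $\hocolim_\Delta N(c/\sC)\times_{N\sC}\{b\}$ with $\map_\sC(c,b)$; (iv) reassemble via the interchange of homotopy colimits to conclude the comparison map is a weak equivalence on all of $X$, with naturality in $c$ following from functoriality of each ingredient.
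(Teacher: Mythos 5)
Your overall architecture — define the comparison map, reduce to representables via the canonical resolution $Y_{\bullet}$ and an interchange of homotopy colimits, then compute on representables — is the same as the paper's, which simply asserts that it suffices to treat $X = \Delta[n] \to N\sC$ and then does the representable computation. The gap is precisely at that computation, which is the mathematical heart of the proposition. You claim that the identification $\hocolimsub{\Delta}\, N(c/\sC) \times_{N\sC} \Delta[n] \simeq \map_{\sC}(c,c_0)$ (with $c_0$ the ultimate target) ``is the homotopical content of the $n=0$ case,'' but it is not: for $n=0$ the pullback is the \emph{constant} simplicial space on $\map_{\sC}(c,b)$ and the statement is trivial, whereas for $n \geq 1$ the pullback in simplicial degree $k$ is $\coprod_{0 \leq i_0 \leq \dots \leq i_k \leq n} \map_{\sC}(c, b_{i_0})$, and one must actually show that this simplicial space contracts onto the fiber over the target vertex. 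That the vertex inclusion $\Delta[0] \to \Delta[n]$ induces an equivalence after applying $\hocolimsub{\Delta}\, N(c/\sC) \times_{N\sC} (-)$ is exactly what needs proof, and nothing you have set up supplies it. The paper closes this by exhibiting an explicit simplicial contraction (an extra degeneracy) onto $\map_{\sC}(c,c_0)$ using the structure maps $b_{i_j} \to c_0$ of the string; alternatively a cofinality argument would do, but some such argument must be given. Everything else in your outline (the reduction via $Y_{\bullet}$, using that $L$ and the pullback both commute with colimits, and the naturality in $c$) is sound and, if anything, more careful than the paper's own write-up.
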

\begin{proof}
It suffices to consider the case when $X = \Delta[n] \to N\sC$.
For each $c \in \sC$, the simplicial space $N(c/\sC) \times_{N\sC} \Delta[n]$ is equivalent to one with $k$-simplices given by
\[
\coprod_{0 \leq i_0 \leq i_1 \leq \dots \leq i_k \leq n} \map_{\sC}(c, b_{i_0})
\]
where the face maps are given by composition and the degeneracy are obvious. This has a simplicial contraction onto $\map_{\sC}(c,b_n)$, using the maps $b_{i_j} \to b_n$.
\end{proof}

\subsection{Internal version}
There exists a variant of theorem A for category objects in $\Sp$ (alias internal categories in spaces). 

\begin{defn} Let $\sC$ be an internal category in spaces. An internal (contravariant) functor $F : \sC^{op} \to \Sp$ 
(also called \emph{discrete fibration}) consists of an internal category $\sF$ together with a functor $\sF \to \sC$ of internal categories (i.e., a simplicial map between nerves) with the property that the square 
\[
	\begin{tikzpicture}[descr/.style={fill=white}] 
	\matrix(m)[matrix of math nodes, row sep=2.5em, column sep=2.5em, 
	text height=1.5ex, text depth=0.25ex] 
	{
	\sF_1 & \sF_0 \\
	\sC_1 & \sC_0 \\
	}; 
	\path[->,font=\scriptsize] 
		(m-1-1) edge node [auto] {$\mbox{target}$} (m-1-2);
	\path[->,font=\scriptsize] 
		(m-2-1) edge node [auto] {$\mbox{target}$} (m-2-2);
	\path[->,font=\scriptsize] 
		(m-1-1) edge node [left] {} (m-2-1);
	\path[->,font=\scriptsize] 		
		(m-1-2) edge node [auto] {} (m-2-2);
	\end{tikzpicture} 
\]
is cartesian (alias a pullback). The subscript $1$ stands for space of morphisms and the subscript $0$ stands for space of objects. 
\end{defn}

 We adopt the same notation as before and denote by $\Fun(\sC^{op}, \Sp)$ the (internal) category of internal functors $\sC^{op} \to \Sp$. Horel has proved in \cite{Horel} -- under the relatively harmless condition that the target map $\sC_1 \to \sC_0$ is a fibration in $\Sp$ -- that the category $\Fun(\sC^{op}, \Sp)$ has a projective model structure in which a natural transformation $\sF \to \sF^{\prime}$ over $\sC$ is a weak equivalence (resp. fibration) if the map $\sF_0 \to \sF^{\prime}_0$ is a weak equivalence (resp. fibration) in $\Sp$. 

\medskip
The promised mild extension of theorem $A$ is
\begin{thm} Let $\sC$ be an internal category in $\Sp$ such that the target map $\sC_1 \to \sC_0$ is a fibration. Then
\[
L : \SSp_{/N\sC} \leftrightarrows \Fun(\sC^{op}, \Sp) : \sG
\]
is a simplicial Quillen equivalence, where the left-hand side is equipped with the right fibration model structure. Here $\sG$ assigns to an internal functor $\sF \to \sC$ the map of simplicial spaces $N\sF \to N\sC$ given by the nerve.
\end{thm}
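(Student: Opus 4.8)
The plan is to follow the proof of theorem A almost verbatim, replacing the coproducts $\coprod_{c_0,\dots,c_n}$ that index the enriched Grothendieck construction by the appropriate iterated fibre products over the object space $\sC_0$, and replacing the single-object Yoneda argument of lemma \ref{lem:unitGC} by its parametrized counterpart from remark \ref{lem:yonedaK}. The hypothesis that the target map $\sC_1 \to \sC_0$ is a fibration is used at three points: it guarantees, via example \ref{ex:segal}, that $N\sC$ is a Segal space; it is exactly Horel's hypothesis for the existence of the projective model structure on $\Fun(\sC^{\op},\Sp)$; and it ensures that the strict pullbacks expressing the discrete-fibration condition compute homotopy pullbacks. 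Note in particular that for a discrete fibration $\sF \to \sC$ the square forces $\sF_1 \simeq \sC_1 \times_{\sC_0} \sF_0$, so the target map $\sF_1 \to \sF_0$ is again a fibration and $N\sF$ is itself a Segal space.

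First I would set up the simplicial adjunction $(L,\sG)$ and check that it is Quillen. On the right, $\sG$ sends a discrete fibration $\sF \to \sC$ to the nerve map $N\sF \to N\sC$; the defining (strict) pullback square for a discrete fibration is, since the target map is a fibration, a homotopy pullback, and this is precisely the right-fibration square (\ref{eq:rfibsquare}) for $N\sF \to N\sC$ (the higher conditions of proposition \ref{prop:rightfibequiv} then follow by the same iterated-pullback bookkeeping that defines the nerve). Because both fibrations and weak equivalences in Horel's projective structure are detected on the object spaces, and since $N\sF_0 = \sF_0$ while $N\sC_0 = \sC_0$, the functor $\sG$ preserves fibrations and weak equivalences, so the pair is Quillen as soon as $L$ is exhibited as its left adjoint. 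The left adjoint $L$ is built exactly as in proposition \ref{prop:leftadjointGC}: it is determined by colimit preservation together with its value on representables $\alpha : \Delta[n]\times K \to N\sC$, where now the ``sum over the image of the ultimate-target map $\alpha'$'' is reinterpreted as the fibre product of the internal representable with $K$ over $\sC_0$. The adjunction identity $\Hom(L(\alpha),\sF)\cong \map_{N\sC}(K\times\Delta[n],N\sF)$ is again a mechanical consequence of the internal Yoneda lemma.

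The heart of the argument is that the derived unit $X \to \RR\sG\,\LL L(X)$ is a weak equivalence in the right fibration model structure, and here I would reproduce lemma \ref{lem:unitGC} step by step. The canonical resolution $Y_\bullet$ of $X$ by representables is purely simplicial, hence insensitive to the internal structure; its realization maps to $X$ by a levelwise equivalence, and since both $L$ and $\sG$ commute with geometric realization the problem reduces to objects of the form $\Delta[n]\times K \to N\sC$. Using that $\Delta[0]\to\Delta[n]$ is a weak equivalence in the right fibration model structure, this reduces further to $\Delta[0]\times K \to N\sC$, that is, to a map $\alpha' : K \to \sC_0$. For such an object $\sG L(\Delta[0]\times K)$ is the parametrized overcategory $N\sC/K$, and the derived unit is the inclusion $K\times\Delta[0]\to N\sC/K$ selecting the identities; this is exactly the map shown to be a weak equivalence in the right fibration model structure by the parametrized Yoneda lemma of remark \ref{lem:yonedaK} (applied with the terminal subspace of identities).

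Finally, the Quillen equivalence follows formally: the derived unit is a weak equivalence by the previous step, and $\sG$ reflects weak equivalences between fibrant objects (both are detected on object spaces, which agree under the nerve, cf. proposition \ref{prop:rfibmodel}), so the derived counit is a weak equivalence as well. The step I expect to be the genuine obstacle is precisely this reduction to the parametrized case: unlike in the enriched setting one cannot isolate a single object $b \in \sC_0$ and argue fibrewise, so the whole argument must be carried out uniformly over the object space $\sC_0$. It is for exactly this reason that the parametrized forms of the terminal-object and Yoneda lemmas were isolated in remark \ref{lem:yonedaK}, and verifying that $\sG L(\Delta[0]\times K)$ really is the parametrized overcategory $N\sC/K$ --- rather than merely a fibrewise family of overcategories --- is the point that requires the most care.
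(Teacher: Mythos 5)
Your proposal is correct and follows essentially the same route as the paper: the left adjoint is rebuilt with fibre products over $\sC_0$ in place of coproducts, the target-map fibration hypothesis is used to make $\sG$ Quillen and to make the strict discrete-fibration pullbacks homotopy pullbacks, and the unit map is checked after reduction to objects $\Delta[0]\times K \to N\sC$ via the parametrized Yoneda lemma of remark \ref{lem:yonedaK}. The paper's own proof is terser at the final step (it simply notes both mapping spaces are equivalent to $\RR\map_{\sC_0}(K,Z_0)$), but the identification of $\sG L(\Delta[0]\times K)$ with $N\sC/K$ that you flag as the delicate point is exactly the content of that step.
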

\begin{proof}
We need to modify slightly the construction of the left adjoint $L$ from the proof of proposition \ref{prop:leftadjointGC}. For an object $\alpha : \Delta[n] \times K \to N\sC$, declare $L(\alpha)$ to be the category with object space $L(\alpha)_0$ \,:
\[
L(\alpha)_0 := \textup{pullback} (\sC_1 \xrightarrow{\textup{target}} \sC_0 \gets K)
\]
where the right-hand map is the composition $K \to \sC_n \to \sC_0$ of $\alpha$ with the ultimate target operator. The morphism space $L(\alpha)_1$ is defined as $\sC_1 \times_{\sC_0} L(\alpha)_0$.  The source map
\[
L(\alpha)_1 \to L(\alpha)_0
\]
is induced by the composition map $\sC_1 \times_{\sC_0} \sC_1 \to \sC_1$ in $\sC$. The target map is the projection. The forgetful map $L(\alpha) \to N\sC$ gives an internal functor $\sC^{op} \to \Sp$. We leave to the reader the verification that $L$ as prescribed above (and extended to all simplicial spaces by colimits) is the left adjoint to $\sG$.

 The condition that the target map $\sC_1 \to \sC_0$ is a fibration guarantees that $\sG$ preserves fibrant objects and, more generally, fibrations. It is equally clear that $\sG$ preserves weak equivalences. So the pair $(L, \sG)$ is Quillen.

 The argument for the enriched case goes through mostly without substantial changes to show that $(L, \sG)$ is a Quillen equivalence. The key step is to show that, for an object $\alpha : \Delta[0] \times K \to N\sC$ with $K$ a cofibrant space, the unit map
\[
\alpha \to \sG(L(\alpha))
\]
is a weak equivalence in the right fibration model structure on $\SSp_{/N\sC}$. In other words, that for every right fibration $Z \to N\sC$, the induced map
\[
\RR \map_{N\sC}(\sG(L(\alpha)), Z) \to \RR \map_{N\sC}(\alpha, Z)
\]
is a weak equivalence. This can be proved as before; but now both sides are weakly equivalent to $\RR\map_{\sC_0}(K, Z_0)$. For the left-hand side, this follows from lemma \ref{lem:yoneda} and remark \ref{lem:yonedaK}.
\end{proof}

\section{Segal objects in a $\sV$-category}\label{sec:general}

 In this section, we discuss a generalization of the previous constructions, by considering Segal objects in a category $\sV$.

 To conform with model categorical setups, we need to impose a set of technical conditions on $\sV$. We assume that $\sV$ is a monoidal category with respect to the cartesian product and that it is a cartesian closed model category (see e.g. \cite[Ch. 4]{Hovey}, \cite[2.5]{Rezk}). We denote the unit (i.e. the terminal object) by $*$, and we assume it is a cofibrant object.  Finally, we assume that $\sV$ is left proper and combinatorial.

The category $s\sV$ of simplicial objects in $\sV$ is tensored and cotensored over $\sV$. For $X \in s\sV$ and $K \in \sV$, the tensor $X \times K$ is given by $[n] \mapsto X_n \times K$, and the cotensor $X^K$ is given by $[n] \mapsto \map(K,X_n)$, where $\map$ here refers to the internal mapping object in $\sV$. It follows that $s\sV$ becomes a $\sV$-model category (see \cite[4.2.18]{Hovey} for a definition) for the Reedy model structure, and is left proper and combinatorial (from the assumptions we made on $\sV$).
\medskip

 The obvious definition works:

\begin{defn} A \emph{Segal object} in $\sV$ is a functor $X : \Delta^{op} \to \sV$ such that
\[
X_n \to X_1 \times^{h}_{X_0} \dots \times^{h}_{X_0} X_1
\]
is a weak equivalence in $\sV$.
\end{defn}

\begin{defn} Let $Z$ be a Segal object in $\sV$. A map $X \to Z$ in $s\sV$ is said to be a \textbf{right fibration} if $X$ is a Segal object in $\sV$ and the square analogous to (\ref{eq:rfibsquare}) is homotopy cartesian.
\end{defn}

 The technical assumptions on $\sV$ made above guarantee the existence of enriched left Bousfield localizations (see \cite{Barwick}). So we can define a model category structure on $s\sV_{/B}$, the category of simplicial objects in $\sV$ over $B$, whose homotopy theory is that of right fibrations over $B$. 

\begin{prop}\label{prop:rfibmodelV}
Fix a Segal object $B$. There is a left proper, $\sV$-model structure structure on $s\sV_{/B}$ -- called the \textbf{right fibration model structure} -- uniquely characterized by the following properties:
\begin{itemize}
\item An object $X \to B$ is fibrant if it is a right fibration and a fibration in $s\sV$
\item A morphism $X \to Y$ between right fibrations over $B$ is a weak equivalence if $X_0 \to Y_0$ is a weak equivalence
\item A morphism is a cofibration if it is a cofibration in $s\sV$.
\end{itemize}
More generally, a morphism $f : X \to Y$ between any two simplicial objects over $B$ is a weak equivalence if the induced map on derived mapping objects
\[
\RR \map_B(f, W) : \RR \map_B(Y, W) \to \RR \map_B(X, W)
\]
(computed with respect to the levelwise model structure) is a weak equivalence in $\sV$ for every fibrant object $W \to B$.
\end{prop}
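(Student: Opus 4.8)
The plan is to reproduce, in the enriched setting, the two-step construction used for simplicial spaces in the proof of Proposition \ref{prop:rfibmodel}: build the Segal object model structure on $s\sV$ by an enriched left Bousfield localization of the Reedy structure, pass to the slice over $B$, and then perform a second enriched localization enforcing the right fibration condition. The existence of these localizations is exactly what the hypotheses on $\sV$ are for. Since $s\sV$ with the Reedy structure is combinatorial, left proper, and a $\sV$-model category (as recorded just before the definition of Segal objects), Barwick's theory of enriched left Bousfield localization \cite{Barwick} applies and produces a combinatorial, left proper $\sV$-model structure at each stage.

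First I would localize the Reedy structure on $s\sV$ at the (tensored) spine inclusions
\[
(\alpha_0,\dots,\alpha_{n-1}) : \Delta[1] \amalg_{\Delta[0]} \dots \amalg_{\Delta[0]} \Delta[1] \hookrightarrow \Delta[n], \quad n \ge 2
\]
(here $\Delta[n]$ denotes the discrete simplicial object $* \cdot \Delta[n]$), obtaining the Segal object model structure; by the general description of local objects, an object is fibrant there precisely when it is Reedy fibrant and local with respect to these maps, i.e. a Segal object, and the weak equivalences between such objects are the Reedy weak equivalences. Next pass to the slice $s\sV_{/B}$; because $B$ is a Segal object, hence fibrant, a fibrant object $X \to B$ is precisely a levelwise fibration with $X$ a Segal object. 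Finally, localize this slice at the single map (together with its $\sV$-tensors)
\[
\bigl(\Delta[0]\times B_1 \to B\bigr) \xrightarrow{\;d^1\times\mathrm{id}\;} \bigl(\Delta[1]\times B_1 \to B\bigr),
\]
where the counit (evaluation) $\Delta[1]\times B_1 \to B$ supplies the structure map and $d^1$ selects the target vertex. This is the enriched replacement for the indexing set $\{\Delta[0]\xrightarrow{d^1}\Delta[1]\xrightarrow{f}B : f\in B_1\}$ used in the simplicial case.

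It then remains to match the three bullet points. A fibrant object of the final localization is a levelwise fibration $X \to B$, with $X$ Segal, that is in addition local with respect to the displayed map; unwinding $\RR\map_B$ into fibers, locality says exactly that the comparison $X_1 \to X_0 \times^h_{B_0} B_1$ is a weak equivalence, i.e. that the square analogous to (\ref{eq:rfibsquare}) is homotopy cartesian, so the fibrant objects are precisely the right fibrations that are levelwise fibrations. For the weak equivalences, a general property of left Bousfield localizations identifies the equivalences between fibrant objects with the Reedy equivalences between them; the analogue of Proposition \ref{prop:rightfibequiv} then gives natural equivalences $X_n \simeq X_0 \times^h_{B_0} B_n$ and $Y_n \simeq Y_0 \times^h_{B_0} B_n$ over the fixed base $B$, so a map inducing an equivalence $X_0 \to Y_0$ induces equivalences $X_n \to Y_n$ for all $n$. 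The final ``more generally'' clause is then just the defining property of the local equivalences.

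The hard part will be the last paragraph: transporting Proposition \ref{prop:rightfibequiv}, and with it the object-level detection of equivalences, to a general $\sV$. That proposition was proved using right properness of $\Sp$, which is not among our standing assumptions. I would handle this by phrasing the whole argument in terms of homotopy pullbacks computed via fibrant replacement rather than strict pullbacks: the two facts actually used --- commuting homotopy pullbacks, and that a map of homotopy-pullback diagrams which is an equivalence on the variable corner (here $X_0 \to Y_0$, with the $B$-corners fixed) induces an equivalence on homotopy pullbacks --- hold in any model category, so both the characterization and the detection statement survive without right properness. Care is also needed to verify that the single tensored map genuinely encodes the homotopy-cartesian condition for all of $B_1$ in the $\sV$-enriched sense; this is where cartesian closedness of $\sV$ and the expression of $\RR\map_B$ as a fiber of $\RR\map$ enter.
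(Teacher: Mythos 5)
Your overall strategy --- an enriched left Bousfield localization of the sliced Reedy/Segal structure, following the template of Proposition \ref{prop:rfibmodel} --- is exactly what the paper intends (the paper offers essentially no further proof beyond invoking Barwick's existence theorem for enriched localizations), and your remark that Proposition \ref{prop:rightfibequiv} survives without right properness because everything is phrased via homotopy pullbacks is correct. The gap is in your second localization: enriched locality with respect to the single map $(\Delta[0]\times B_1 \to B) \to (\Delta[1]\times B_1 \to B)$ does \emph{not} encode the right fibration condition. Unwinding the enriched hom (and its tensors, which add nothing since $\RR\map_B(K\otimes A, W)\simeq \RR\map\bigl(K,\RR\map_B(A,W)\bigr)$), locality of $W \to B$ says only that the induced map on derived \emph{section objects} over $B_1$, from $W_1 \to B_1$ to $W_0\times^h_{B_0}B_1 \to B_1$, is an equivalence in $\sV$; it does not say that $W_1 \to W_0\times^h_{B_0}B_1$ is an equivalence. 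These differ already for $\sV=\Sp$: take $\sC = (a\to b)$, $B = N\sC$, and $W = N\{b\} \hookrightarrow N\sC$. Then $W_1 \to W_0\times_{B_0}B_1$ is the inclusion of one point into two, so $W$ is not a right fibration, yet both section spaces over $B_1$ are empty and hence equivalent, so $W$ is local for your map (and is a levelwise-fibrant Segal space over $B$). Thus the first bullet of the proposition would fail for the model structure you construct.

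The reason the paper's set $\{\Delta[0]\xrightarrow{d^1}\Delta[1]\xrightarrow{f}B : f\in B_1\}$ works for $\sV=\Sp$ is that a map of spaces over $B_1$ is an equivalence iff it is one on homotopy fibers over the honest points of $B_1$; for general $\sV$ points must be replaced by enough generalized points. Concretely, localize at the set of maps $\Delta[0]\times K \xrightarrow{d^1\times \mathrm{id}} \Delta[1]\times K \to B$ indexed by cofibrant objects $K$ ranging over a set of homotopy generators of $\sV$ (available since $\sV$ is combinatorial) and by all morphisms $K \to B_1$. Locality with respect to these says that $\RR\map(K,W_1)\to \RR\map\bigl(K, W_0\times^h_{B_0}B_1\bigr)$ is an equivalence on homotopy fibers over every vertex of $\RR\map(K,B_1)$ for every such $K$, which does recover the condition that $W_1 \to W_0\times^h_{B_0}B_1$ is an equivalence. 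With this correction the rest of your argument --- the identification of fibrant objects and the detection of weak equivalences on $0$-simplices via the analogue of Proposition \ref{prop:rightfibequiv} --- goes through.
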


We note that, by the assumptions made on $\sV$, the internal mapping object $\map(-,-)$ is a Quillen bifunctor, and so it admits a derived functor $\RR \map$.

\section{Cartesian fibrations}\label{sec:cartfib}
In this section, we take $\sV$ to be Rezk's category of complete Segal spaces $\Caty$.  A Segal object in $\Caty$ is what one might call a double Segal space, that is, a bisimplicial space $X_{m,n}$ such that $X_{\bullet,n}$ is a Segal space (for each fixed $n \geq 0$) and $X_{m,\bullet}$ is a Segal space (for each fixed $m \geq 0$). This is reminiscent of a double category (a category object in categories) not of a $2$-category.

 A Segal space $Z$ is an example of a double Segal space: set $X_{m,n} = Z_m$ for $m,n \geq 0$.

\begin{rem} A double Segal space as defined is not a fibrant object in $s\sV$, the missing condition being that $X_{\bullet,n}$ is complete as a Segal space.
\end{rem}

For the rest of this section, $B$ denotes a given double Segal space.

\begin{defn}\label{defn:cartfib}
A map $X \to B$ of bisimplicial spaces is a \textbf{Cartesian fibration} if $X$ is a double Segal space and the map $$X_{\bullet,n} \to B_{\bullet,n}$$ is a right fibration (of simplicial spaces) for every $n$.
\end{defn}

\begin{rem}\label{rem:cartfib}
A word of warning about terminology: the definition above is not the literal analogue of the classical definition of a Cartesian fibration (or its quasi-categorical version), so it may be argued that it's not fully deserving of that name. For example, definition \ref{defn:cartfib} is one categorical level higher when compared to the standard definition. We decided to keep the potentially conflicting terminology, in part due to theorem B (both perspectives give weak models for functors with values in $\Caty$) and because it is plausible that a direct comparison should exist.
\end{rem}

\subsection{Homotopy theory of Cartesian fibrations}

Because of its importance, we restate proposition \ref{prop:rfibmodelV} for the special case $\sV = \Caty$ below.

\begin{prop}
There is a left proper, $\Caty$-enriched model structure on the category of bisimplicial spaces over $B$ uniquely characterized by the following data.
\begin{itemize}
\item An object $X \to B$ is fibrant if it is a Cartesian fibration and $X_{m,\bullet}$ is a complete Segal space for each $m$.
\item A map $f : X \to Y$ over $B$ is a cofibration if it is so in the underlying model structure on bisimplicial spaces.
\end{itemize}
A map $f : X \to Y$ over $B$ is a weak equivalence if $f_{m, \bullet}$ is a weak equivalence in the complete Segal space model structure for all $m$, and $f_{\bullet, n}$ is a weak equivalence in the right fibration model structure for all $n$.
\end{prop}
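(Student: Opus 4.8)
The plan is to read off each clause of the statement from the abstract characterization of the right fibration model structure on $s\sV_{/B}$ supplied by Proposition \ref{prop:rfibmodelV}, specialized to $\sV = \Caty$. Existence, left properness and the $\Caty$-enrichment are already guaranteed there, so the content is purely a translation of the intrinsic description (``$\sV$-right fibration and fibrant in $s\sV$'', ``cofibration in $s\sV$'', and the derived-mapping-object criterion for weak equivalences) into the concrete bisimplicial language of double Segal spaces. I would treat the three clauses in turn, writing $X_{m,n}$ with $m$ the Segal-object (outer) direction and $n$ the $\Caty$ direction, so that $X_{m,\bullet}$ is the object of $\Caty$ indexed by $[m]$.

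The cofibration clause is immediate, since left Bousfield localization leaves cofibrations unchanged and the cofibrations of the Reedy structure on $s\Caty$ are exactly the cofibrations of bisimplicial spaces in the chosen underlying structure. For the fibrant objects I would unwind ``$\sV$-right fibration and fibrant in $s\sV$''. Reedy fibrancy in $s\Caty$ forces each $X_{m,\bullet}$ to be $\Caty$-fibrant, i.e. a complete Segal space, which is the second condition, the remaining matching-object conditions being subsumed by the Segal condition together with levelwise fibrancy. The $\sV$-right fibration condition says that $X$ is a Segal object in $\Caty$ (equivalently a double Segal space) and that the square analogous to (\ref{eq:rfibsquare}), namely $X_0 \leftarrow X_1$ over $B_0 \leftarrow B_1$, is homotopy cartesian \emph{in} $\Caty$. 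The key point is a lemma I would isolate first: complete Segal spaces are closed under Reedy homotopy pullback (the Segal and completeness conditions are both preserved, the latter because $\RR\map(E,-) \to \RR\map(\Delta[0],-)$ commutes with homotopy pullbacks), so a homotopy pullback in $\Caty$ of complete Segal spaces agrees with the levelwise homotopy pullback of spaces. Applying this to the $d_1$-square (whose corners are complete) shows it is homotopy cartesian in $\Caty$ if and only if, for each $n$, the square $X_{0,n} \leftarrow X_{1,n}$ over $B_{0,n} \leftarrow B_{1,n}$ is homotopy cartesian of spaces, i.e. $X_{\bullet,n} \to B_{\bullet,n}$ is a right fibration for every $n$. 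This is precisely Definition \ref{defn:cartfib}, so the fibrant objects are the Cartesian fibrations with complete fibers.

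For the weak equivalences I would first treat maps between fibrant objects, where Proposition \ref{prop:rfibmodelV} says $f$ is a weak equivalence if and only if $f_{0,\bullet} : X_{0,\bullet} \to Y_{0,\bullet}$ is a $\Caty$-weak equivalence. As source and target are complete Segal spaces, such a Dwyer--Kan equivalence is in fact levelwise, so $f_{0,n}$ is a weak equivalence of spaces for each $n$; since $X_{\bullet,n} \to B_{\bullet,n}$ and $Y_{\bullet,n} \to B_{\bullet,n}$ are right fibrations, this is exactly the assertion that each $f_{\bullet,n}$ is a right fibration weak equivalence. To upgrade to all $(m,n)$ I would invoke Proposition \ref{prop:rightfibequiv}(2), which for a right fibration gives $X_{m,n} \simeq X_{0,n} \times^{h}_{B_{0,n}} B_{m,n}$ and likewise for $Y$; feeding in the equivalences $f_{0,n}$ shows $f_{m,n}$ is a weak equivalence for all $m,n$, hence $f_{m,\bullet}$ is a (levelwise, so complete Segal space) weak equivalence for every $m$. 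Conversely the two displayed conditions contain the case $m=0$, recovering the criterion of Proposition \ref{prop:rfibmodelV}. The general case would then follow by reducing to fibrant objects through a fibrant replacement in the right fibration model structure, checking that both listed conditions are stable under and detected by such replacements.

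The main obstacle is precisely this last reduction, together with the bookkeeping of two interacting localizations: completeness in the $n$-direction and the right fibration localization in the $m$-direction. One must verify that the two-part condition defines a class closed under two-out-of-three which agrees with the localization's weak equivalences on \emph{all} maps, not merely on fibrant ones; the cleanest route is to show that a functorial fibrant replacement $X \to X^{f}$ itself satisfies both conditions, which rests on the levelwise-homotopy-pullback lemma above and on the fact (Proposition \ref{prop:rightfibequiv}) that a right fibration over a fixed base is determined up to levelwise equivalence by its fibers. A secondary point to watch is completeness of the base: the identification of the $\Caty$-homotopy-cartesian square with the fiberwise right fibration condition uses that the corners are complete, so if $B$ is not fiberwise complete one should first replace it, invoking the base-change invariance of the homotopy theory of right fibrations.
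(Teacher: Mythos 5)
Your proposal is correct and follows the same route as the paper, which offers no separate proof: the proposition is presented there simply as the specialization of Proposition \ref{prop:rfibmodelV} to $\sV = \Caty$. Your additional work---in particular the lemma that homotopy pullbacks of complete Segal spaces in $\Caty$ are computed levelwise, which reconciles the abstract right-fibration condition in $\sV$ with Definition \ref{defn:cartfib}, and the fibrant-replacement discussion for non-fibrant objects---supplies translation details that the paper leaves implicit.
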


\begin{rem}
Suppose $B$ is a Segal space (viewed as a double Segal space). Let $X$ and $Y$ be  over $B$, but $X_{n,\bullet}$ and $Y_{n,\bullet}$ not necessarily complete. A map $f : X \to Y$ over $B$ is a weak equivalence if and only if the map
\[
f_{0,\bullet} : X_{0, \bullet} \to Y_{0, \bullet}
\]
is a Dwyer-Kan equivalence or, equivalently, the map
\[
f_{b,\bullet} : X_{b, \bullet} \to Y_{b, \bullet}
\]
is a Dwyer-Kan equivalence for every $b \in B_0$.
\end{rem}

\subsection{Strictification}

Let $F : \sC^{op} \to s\Sp$ be an simplicially enriched functor. Consider $N\sC$ as a bisimplicial space trivially, i.e. ${N(\sC)}_{m,n} = N(\sC)_m$. Form a bisimplicial space $\sG(F)$ with space of $(m,n)$-simplices given by
$$
\sG(F)_{m, n} = \coprod_{(c_0, \dots, c_m)} \map_{\sC}(c_m, c_{m-1}) \times \dots \times \map_{\sC}(c_{1}, c_{0}) \times F(c_0)_n
$$

We call $\sG$ the \emph{Grothendieck construction}. (We point out that this is not the standard usage of the term when applied to functors with values in categories.) There is an adjunction
\[
L: s\SSp_{/N\sC} \leftrightarrows \Fun(\sC^{op}, s\Sp) : \sG 
\]

It is easy to check that
\[
L(\Delta[m,n] \xrightarrow{f} N\sC) = \Delta[n] \times \map_{\sC}(-,c_0)
\]
where $f$ corresponds to a string of morphisms $c_0 \gets \dots \gets c_m$ in $\sC$.

If $F(c)$ is a Segal space for each $c \in \sC$, then $\sG(F) \to N\sC$ is a Cartesian fibration. The pair $(L,\sG)$ lifts to a Quillen pair between the Cartesian model structure on bisimplicial spaces over $N\sC$ and the projective model structure on $\Fun(\sC^{op}, \Caty)$.

\begin{lem}
The right adjoint $\sG$ preserves weak equivalences.
\end{lem}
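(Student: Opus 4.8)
The plan is to reduce the statement to the characterization of weak equivalences in the Cartesian model structure recorded in the remark characterizing them over a Segal base, which applies here because $N\sC$ is a Segal space (the nerve of a simplicially enriched category; see Example \ref{ex:segal}). That characterization says that a map over $N\sC$ is a weak equivalence in the Cartesian model structure if and only if its restriction to $(0,\bullet)$-simplices is a Dwyer-Kan equivalence of Segal spaces. So, given a weak equivalence $F \to F'$ in the projective model structure on $\Fun(\sC^{\op}, \Caty)$ -- that is, a natural transformation with $F(c) \to F'(c)$ a weak equivalence in $\Caty$ for each object $c$ -- it suffices to show that $\sG(F)_{0,\bullet} \to \sG(F')_{0,\bullet}$ is a Dwyer-Kan equivalence.

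First I would identify the source and target of that map. Setting $m = 0$ in the formula for $\sG(F)_{m,n}$, the tuple $(c_0)$ carries no mapping-space factors, so $\sG(F)_{0,n} = \coprod_{c} F(c)_n$ and hence $\sG(F)_{0,\bullet} = \coprod_{c \in \textup{ob}(\sC)} F(c)$, the disjoint union of the Segal spaces $F(c)$ indexed by the (discrete) set of objects of $\sC$. Under this identification the map in question is simply the coproduct $\coprod_c \big( F(c) \to F'(c) \big)$. Each factor $F(c) \to F'(c)$, being a weak equivalence in $\Caty$ between Segal spaces, is a Dwyer-Kan equivalence by Remark \ref{rem:E}, so what remains is to check that a coproduct of Dwyer-Kan equivalences is again a Dwyer-Kan equivalence.

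This last point is where the main care is needed, since a Dwyer-Kan equivalence between non-complete Segal spaces need not be a levelwise equivalence; the argument must therefore be made in terms of mapping spaces and homotopy categories rather than degreewise. Concretely, in a coproduct $\coprod_c F(c)$ two objects lying in distinct summands admit no morphisms between them, so each $\textup{mor}$-space is computed inside a single summand; fully faithfulness of $\coprod_c \big( F(c) \to F'(c) \big)$ thus reduces to that of the individual maps, and essential surjectivity on homotopy categories follows summand by summand. (Equivalently, one may argue model-categorically: the coproduct functor is left Quillen and every object is cofibrant in the injective levelwise model structure underlying $\Caty$, so by Ken Brown's lemma coproducts preserve weak equivalences of complete Segal spaces, which between Segal spaces are exactly the Dwyer-Kan equivalences.) Combining these steps, $\sG(F)_{0,\bullet} \to \sG(F')_{0,\bullet}$ is a Dwyer-Kan equivalence, and the remark then yields that $\sG(F) \to \sG(F')$ is a weak equivalence in the Cartesian model structure. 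The only genuinely delicate point, as noted, is the passage through coproducts, which must avoid the temptation to argue levelwise.
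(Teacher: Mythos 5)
Your argument is correct and is essentially the paper's own (the paper disposes of this lemma in one line, saying it follows from the definition of projective weak equivalences together with the characterization of Cartesian weak equivalences over a Segal base); your identification $\sG(F)_{0,\bullet} = \coprod_c F(c)$ and the check that a coproduct of Dwyer--Kan equivalences is a Dwyer--Kan equivalence simply make that one-liner explicit. Note that the same remark offers the equivalent fiberwise criterion ($f_{b,\bullet}$ a Dwyer--Kan equivalence for each object $b$), under which the fiber over $c$ is exactly $F(c) \to F'(c)$ and the coproduct discussion can be bypassed entirely.
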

\begin{proof}
This follows rather directly from the definitions of a weak equivalence in $\Fun(\sC^{op}, \Caty)$ and the proposition above.
\end{proof}

When $X = \Delta[{m,n}]$, the following is essentially the Yoneda lemma.

\begin{lem}\label{lem:cartfib-unit} For $X$ a cofibrant bisimplicial space and a map $X \to N\sC$, the non-derived unit map
$
X \to \sG L (X)
$ 
(over $N\sC$) is a weak equivalence in the Cartesian fibration model structure.
\end{lem}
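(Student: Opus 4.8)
The plan is to run the proof of Lemma \ref{lem:unitGC} verbatim, now inside the $\sV$-enriched framework of section \ref{sec:general} with $\sV = \Caty$: the Cartesian fibration model structure on $s\SSp_{/N\sC}$ is, by Definition \ref{defn:cartfib}, precisely the right fibration model structure for Segal objects in $\Caty$, so all the formal inputs of Lemma \ref{lem:unitGC} have $\Caty$-enriched analogues. Since the preceding lemma shows $\sG$ preserves weak equivalences, and $L$ preserves weak equivalences between cofibrant objects by Ken Brown, it suffices to show the non-derived unit $X \to \sG L(X)$ is a Cartesian weak equivalence for cofibrant $X$. As before I would resolve $X$ by the canonical simplicial object $Y_\bullet$ whose $k$-simplices form a coproduct of bisimplicial representables $\Delta[m,n]$; the augmentation $|Y_\bullet| \to X$ is a levelwise weak equivalence, and both $L$ and $\sG$ commute with this geometric realization exactly as in the cited proof. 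This reduces the claim to representables $X = \Delta[m,n] \xrightarrow{f} N\sC$ classifying a string $c_0 \gets \dots \gets c_m$.

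The key computation is that $L(\Delta[m,n]) = \Delta[n] \times \map_{\sC}(-,c_0)$ depends only on the fiber coordinate $\Delta[n]$ and on the ultimate target $c_0$; consequently $\sG L(\Delta[m,n]) \cong (N\sC/c_0) \boxtimes \Delta[n]$, the external product which is $N\sC/c_0$ in the base direction and $\Delta[n]$ in the fiber direction. Under the ultimate-target vertex inclusion $\iota : \Delta[0,n] \to \Delta[m,n]$ the functor $L$ is carried to an isomorphism (both values equal $\Delta[n] \times \map_{\sC}(-,c_0)$), so naturality of the unit gives a commuting triangle identifying $\eta_{\Delta[m,n]} \circ \iota$ with $\eta_{\Delta[0,n]}$ up to isomorphism. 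It therefore suffices to prove that $\iota$ and $\eta_{\Delta[0,n]}$ are both Cartesian weak equivalences, after which two-out-of-three finishes the representable case.

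Both of these reduce to a single observation: tensoring by the cofibrant object $\Delta[n] \in \Caty$ is a left Quillen endofunctor of the $\Caty$-enriched Cartesian fibration model structure, and hence, by Ken Brown, sends weak equivalences between cofibrant objects to weak equivalences. Here one identifies the external product $A \boxtimes \Delta[n]$ (for $A$ a simplicial space over $N\sC$, viewed as trivial in the fiber direction) with the enriched tensor $A \otimes \Delta[n]$. The map $\iota$ is then $(\Delta[0] \to \Delta[m]) \otimes \Delta[n]$, where the first-vertex inclusion $\Delta[0] \to \Delta[m]$ over $N\sC$ is a right fibration weak equivalence between cofibrant simplicial spaces, as already used in the proof of Lemma \ref{lem:unitGC}. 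The unit $\eta_{\Delta[0,n]}$ is $g \otimes \Delta[n]$, where $g : \Delta[0] \xrightarrow{id_{c_0}} N\sC/c_0$ is the Yoneda weak equivalence of Lemma \ref{lem:yoneda} (replacing $N\sC/c_0$ by a cofibrant model if necessary). Applying $- \otimes \Delta[n]$ to each produces the desired Cartesian weak equivalences.

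I expect the main subtlety to be the bookkeeping of the two simplicial directions: verifying the identification $\sG L(\Delta[m,n]) \cong (N\sC/c_0)\boxtimes\Delta[n]$ and checking that the base-direction external product $A \boxtimes \Delta[n]$ genuinely coincides with the $\Caty$-enriched tensor $A \otimes \Delta[n]$ by the cofibrant object $\Delta[n]$, so that the left-Quillen and Ken Brown arguments legitimately apply in the fiber direction. Once these identifications are secured, no homotopical input beyond the ordinary right-fibration Yoneda lemma and the first-vertex weak equivalence is needed.
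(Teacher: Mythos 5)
Your overall route coincides with the paper's: resolve $X$ by representables, compute $\sG L(\Delta[m,n]) \cong N(\sC/c_0)\,\Box\,\Delta[n]$, and deduce the claim from the first-vertex inclusion $\Delta[0,n]\to\Delta[m,n]$ together with the Yoneda lemma. The one step that does not work as written is the final upgrade from right-fibration equivalences to Cartesian equivalences. You invoke Ken Brown's lemma for the left Quillen endofunctor $-\otimes\Delta[n]$ of the Cartesian model structure on $s\SSp_{/N\sC}$; but Ken Brown only applies to maps that are already weak equivalences between cofibrant objects \emph{in that model structure}, whereas the maps you feed it --- the first-vertex inclusion $\Delta[0]\to\Delta[m]$ and the Yoneda map $\Delta[0]\to N\sC/c_0$, regarded as bisimplicial spaces constant in the fiber direction --- are only known to be weak equivalences in the right fibration model structure on $\SSp_{/N\sC}$. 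Knowing that their fiber-constant extensions are Cartesian weak equivalences is exactly the $n=0$ instance of what you are trying to prove (note $A\,\Box\,\Delta[0]$ \emph{is} the fiber-constant extension of $A$), so at this point the argument is circular.

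The repair is small and is precisely the step the paper makes explicit: instead of an endofunctor of $s\SSp_{/N\sC}$, consider $A\mapsto A\,\Box\,\Delta[n]$ as a functor from $\SSp_{/N\sC}$ (right fibration model structure) to $s\SSp_{/N\sC}$ (Cartesian model structure). Its right adjoint is $W\mapsto W_{\bullet,n}$, which sends fibrant objects to fibrant objects by the very definition of a Cartesian fibration; equivalently, $\RR\map_{N\sC}(A\,\Box\,\Delta[n],W)\simeq\RR\map_{N\sC}(A,W_{\bullet,n})$ for every Cartesian fibration $W\to N\sC$. This makes $-\,\Box\,\Delta[n]$ left Quillen between the two structures, and then your two input equivalences are carried to Cartesian weak equivalences exactly as you intended. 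Everything else in your write-up --- the reduction to representables, the identification of $\sG L(\Delta[m,n])$, and the naturality triangle identifying the unit at $\Delta[m,n]$ restricted along $\iota$ with the unit at $\Delta[0,n]$ --- is correct and matches the paper's proof.
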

\begin{proof}
Let $X$ be a representable $f : \Delta[{m,n}] \to N\sC$ corresponding to a string $c = c_0 \gets \dots \gets c_m $ of morphisms in $\sC$. Then $L(f) = \Delta[n] \times \map_{\sC}(-,c)$ and $\sG L (f)$ is the \emph{perpendicular} (or box) product $N (\sC/c) \Box \Delta[n]$ of $N(\sC/c)$ and $\Delta[n]$, i.e. the bisimplicial space with space $N(\sC/c)_i \times \Delta[k]_j$ of $(i,j)$-simplices.

 The map $\Delta[{0,n}] \to \Delta[{m,n}]$ (over $N\sC$) that selects the object $c$ is a weak equivalence by definition.  The map $\Delta[{0,n}] \to N (\sC/c) \Box \Delta[n]$ over $N\sC$ is a weak equivalence in the Cartesian fibration model structure as we now explain. Suppose $W \to N\sC$ is a Cartesian fibration. By adjunction, we have
$$
\RR \map_{N\sC}(N(\sC/c) \Box \Delta[n], W) \simeq \RR \map_{N\sC}(N(\sC/c), W_{\bullet, n})
$$
But the right-hand mapping space is weakly equivalent to $\RR\map_{N\sC}(\Delta[{0,n}], W)$ since $W_{\bullet,n}$ is a right fibration over $N\sC$ for every $n$ by assumption. This proves the claim for representable objects. For an arbitrary cofibrant $X$, resolve by representables.
\end{proof}

\begin{proof}[Proof of theorem B]
Since $\sG$ preserves weak equivalences, the derived unit is weakly equivalent to the the non-derived unit for cofibrant objects, which is a weak equivalence by lemma \ref{lem:cartfib-unit}. Moreover, the Grothendieck construction $\sG$ reflects weak equivalences between fibrant objects. Both $ss\Sp_{/N\sC}$ and $\Fun(\sC^{op}, \Caty)$ are enriched model categories over $\Caty$ and the pair $(L,\sG)$ is a $\Caty$-enriched adjunction. Hence, the derived adjunction $(\LL L, \RR \sG)$ is an equivalence of $(\infty,2)$-categories. 
\end{proof}

\begin{rem}
There is a variant of theorem B when $\sC$ is a bicategory.
\end{rem}

\section{(Co)limits for a general $\sV$}\label{sec:colim}

\subsection{Colimits}

Let $C$ be a Segal object in $\sV$ and $p : X \to C$ a right fibration. We define the \emph{homotopy colimit} of $p$ as the homotopy colimit of $X : \Delta^{op} \to \sV$. This is justified by the following.

\begin{prop}\label{prop:hocolim} The homotopy colimit of $p$ is the homotopy left adjoint to the constant diagram functor
\[
s\sV \to s\sV_{/ C}
\]
which sends $Y$ to $Y \times C \to C$. Here both categories are given the right fibration model structure (with underlying projective model structure on $\sV$).
\end{prop}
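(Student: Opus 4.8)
The plan is to recognize the stated pairing as an honest (strict) adjunction and then derive it. First I would observe that the constant diagram functor $R\co s\sV \to s\sV_{/C}$, $Y \mapsto (Y \times C \to C)$, has as its strict left adjoint the forgetful functor $U\co s\sV_{/C}\to s\sV$: a map $X \to Y\times C$ over $C$ is exactly the datum of a map $X \to Y$ of simplicial objects, so $\map_{s\sV}(UX,Y)\cong\map_{s\sV_{/C}}(X,Y\times C)$ naturally in both variables. Here the target $s\sV$ carries the right fibration model structure over the terminal Segal object $*$, whose fibrant objects are the homotopically constant Segal objects, i.e. those $X$ with $d_1\co X_1\to X_0$ a weak equivalence.

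Next I would check that $(U,R)$ is a Quillen pair for the two right fibration model structures. On the underlying projective overcategory model structures $U$ is manifestly left Quillen, since weak equivalences, fibrations and cofibrations in an overcategory are all detected by $U$. Both right fibration structures are left Bousfield localizations of these: $s\sV_{/C}$ localizes at the set $S_C=\{\,\Delta[0]\to\Delta[1]\xrightarrow{f}C : f\in C_1\,\}$ (suitably cofibrantly replaced for the projective structure), while $s\sV=s\sV_{/*}$ localizes at $S_*=\{\,\Delta[0]\to\Delta[1]\,\}$. Since $U$ forgets the map to $C$, it carries each element of $S_C$ to the map $\Delta[0]\to\Delta[1]$ of $S_*$, so the universal property of left Bousfield localization upgrades $U$ to a left Quillen functor between the localized structures. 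Consequently the homotopy left adjoint of $R$ is the total left derived functor $\LL U$.

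Finally I would compute $\LL U$. Because cofibrations, hence cofibrant objects, are unchanged under left Bousfield localization, and projective weak equivalences are in particular right fibration weak equivalences, a projective-cofibrant replacement $QX\xrightarrow{\simeq}X$ in $s\sV_{/C}$ also serves as a cofibrant replacement in the localized structure; in particular $QX\to X$ is a levelwise weak equivalence. Thus $\LL U(X)=U(QX)=QX$. It then remains to identify $\mathsf{Ho}(s\sV)$, for the right fibration structure over $*$, with $\mathsf{Ho}(\sV)$. For this I would use that the constant-diagram inclusion $c\co\sV\to s\sV$ is right Quillen for the projective underlying structure, lands after localization in the homotopically constant (hence fibrant) objects, and has left-derived left adjoint $\hocolimsub{\Delta^{\op}}$; the derived unit $V\to\hocolimsub{\Delta^{\op}}cV\simeq V$ and the derived counit are equivalences because $N\Delta^{\op}$ is contractible, so $c$ is a Quillen equivalence. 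Under the resulting identification the class of $QX$ corresponds to $\hocolimsub{\Delta^{\op}}QX\simeq\hocolimsub{\Delta^{\op}}X$, which is precisely the homotopy colimit of $p$ as defined; chaining the identifications yields $\LL U(X)\simeq\hocolimsub{\Delta^{\op}}X$.

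The main obstacle is the bookkeeping of the two superimposed localizations: one must make sure that the left-derived functor can be computed by a replacement which is cofibrant for the localized structure yet a levelwise (projective) equivalence, so that it commutes with the homotopy colimit, and separately that localizing $s\sV$ at $S_*$ really collapses the whole simplicial direction --- not merely $d^1$ --- to produce a model for $\sV$ via $\hocolimsub{\Delta^{\op}}$. This last point uses the Segal condition on the fibrant objects together with the contractibility of $N\Delta$.
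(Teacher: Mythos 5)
Your argument is correct and follows essentially the same route as the paper: identify the strict left adjoint with the forgetful functor, check that the Quillen pair descends to the localized (right fibration) structures, observe that the local objects over $*$ are the homotopically constant simplicial objects, and use the adjunction $\hocolim \dashv c$ together with contractibility of $N\Delta^{\op}$ to identify the derived left adjoint with $\hocolim$. The one step to tighten is your claim that the derived unit $X \to c(\hocolim X)$ is a local equivalence ``because $N\Delta^{\op}$ is contractible'': contractibility gives the counit $\hocolim\, c V \simeq V$, while for the unit one should test against local objects $Z \simeq c(\hocolim Z)$ and apply the adjunction --- which is precisely the closing chain of $\RR\map$-equivalences in the paper's proof.
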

\begin{proof}
Write $t : C \to *$ for the terminal map and $t^*$ for the constant diagram functor. Then $t^*$ has a left adjoint $t_!$ given by $t_!(X \to C) = X$ and $(t_!, t^*)$ form a Quillen pair for the projective model structure on $s\sV$. Because $t^*$ preserves right fibrations, this pair is also Quillen with respect to the right fibration model structures. Note that, for the right fibration model structure on $s\sV = s\sV_{/*}$, fibrant objects are the homotopically constant simplicial objects (i.e. all maps are weak equivalences).

 Write $c : \sV \to s\sV$ for the constant diagram functor. It remains to show that $t_! (p) = X$ is weakly equivalent to $c(\hocolim X)$ in the right fibration model structure on $s\sV$. For a fibrant (i.e. homotopically constant) simplicial object $Z$, we have that $c(\hocolim Z) \simeq Z$. So the result follows from the chain of weak equivalences:
\[
\begin{array}{rcl}
 \RR \map(X, Z)  &  \simeq & \RR \map (X, c(\hocolim Z))   \\
  &  \simeq &   \RR \map (\hocolim X, \hocolim Z) \\
  &  \simeq & \RR \map(c(\hocolim X), c(\hocolim Z)) \\
\end{array}
\]
where the second equivalence comes from the adjunction between $\hocolim$ and $c$.
\end{proof}

\subsection{Limits}

If $\sV$ is a cartesian closed model category, then $s\sV$ is a cartesian closed model category with the injective model structure. It follows from the definition that, for every cofibrant object $B$, the adjunction
\[
- \times B : s\sV \leftrightarrows s\sV : (-)^B
\]
forms a Quillen pair. This implies that (and is in fact equivalent to) the following pair is also Quillen:
\[
t^* : s\sV \leftrightarrows s\sV_{/B} : t_*
\]
where $t^*(X) = X \times B \to B$ and $t_*(Y \to B) = \map_{A}(A, Y)$, viewed as a constant simplicial object. We then have

\begin{prop} Let $B$ be a (cofibrant) Segal object in $\sV$. The constant diagram functor $t^*$ has a right adjoint
\[
t^* : s\sV \leftrightarrows s\sV_{/B} : t_*
\]
and the pair is Quillen with respect to the right fibration model structure (with underlying injective model structure on $s\sV$). Moreover, $\RR t_*$ evaluated at a right fibration $p : Y \to B$ is identified with the derived sections object $\RR \map_{B}(B,Y)$, i.e. the pullback of
\[
* \to \RR \map(B,B) \leftarrow \RR\map(B,Y)
\]
computed in $\sV$. We call $\RR t_*(p)$ the (homotopy) limit of $p$.
\end{prop}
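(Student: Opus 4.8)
The plan is to take the Quillen pair $(t^*, t_*)$ for the underlying injective model structures as given (this is the content of the paragraph preceding the statement) and to upgrade it to the right fibration localizations. Since both sides are left Bousfield localizations of the injective structure, by the standard criterion for descent of a Quillen pair to Bousfield localizations \cite{Hirschhorn} it suffices to check that $t^*$ carries the generating local map on the source to a weak equivalence in the target. Over the terminal object we have $*_1 = *$, so there is a single generating map, namely $t^*(\Delta[0] \xrightarrow{d^1} \Delta[1])$, i.e. $\Delta[0]\times B \to \Delta[1]\times B$ over $B$. By the ($\sV$-enriched) adjunction $t^* \dashv t_*$, and since over $*$ the local objects are exactly the homotopically constant ones (Proposition \ref{prop:hocolim}), this is equivalent to demanding that $\RR t_* W$ be homotopically constant for every right fibration $W \to B$.

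First I would record the formal identifications. The enriched adjunction gives $(\RR t_* W)_n \simeq \RR\map_B(\Delta[n]\times B, W)$ in $\sV$, so in particular $(\RR t_* W)_0 \simeq \RR\map_B(B, W)$. The simple observation used in the proof of Lemma \ref{lem:terminal} identifies the latter with the homotopy pullback of $* \to \RR\map(B,B) \gets \RR\map(B,W)$, which is the asserted derived sections object. Granting homotopical constancy, $\RR t_* W$ is then equivalent to the constant object on this value, and under the equivalence between the right fibration homotopy theory of $s\sV = s\sV_{/*}$ and $\sV$ (Proposition \ref{prop:hocolim}) this constant object is exactly the claimed homotopy limit; this simultaneously establishes the Quillen claim.

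The heart of the matter is thus the homotopical constancy of $\RR t_* W$, i.e. that for each $n$ the vertex $\Delta[0]\to\Delta[n]$ induces a weak equivalence $\RR\map_B(\Delta[n]\times B, W) \xrightarrow{\sim} \RR\map_B(B, W)$. Using the tensor--cotensor adjunction in $s\sV$, I would rewrite $\RR\map_B(\Delta[n]\times B, W) \simeq \RR\map_B(B, E_n)$, where $E_n := B\times^h_{B^{\Delta[n]}} W^{\Delta[n]}$ is the base change of the cotensor $W^{\Delta[n]} \to B^{\Delta[n]}$ along the constant map $B\to B^{\Delta[n]}$ adjoint to the projection $\Delta[n]\times B\to B$. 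Cotensoring preserves right fibrations, since the defining homotopy-cartesian square is a homotopy-limit condition preserved by $(-)^{\Delta[n]} = \lim_{\Delta[m]\to\Delta[n]}(-)^{\Delta[m]}$ (using that $X^{\Delta[m]}$ is Segal when $X$ is), so $E_n\to B$ is again a right fibration by Proposition \ref{prop:rfibbase}. Evaluation at the vertex $0$ gives a map $E_n\to W$ of right fibrations over $B$, and I claim it is a fiberwise equivalence: the fibre of $E_n$ over $b\in B_0$ is the space of lifts in $W$ of the totally degenerate $n$-simplex $s_0^{\,n}(b)$, an ``identity string'', and because $W\to B$ is a right fibration, hence conservative with essentially unique lifts of identities (the input in the proof of Proposition \ref{prop:fibcss}), every such lift is degenerate up to contractible choice, so this fibre is weakly equivalent to $W_b$. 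A fiberwise equivalence between right fibrations is a weak equivalence in the right fibration model structure and, both objects being fibrant, a levelwise one; as $t_*$ is right Quillen for the injective structure it preserves this, yielding $\RR\map_B(B, E_n) \simeq \RR\map_B(B, W)$ compatibly with the vertex map. This gives homotopical constancy and finishes the argument.

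The step I expect to be the main obstacle is the fibrewise analysis of $E_n$: it is the one genuinely homotopical ingredient, resting on the essential uniqueness of lifts of identity (equivalence) morphisms along a right fibration, and it must be arranged so that $E_n$ is simultaneously a right fibration and levelwise fibrant in order to transport the equivalence through $t_*$. The auxiliary assertion that cotensoring by a simplicial set preserves right fibrations, though expected, likewise deserves a careful verification in the general $\sV$ setting.
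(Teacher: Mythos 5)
The paper offers no argument for this proposition at all: it deduces the Quillen pair for the underlying injective structures from cartesian closedness in the preceding paragraph and then simply asserts the localized statement, so your proposal is supplying a proof rather than competing with one. Your architecture is the right one: descent of the Quillen pair to the localizations is, by adjunction, exactly the statement that $\RR t_* W$ is local over $*$ for every fibrant $W \to B$, and since the localizing set over $*$ consists of the Segal maps \emph{together with} $\Delta[0]\to\Delta[1]$ (not a single map, as you write -- the right fibration structure on $s\sV_{/*}$ is built from the Segal structure), what you actually need is full homotopical constancy of $\RR t_* W$; you do prove constancy for all $n$, which subsumes the Segal maps, so this is only a defect of framing. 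The identification of $(\RR t_* W)_0$ with the derived sections object and the final bookkeeping are fine.

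The genuine gap is the lemma that $W^{\Delta[n]} \to B^{\Delta[n]}$ is a right fibration. It is load-bearing: you need it both to know $E_n \to B$ is a right fibration and to upgrade the fiberwise equivalence $E_n \to W$ to a levelwise one (the implication ``equivalence on $0$-simplices $\Rightarrow$ levelwise equivalence'' is only available for maps between right fibrations). The justification you give is circular: writing $(-)^{\Delta[n]}$ as a homotopy limit of $(-)^{\Delta[m]}$ over the simplices of $\Delta[n]$ presupposes the very statement for each $m \leq n$, and there is no induction that starts it. The lemma is true and your argument can be repaired, but the cleanest fix dissolves $E_n$ altogether: using proposition \ref{prop:rightfibequiv}(2) and the decomposition of $\Delta[m]\times\Delta[n]$ into its shuffle simplices (each containing $(0,0)$ as its vertex $0$), one shows directly that evaluation at $(0,0)$ induces $\RR\map(\Delta[m]\times\Delta[n],W)\simeq \RR\map(\Delta[m]\times\Delta[n],B)\times^h_{B_0}W_0$, whence $(E_n)_m \simeq B_m\times^h_{B_0}W_0\simeq W_m$ levelwise with no fiberwise analysis needed. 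Equivalently, one can resolve $B$ by representables $\Delta[m]\times K$ and check that $\{0\}\times\Delta[m]\to\Delta[n]\times\Delta[m]$ is a composite of pushouts along vertex-$0$ inclusions $\Delta[0]\to\Delta[k]$, exactly as in the paper's remark on inner horns; either route closes the gap. Your ``conservativity of lifts of identities'' computation of the fibers is correct but is just proposition \ref{prop:rightfibequiv}(2) applied to the totally degenerate simplex, and once the levelwise identification above is in hand it is not needed.
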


\section{Kan extensions}\label{sec:kan}
 We now deal with Kan extensions of right fibrations with values in spaces. Suppose $i: C \to D$ is a map between Segal spaces. Base change along $i$ induces a functor
\[
i^* : \SSp_{/D} \rightarrow \SSp_{/C}
\]
by setting $i^*(Y \to D) = Y \times_{D} C$.

\begin{prop}[Left Kan extensions exist] Base change along $i$ is the right adjoint in a simplicial Quillen pair
\[
i_! : \SSp_{/C} \leftrightarrows \SSp_{/D} : i^*
\]
where both categories are equipped with the right fibration model structure. The derived left adjoint $\LL i_!$ is called the homotopy left Kan extension along $i$.
\end{prop}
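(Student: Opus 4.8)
The plan is to realize both right fibration model structures as left Bousfield localizations of the corresponding slice model structures, exhibit the usual slice adjunction at the un-localized level, and then pass to the localizations via the standard descent criterion. For the morphism $i : C \to D$, post-composition with $i$ gives a functor $i_! : \SSp_{/C} \to \SSp_{/D}$, sending $(X \to C)$ to $(X \to C \xrightarrow{i} D)$, whose right adjoint is strict pullback $i^*(Y \to D) = Y \times_D C$; this is the base-change functor of the statement, and the adjunction is genuine because $\SSp$ has all pullbacks. Both functors are simplicial: mapping objects in a slice are fibers of the ambient mapping objects, $i_!$ is the identity on underlying simplicial spaces, and $i^*$ is its right adjoint, so the adjunction is a simplicial adjunction. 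Thus it only remains to verify the Quillen condition, and everything after this point is forced once $i_!$ is shown to be left Quillen.

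First I would check that $(i_!, i^*)$ is a Quillen pair for the \emph{un-localized} slice structures, namely the slices of the Segal space model structure. In any slice model structure, weak equivalences and cofibrations are created by the forgetful functor to the ambient category, and the forgetful functors to $\SSp$ commute with $i_!$ (the underlying object of $i_!(X \to C)$ is again $X$). Hence $i_!$ carries cofibrations to cofibrations and Segal weak equivalences to Segal weak equivalences, so it is left Quillen there, with $i^*$ right Quillen.

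Next I would localize. The right fibration model structures are obtained from these slice structures (proposition \ref{prop:rfibmodel}) by left Bousfield localization at the sets
$S_C = \{\, \Delta[0] \xrightarrow{d^1} \Delta[1] \xrightarrow{g} C : g \in C_1 \,\}$ and $S_D = \{\, \Delta[0] \xrightarrow{d^1} \Delta[1] \xrightarrow{f} D : f \in D_1 \,\}$ respectively, working with the injective (Reedy) model structure so that $d^1$ is already a cofibration and no replacement of the generators is needed. By the standard descent criterion for left Quillen functors into localizations \cite[Theorem 3.3.20]{Hirschhorn}, $i_!$ descends to a left Quillen functor between the localized structures provided $\LL i_!$ sends each map of $S_C$ to a weak equivalence in the right fibration model structure on $\SSp_{/D}$. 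But $i_!$ carries $(\Delta[0] \xrightarrow{d^1} \Delta[1] \xrightarrow{g} C)$ precisely to $(\Delta[0] \xrightarrow{d^1} \Delta[1] \xrightarrow{i(g)} D)$, which is the element of $S_D$ indexed by $i(g) \in D_1$; since the domains and codomains are cofibrant, the derived and underived images agree. Therefore $i_!(S_C) \subseteq S_D$, the criterion holds, and $(i_!, i^*)$ is a simplicial Quillen pair for the right fibration model structures.

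The only real content is the inclusion $i_!(S_C) \subseteq S_D$, and it is clean precisely because the generating local equivalences are indexed functorially by the $1$-simplices of the base. One mild point worth flagging is that $i$ need not be surjective on $1$-simplices, so $S_D$ may be strictly larger than the image of $S_C$; but the descent criterion only requires that the \emph{images} of $S_C$ become equivalences downstairs, which they do, so no surjectivity hypothesis is needed. The simplicial enrichment assertion is formal, given that both slice structures are simplicial and $i_!$ is a simplicial left adjoint, so I would not expect any obstacle there.
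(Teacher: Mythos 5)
Your argument is correct, but it verifies the Quillen condition by the criterion dual to the one the paper uses, so the two proofs genuinely diverge after the (identical) construction of the adjunction. You work entirely on the left-adjoint side: having observed that $(i_!, i^*)$ is Quillen for the un-localized slices of the Segal space model structure (immediate, since all three classes are created by the forgetful functors and $i_!$ is the identity on underlying simplicial spaces), you note that $i_!$ carries the localizing set $S_C$ into $S_D$, so the descent criterion for left Bousfield localizations applies with no further input. The paper instead works on the right-adjoint side: it checks that $i_!$ preserves (generating) cofibrations and then invokes proposition \ref{prop:rfibbase} --- stability of right fibrations under homotopy base change --- to conclude that $i^*$ preserves fibrant objects, which by the standard criterion for Quillen pairs between localizations (a right adjoint that is Quillen before localizing and preserves local fibrant objects is Quillen after) finishes the argument. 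Your route is the more elementary of the two, since the inclusion $i_!(S_C) \subseteq S_D$ is pure bookkeeping and requires nothing about right fibrations, whereas the paper's route leans on an earlier nontrivial proposition but has the side benefit of exhibiting explicitly what $\RR i^*$ does to fibrant objects. The one point to state carefully in your version is that the descent criterion is applied to the composite with the (left Quillen) identity functor into the localized structure on $\SSp_{/D}$, so that ``weak equivalence downstairs'' means weak equivalence in the right fibration model structure --- which is exactly where the elements of $S_D$ are equivalences by construction; as you say, no surjectivity of $i$ on $1$-simplices is needed.
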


\begin{proof}
The left adjoint $i_!$ is given by post-composition
\[
i_! ( Z \xrightarrow{f} C) = Z \xrightarrow{i \circ f} D 
\]
When $Z = \Delta[n]$, we have
$$
\map_{D}(i_!(f), X) \xrightarrow{\cong} \map_{C}(f, i^*X)
$$
This map is induced, via the Yoneda lemma, by $X_{i(f)} \xrightarrow{\cong} (i^*X)_{f}$. The general case follows.

 By construction, $i_!$ preserves cofibrations since it preserves generating cofibrations. We also know that right fibrations are stable under base change (proposition \ref{prop:rfibbase}) which implies that $i^*$ preserves fibrant objects (i.e. those right fibrations which are also levelwise fibrations).
\end{proof}

The next topic is how to characterize of Kan extensions. Let $i : C \to D$ be a map of Segal spaces. The key construction is a bisimplicial space $D/i$ which in bidegree $(m,n)$ consists of strings of composable morphisms in $\sD$ of the form
\[
i(c_0) \gets \dots \gets i(c_n) = d_0 \gets \dots \gets d_m \gets d
\]
(the last $n$ morphisms come from morphisms in $C$). Functoriality in each simplicial direction is given by composing, or forgetting, morphisms (face maps) or adding identities (degeneracy maps).

 More precisely, let us define
\[
(D/i)_{m,n} := D_{m+n+1} \times_{D_n} C_n
\]
where $D_{m+n+1} \to D_n$ is the map induced by the inclusion $\gamma := d_{n+1} \dots d_{m+n+1} : [n] \to [m+n+1]$, i.e. $\gamma(i) = i$.

 On morphisms, $D/i$ is constructed as follows. Fix $m$. The degeneracy maps in the $n$-direction
\[
s_i : (D/i)_{m,n} \to (D/i)_{m,n+1}
\]
(for $0 \leq i \leq n$) are given by $(s_{i+m+1}, s_i)$. The face maps are defined as $(d_{i+m+1}, d_i)$ (inspection shows that it induces a map of pullbacks). Now fix $n$. Given a map $j : [m] \to [m^{\prime}]$ in $\Delta$, extend by the identity to a map $J : [m + n + 1] \to [m^{\prime} + n + 1]$. That is, 
\[
J(i) = 
\left\{
	\begin{array}{ll}
		i & \mbox{if } i \leq n \\
		j(i-n) & \mbox{if } n \leq i \leq m + n \\
		i-m-n & \mbox{if } i > m + n
	\end{array}
\right.
\]
Then declare
\[
(D/i)_{m^{\prime},n} \to (D/i)_{m,n}
\]
to be the map given by $(J^*, id_{C_n})$. We leave to the reader the verification that $(D/i)_{m,n}$ as defined is indeed a bisimplicial space.

 There is an alternative definition of $D/i$ which highlights functoriality. Namely, for $n \geq 0$, let $(D/i)_m$ be the simplicial space defined as the homotopy limit of the diagram
\[
D_m \times \Delta[0] \to D^{\Delta[m]} \xleftarrow{d_{0}} D^{\Delta[m+1]} \xrightarrow{d_1 \dots d_{m-1}} D \xleftarrow{} C
\]
(recall that $d^1 \dots d^{m-1} : [0] \to [n]$ induces the ultimate target map). This is clearly functorial in $[m]$. Under the assumption that $C$ and $D$ are Segal spaces, it is not hard to see that the two definitions of $D/i$ are equivalent.

\medskip
 Note that, for each $n \geq 0$, there is a map of simplicial spaces 
\begin{equation}\label{eq:mapDi}
(D/i)_{\bullet, n} \to D
\end{equation}
induced by $\beta^* : D_{m+n+1} \to D_m$, where $\beta : [m] \to [m+n+1]$ sends $i$ to  $n+1+i$. With the second description of $D/i$, this map is simply the projection onto the $D_m \times \Delta[0]$ factor.

\begin{lem}\label{lem:lkanfib}
For each $n \geq 0$, (\ref{eq:mapDi}) is a right fibration.
\end{lem}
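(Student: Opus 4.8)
The plan is to verify the second criterion of Proposition \ref{prop:rightfibequiv} directly for the map $(D/i)_{\bullet,n} \to D$, for each fixed $n$ (I rename the running simplicial index to $m$ to avoid clashing with the fixed $n$). Since $D$ is a Segal space, that criterion says it is enough to show, for every $m \ge 1$, that the square obtained by applying $a_m^\ast$ (the operator selecting the initial vertex) is homotopy cartesian, i.e. that the natural map
\[
(D/i)_{m,n} \longrightarrow (D/i)_{0,n} \times^h_{D_0} D_m
\]
is a weak equivalence, where $(D/i)_{0,n} \to D_0$ is the structure map (\ref{eq:mapDi}) in degree $0$ and $D_m \to D_0$ selects vertex $0$. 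Note that the $m=1$ instance of this square is precisely the defining square (\ref{eq:rfibsquare}); establishing the square for all $m$ at once therefore supplies both the hypothesis and the conclusion of Proposition \ref{prop:rightfibequiv}, and by that proposition criterion $(2)$ implies criterion $(1)$, which is exactly the assertion that $(D/i)_{\bullet,n}\to D$ is a right fibration.

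To produce the required weak equivalence I would split each string at the object indexed $n+1$. An element of $(D/i)_{m,n} = D_{m+n+1}\times^h_{D_n} C_n$ is a chain of $m+n+1$ composable arrows in $D$ whose initial $n$ arrows are pulled back from $C$; cutting at the vertex $n+1$ presents it as a head of length $n+1$ (objects $0,\dots,n+1$) glued to a tail of length $m$ (objects $n+1,\dots,m+n+1$) over the shared object indexed $n+1$ in $D_0$. Because $D$ is a Segal space, this cutting is a weak equivalence
\[
D_{m+n+1} \xrightarrow{\;\simeq\;} D_{n+1}\times^h_{D_0} D_m ,
\]
the head mapping to $D_0$ by its last vertex and the tail by its first. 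The key point is that the operator $\gamma^\ast : D_{m+n+1}\to D_n$ defining the pullback with $C_n$ factors through the head $D_{n+1}$ (it only touches objects $0,\dots,n$), whereas the gluing with the tail occurs at the disjoint object $n+1$. Hence the two homotopy pullbacks commute and
\[
(D/i)_{m,n}\;\simeq\;\bigl(D_{n+1}\times^h_{D_n} C_n\bigr)\times^h_{D_0} D_m\;=\;(D/i)_{0,n}\times^h_{D_0} D_m ,
\]
an identification compatible with the projection onto the $D_m$ factor, hence with the structure map (\ref{eq:mapDi}). This is exactly the homotopy cartesian square required by criterion $(2)$.

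The step I expect to require the most care is the bookkeeping of simplicial operators: one must confirm that $a_m^\ast$, the degree-$0$ structure map, and the two legs of the Segal splitting align as claimed (head $=$ objects $0,\dots,n+1$, tail $=$ objects $n+1,\dots,m+n+1$), so that the commuting-homotopy-pullbacks step genuinely applies. I would also arrange at the outset, by fibrant replacement of the maps involved (using right properness, as in Example \ref{ex:segal}), that the strict pullbacks in the definition of $D/i$ compute homotopy pullbacks, so that every manipulation above is homotopy-invariant. Once this is in place the argument is entirely formal and, as with Proposition \ref{prop:rightfibequiv}, works over any right proper base; in particular the hypothesis that $C$ be a Segal space is not needed here.
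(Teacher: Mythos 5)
Your argument is correct and is exactly the paper's (one-line) proof, spelled out: split $D_{m+n+1}\simeq D_{n+1}\times^h_{D_0}D_m$ at the vertex $n+1$ using the Segal condition, observe that the pullback against $C_n$ only involves the head, and conclude via criterion $(2)$ of Proposition \ref{prop:rightfibequiv}. Your added remarks (that the $m=1$ case supplies the hypothesis of that proposition, and that no Segal condition on $C$ is needed) are accurate bookkeeping the paper leaves implicit.
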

\begin{proof}
Since $D$ is a Segal space, we have that $D_m \times_{D_0} \times D_{n+1} \simeq D_{m+n+1}$, and so it follows that $D_m \times_{D_0} (D/i)_{0,n} \simeq (D/i)_{m,n}$.
\end{proof}

\begin{prop}\label{prop:lkan}
Let $X$ be a right fibration over $C$. The simplicial space given by
\[
[m] \mapsto \hocolimsub{[n] \in \Delta} (D{/i})_{m,n} \times^h_{C_n} X_n
\] 
is a right fibration over $D$ and computes $\LL i_!(X)$, the homotopy left Kan extension of $X$ along $i$.
\end{prop}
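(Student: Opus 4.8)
The plan is to prove the two assertions separately: that the displayed simplicial space, call it $F(X)$, is a right fibration over $D$, and that it computes $\LL i_!(X)$. Throughout write $F^{(n)} := (D/i)_{\bullet,n}\times^h_{C_n}X_n$, so that $F(X)$ is the realization in the $n$-direction (the homotopy colimit over $[n]\in\Delta^{\op}$) of the simplicial diagram $n\mapsto F^{(n)}$.

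First I would show that each $F^{(n)}\to D$ is a right fibration, directly from the computation in the proof of Lemma~\ref{lem:lkanfib}. That computation produces an equivalence $(D/i)_{m,n}\simeq D_m\times^h_{D_0}(D/i)_{0,n}$, and hence, after the homotopy pullback along $X_n\to C_n$ (with $X_n,C_n$ constant in $m$), an equivalence $F^{(n)}_m\simeq D_m\times^h_{D_0}F^{(n)}_0$ for every $m$. By Proposition~\ref{prop:rightfibequiv}(2) this is exactly the condition that $F^{(n)}\to D$ be a right fibration. To pass to $F(X)$ I would use that homotopy colimits of spaces commute with homotopy pullbacks (universality of colimits in spaces): since $D_m$ and $D_0$ do not depend on $n$, realizing in the $n$-direction gives $F(X)_m\simeq\hocolim_n\bigl(D_m\times^h_{D_0}F^{(n)}_0\bigr)\simeq D_m\times^h_{D_0}F(X)_0$, so $F(X)\to D$ again satisfies Proposition~\ref{prop:rightfibequiv}(2) and is a right fibration.

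For the identification with $\LL i_!(X)$ I would use the Quillen adjunction $(i_!,i^*)$ already established and argue that $F$ corepresents the same homotopy functor on right fibrations $Y\to D$ as $\LL i_!$. Both functors preserve homotopy colimits: $F$ because it is assembled from homotopy colimits and homotopy pullbacks, each commuting with homotopy colimits in spaces, and $\LL i_!$ as a derived left adjoint. Resolving $X$ by the canonical resolution by representables $\Delta[n]\to C$ (as in Lemma~\ref{lem:unitGC}) therefore reduces the claim to objects $\Delta[n]\xrightarrow{\sigma}C$, and then, via the weak equivalence $\Delta[0]\to\Delta[n]$ selecting the vertex $0$, to objects $\Delta[0]\xrightarrow{c}C$. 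On the latter the two functors manifestly agree: $i_!(\Delta[0]\xrightarrow{c}C)$ is $\Delta[0]\xrightarrow{i(c)}D$, while a direct computation---restrict the $C_n$-factor to the totally degenerate simplex on $c$ and use the Segal condition to get $(D/i)_{\bullet,n}|_{s^n c}\simeq D/i(c)$ for every $n$---collapses $F(\Delta[0]\xrightarrow{c}C)$ onto $D/i(c)$, which is $\Delta[0]\xrightarrow{i(c)}D$ in the right fibration model structure by the Yoneda lemma (Lemma~\ref{lem:yoneda}). Evaluating against $Y$ and using Lemma~\ref{lem:terminal} gives $\RR\map_{D}(F(\Delta[0]\xrightarrow{c}C),Y)\simeq Y_{i(c)}\simeq\RR\map_{C}(\Delta[0]\xrightarrow{c}C,\,i^*Y)$, which matches the value of $\RR\map_D(\LL i_!(-),Y)$ and, assembling naturally over the resolution, identifies $F(X)$ with $\LL i_!(X)$.

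The main obstacle is exactly the reduction from $\Delta[n]$ to $\Delta[0]$ carried out inside $F$: it requires that $F$ send the generating weak equivalences $\Delta[0]\to\Delta[n]$ of the right fibration model structure to weak equivalences over $D$. Since both source and target are right fibrations over $D$ by the first part, this may be checked on homotopy fibers over $D_0$, where it becomes the assertion that the bar construction applied to a ``string'' $\Delta[n]\to C$ collapses onto its ultimate-target vertex. I expect this homotopy-invariance of $F$ to be the technical heart of the argument; it cannot be bypassed by a term-by-term comparison, since $\RR\map_{D}(F^{(n)},Y)$ sees the ultimate-source object $i(c_n)$ of a string whereas the corresponding term $\RR\map_{C}(\Delta[n]\times X_n,i^*Y)$ sees the ultimate-target object $i(c_0)$, the two being reconciled only after passing to $\holim_n$. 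It is precisely here that the combinatorics of the maps $\gamma$ and $\beta$ defining $D/i$ must be used carefully.
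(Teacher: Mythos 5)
Your first half (that the formula defines a right fibration over $D$) is correct and is essentially the paper's own argument: Lemma \ref{lem:lkanfib} gives $(D/i)_{m,n}\simeq D_m\times^h_{D_0}(D/i)_{0,n}$, and stability of homotopy colimits in $\Sp$ under homotopy base change lets you push this through $\hocolim_n$ and conclude via Proposition \ref{prop:rightfibequiv}. Your computation for $X=(\Delta[0]\xrightarrow{c}C)$ is also correct: the Segal condition on $D$ collapses $(D/i)_{m,n}\times^h_{C_n}\{s^n c\}$ to $(D/i(c))_m$ independently of $n$, and the Yoneda lemma identifies $D/i(c)$ with $i_!(\{c\})$ in the right fibration model structure.

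The gap is the step you yourself flag as the ``main obstacle'' and then do not carry out: the assertion that the explicit formula $F$ sends the right-fibration weak equivalences $\Delta[0]\to\Delta[n]$ (over $C$) to weak equivalences over $D$. This is not automatic --- $F$ is a formula, not a derived left Quillen functor, and a priori it only preserves \emph{levelwise} equivalences and homotopy colimits --- and without it your verification on $\Delta[0]\xrightarrow{c}C$ does not propagate back through the resolution by representables $\Delta[n]\times K$. This is precisely the content the paper supplies. After reducing to the fibrant representable $C/C_0$ (for which $X_n=C_{n+1}$), it proves the collapse
\[
\hocolimsub{[n]\in\Delta}\,(D/i)_{m,n}\times_{C_n}C_{n+1}\;\simeq\;(D/i)_{m,0}
\]
by a two-step zigzag: first the levelwise equivalence $(D/i)_{m,n+1}\xrightarrow{\ \sim\ }(D/i)_{m,n}\times_{C_n}C_{n+1}$, which uses that $D$ is a Segal space, and then the contraction of the augmented simplicial object $[n]\mapsto(D/i)_{m,n+1}$ onto $(D/i)_{m,0}$ via the extra degeneracies coming from $s_0:D_k\to D_{k+1}$ (checked on $0$-simplices, which suffices because both sides are right fibrations over $D$ by the first part). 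Your proof needs either this computation or its analogue for $X=\Delta[n]\to C$ (the bar collapse onto the ultimate-target vertex, reconciling the $i(c_n)$ versus $i(c_0)$ asymmetry you correctly identify); as written, the technical heart of the proposition is acknowledged but absent.
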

\begin{proof}
For the duration of this proof, we will sometimes write $\sL$ for the simplicial space over $D$ in the statement of the proposition. Because in $\Sp$ homotopy colimits are stable under homotopy base change, we have that
\[
\bigl(\hocolimsub{\Delta} (D{/i})_{0} \times^h_{C} X \bigr) \times^h_{D_0} D_m \simeq \hocolimsub{\Delta} \bigl((D{/i})_{0} \times^h_{D_0} D_m \times^h_{C} X \bigr)
\]
where we have suppressed $n$ from the notation.
By lemma \ref{lem:lkanfib}, the target is weakly equivalent to 
\[
\hocolimsub{\Delta} (D{/i})_{m} \times^h_C X = \sL_{m}
\]
and so, by proposition \ref{prop:rightfibequiv}, we conclude that $\sL \to D$ is a right fibration.

\medskip
 The second claim is that there is a natural zigzag of weak equivalences relating $\LL i_! (X)$ and $\sL$ in the right fibration model structure.

 We may reduce statement to representables, i.e. $X$ of the form $C/K \to C$ for $K \subset C_0$. We take $K = C_0$ (the general case is analogous). In that case, $\LL i_! (X)$ is identified with $i_0 : \Delta[0] \times C_0 \rightarrow{} D$ -- first, using the Yoneda lemma, we replace $X$ with $\Delta[0] \times C_0 \xrightarrow{} C$, and then we apply (non-derived) $i_!$.

 We are left to identify $i_0 : \Delta[0] \times C_0 \rightarrow{} D$ and $\sL$. Applying the Yoneda lemma again, we identify $i_0 : \Delta[0] \times C_0 \rightarrow{} D$ with
\[
D/C_0 \to D
\]
Note that $(D/C_0)_m = (D/i)_{m,0}$. In the remainder of the proof, we will construct a weak equivalence\footnote{This can be regarded as an analogue of the Yoneda lemma in the following formulation:
\[
\map_{\sD}(-,i(-)) \otimes_{\sC} \map_{\sC}(-,c) \cong \map_{\sD}(-,i(c))
\]
}
\[
\hocolimsub{[n] \in \Delta}(D/i)_{m,n} \times_{C_n} C_{n+1} \simeq (D/i)_{m,0}
\]
The claimed weak equivalence will be in fact given as a zigzag of weak equivalences.

As a first step, we can replace the hocolim in the display above with
\begin{equation}\label{eq:saug}
\hocolimsub{[n] \in \Delta} (D/i)_{m,n+1} 
\end{equation}
Indeed, there is a map of simplicial objects
\begin{equation}\label{eq:saugm}
(D/i)_{m,n+1} \to (D/i)_{m,n} \times_{C_n} C_{n+1}
\end{equation}
induced by the map between homotopy pullbacks of the rows in the diagram
\[
	\begin{tikzpicture}[descr/.style={fill=white}] 
	\matrix(m)[matrix of math nodes, row sep=2.5em, column sep=2.5em, 
	text height=1.5ex, text depth=0.25ex] 
	{
	D_{m+n+2} & D_{n+1} & C_{n+1}\\
	D_{m+n+1} & D_n & C_{n+1} \\
	}; 
	\path[->,font=\scriptsize] 
		(m-1-1) edge node [auto] {$\gamma^*$} (m-1-2);
	\path[->,font=\scriptsize] 
		(m-2-1) edge node [auto] {$\gamma^*$} (m-2-2);
	\path[->,font=\scriptsize] 
		(m-1-1) edge node [left] {$d_0$} (m-2-1);
	\path[->,font=\scriptsize] 		
		(m-1-2) edge node [auto] {$d_0$} (m-2-2);
	\path[->,font=\scriptsize] 
		(m-1-3) edge node [above] {$p$} (m-1-2);
	\path[->,font=\scriptsize] 		
		(m-1-3) edge node [auto] {$id$} (m-2-3);
	\path[->,font=\scriptsize] 		
		(m-2-3) edge node [above] {$d_0p$} (m-2-2);

	\end{tikzpicture} 
\]
(recall $\gamma$ is the map $\gamma(i) = i$, wherever defined). But the left-hand square in the diagram is homotopy cartesian since $D$ is a Segal space, and it follows that the induced map on homotopy pullbacks is a weak equivalence. Thus, $(\ref{eq:saugm})$ is a levelwise weak equivalence of bisimplicial spaces.

 Now, the face operator $d_1 : D_{m+2} \to D_{m+1}$ (which corresponds to the composition of the last two morphisms) yields a map
\[
(D/i)_{m,1} \to (D/i)_{m,0} 
\]
which is an augmentation for $[n] \mapsto (D/i)_{m,n+1}$, and thus gives rise to a map 
from (\ref{eq:saug}) to $(D/i)_{m,0}$. It remains to see that this map is a levelwise weak equivalence of simplicial spaces (in the variable $m$). Since both simplicial spaces are right fibrations over $D$, it is enough to show that it is a weak equivalence on $0$-simplices, i.e. for $m = 0$. This can be done by exhibiting a contracting homotopy using the extra degeneracy maps provided by $s_{0} : D_k \to D_{k+1}$.
\end{proof}

Setting $D = *$ in proposition \ref{prop:lkan}, we recover proposition \ref{prop:hocolim} with $\sV = \Sp$.

\begin{rem}
When $i$ is the nerve of a functor of $\Sp$-categories and $F$ is a presheaf on $C$, we can set $X = \sG(F)$ in proposition \ref{prop:lkan} to recover the classical formulae computing the homotopy left Kan extension of $F$ along $i$. In particular, when $D$ is a point, $\LL i_! \sG(F)$ computes the usual homotopy colimit of $F$ as in \cite{BousfieldKan}.
\end{rem}

\subsection{Invariance under $\infty$-categorical equivalences}\label{sec:inv}

\begin{prop}\label{prop:DKequivCD}
If $i : C \to D$ is a Dwyer-Kan equivalence between Segal spaces, then
\[
i_! : \SSp_{/C} \leftrightarrows \SSp_{/D} : i^*
\]
is a simplicial Quillen equivalence, where both sides have the right fibration model structure.
\end{prop}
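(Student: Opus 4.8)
The plan is to apply the standard criterion (e.g. \cite[Prop.~1.3.13]{Hovey}) that a Quillen pair is a Quillen equivalence precisely when the right adjoint reflects weak equivalences between fibrant objects and the derived unit is a weak equivalence on cofibrant objects. The previous proposition already shows that $(i_!, i^*)$ is a simplicial Quillen pair for the right fibration model structures, so it remains to verify these two conditions.

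For reflection, recall that the fibrant objects over $D$ are the right fibrations $Y \to D$ which are levelwise fibrations, and that a map between such is a weak equivalence exactly when it induces a weak equivalence on homotopy fibers $Y_d \to Y'_d$ for every $d \in D_0$ (Proposition \ref{prop:rfibmodel}). Since $i^* Y = Y \times_D C$ has fiber $Y_{i(c)}$ over $c \in C_0$, the hypothesis that $i^* h \colon i^* Y \to i^* Y'$ is a weak equivalence says exactly that $Y_{i(c)} \to Y'_{i(c)}$ is a weak equivalence for all $c$. I would then use two properties of right fibrations: that a fiber $Y_d$ transports along a morphism of $D$, the transport being a weak equivalence when the morphism is a homotopy equivalence (this follows from the square (\ref{eq:rfibsquare}) together with conservativity, as in the proof of Proposition \ref{prop:fibcss}), and that these transports are natural in $Y \to Y'$. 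Combined with essential surjectivity of $i$ (Definition \ref{defn:DKcat}), every $d \in D_0$ is equivalent in $D$ to some $i(c)$, so $Y_d \to Y'_d$ is identified up to weak equivalence with $Y_{i(c)} \to Y'_{i(c)}$; hence $h$ is a weak equivalence over $D$.

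For the derived unit I would first reduce to representables. Resolving a cofibrant $X$ by representables as in the proof of Lemma \ref{lem:unitGC} and using that both $\LL i_!$ and $\RR i^*$ commute with geometric realization ($i_!$ is a left adjoint and $\RR i^*$ is homotopy base change, which is universal in $\Sp$), it suffices to treat $X = (\Delta[n] \xrightarrow{f} C)$; and since the first-vertex inclusion $\Delta[0] \to \Delta[n]$ is a weak equivalence in the right fibration model structure (the rephrasing of Proposition \ref{prop:rightfibequiv}), it suffices to treat $X = (\Delta[0] \xrightarrow{c} C)$. Here $\LL i_! X = (\Delta[0] \xrightarrow{i(c)} D)$, whose fibrant replacement over $D$ is the overcategory $D/i(c) \to D$ by the Yoneda lemma (Lemma \ref{lem:yoneda}); therefore $\RR i^* \LL i_! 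X \simeq (D/i(c)) \times_D C$, a right fibration over $C$ with fiber $\mathrm{mor}_D(i(c'), i(c))$ over $c' \in C_0$. On the other side, Yoneda identifies $X$ over $C$ with $C/c$, whose fiber over $c'$ is $\mathrm{mor}_C(c', c)$, and the derived unit becomes the comparison $C/c \to (D/i(c)) \times_D C$ which on fibers is the map $\mathrm{mor}_C(c', c) \to \mathrm{mor}_D(i(c'), i(c))$ induced by $i$. This is a weak equivalence because $i$ is fully faithful, so the derived unit is a weak equivalence on $0$-simplices, hence in the right fibration model structure.

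The step I expect to be the main obstacle is the derived-unit computation, specifically pinning down the identification $\RR i^* \LL i_! (\Delta[0] \to C) \simeq (D/i(c)) \times_D C$ and checking that the derived unit is genuinely the fully faithful comparison on fibers; the reflection step should be comparatively routine once the transport-of-fibers property is in hand.
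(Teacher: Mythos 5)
Your proposal is correct and follows essentially the same route as the paper: both arguments verify that the derived unit is an equivalence by reducing (via resolution by representables and the first-vertex reduction) to $X = (\Delta[0]\xrightarrow{c} C)$, where Yoneda identifies the unit with the fully-faithfulness comparison $C/c \to i^*(D/i(c))$, and both establish that $\RR i^*$ reflects weak equivalences between fibrant objects using essential surjectivity together with fiberwise completeness of right fibrations (Proposition \ref{prop:fibcss}) to transport fibers along homotopy equivalences in $D$. The identification you flag as the main obstacle is handled in the paper exactly as you propose, by noting that $\LL i_!(\{c\}) = \{i(c)\}$ has fibrant replacement $D/i(c)$ over $D$ and that $\RR i^*$ of a right fibration is computed by the underived pullback.
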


\begin{proof}
We show that the derived unit is a weak equivalence and that $\RR i^*$ is homotopy conservative. Let $X$ be a simplicial space over $C$. The derived unit is the morphism (over $C$)
\begin{equation}\label{eq:unitCD}
X \to \RR i^* \LL i_! X
\end{equation}

 We begin with the case $X = \{c\} : \Delta[0] \xrightarrow{c} C$. By the Yoneda lemma (lemma \ref{lem:yoneda}), the vertical maps in the diagram below are weak equivalences in the right fibration model structure
\[
	\begin{tikzpicture}[descr/.style={fill=white}] 
	\matrix(m)[matrix of math nodes, row sep=2.5em, column sep=2.5em, 
	text height=1.5ex, text depth=0.25ex] 
	{
	\{c\} &  \RR i^* \LL i_! (\{c\}) \\
	C/c & \RR i^* \LL i_! (C/c) \\
	}; 
	\path[->,font=\scriptsize] 
		(m-1-1) edge node [auto] {} (m-1-2);
	\path[->,font=\scriptsize] 
		(m-2-1) edge node [auto] {} (m-2-2);
	\path[->,font=\scriptsize] 
		(m-1-1) edge node [left] {$\simeq$} (m-2-1);
	\path[->,font=\scriptsize] 		
		(m-1-2) edge node [auto] {$\simeq$} (m-2-2);
	\end{tikzpicture} 
\]

But $i_! (\{c\}) = D/i(c)$ and $\{c\}$ is cofibrant, and so $\RR i^* \LL i_! (\{c\}) \simeq \RR i^*(D/i(c))$. Moreover, $D/i(c) \to D$ is a right fibration, so $\RR i^*(D/i(c)) \simeq i^*(D/i(c))$. This shows that the top right-hand space in the square is weakly equivalent to $i^*(D/i(c))$.

 Now, by hypothesis $i$ is fully faithful. That is to say, the map $C/c \to i^*(D/i(c))$ is a weak equivalence. This shows that the horizontal maps in the diagram above are weak equivalences.

 When $X$ is of the form $f : \Delta[n] \to C$ corresponding to a string of maps $c_0 \gets \dots \gets c_n$, the claim follows from the previous case by proposition \ref{prop:rightfibequiv}. The general case can be proved by reduction to the previous two cases, in the same spirit as the proofs of the previous propositions. Given a simplicial space $X$ over $C$, resolve it by representables $Y_{\bullet}$. Since $i_!$ and $i^*$ commute with colimits (the latter since it is a left adjoint, the former since colimits in spaces are stable under base change), the map (\ref{eq:unitCD}) is a weak equivalence if
\[
Y_{k} \to \RR i^* \LL i_! Y_{k}
\]
is a weak equivalence for each $k \geq 0$. This concludes the proof that the derived unit is a weak equivalence. 

It remains to prove that $\RR i^*$ is homotopy conservative, i.e. that it detects weak equivalences between fibrant objects. Suppose that $f : X \to Y$ is a map of right fibrations over $D$ and that the induced map $i^*X \to i^*Y$ is a weak equivalence of right fibrations over $C$. This means that for each object $c \in C_0$ the induced map of homotopy fibers $$X_{i(c)} \to Y_{i(c)}$$ is a weak equivalence of spaces. But, since $i$ is essentially surjective, for every $d \in D_0$ there exists some $c \in C_0$ such that $i(c) \simeq d$. Since right fibrations are fiberwise complete (proposition \ref{prop:fibcss}), the spaces $X_{i(c)}$ and $X_d$ are weakly equivalent, and similarly for $Y$. It follows that $f$ is a weak equivalence in degree $0$, and so in every degree.
\end{proof}

\begin{cor}\label{cor:comp}
Suppose $B$ is a Segal space (not necessarily complete) and let $i : B \to {B}^{\sharp}$ be its Rezk completion, i.e. a Dwyer-Kan equivalence to a \emph{complete} Segal space $B^{\sharp}$. Then $i^*$ is a Quillen equivalence between the right fibration model structures on $B$ and $B^{\sharp}$.
\end{cor}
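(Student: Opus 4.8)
The plan is to recognize this statement as an immediate instance of Proposition \ref{prop:DKequivCD}. First I would check that the two hypotheses of that proposition are met. Both $B$ and $B^{\sharp}$ must be Segal spaces: $B$ is one by assumption, and $B^{\sharp}$ is a \emph{complete} Segal space, hence in particular a Segal space. The map $i : B \to B^{\sharp}$ must be a Dwyer-Kan equivalence, but this is exactly the defining property of the Rezk completion (it is constructed in \cite{Rezk} precisely as a Dwyer-Kan equivalence to a complete Segal space). So both hypotheses hold with $C = B$ and $D = B^{\sharp}$.

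With the hypotheses verified, Proposition \ref{prop:DKequivCD} applies verbatim and yields that the adjunction
\[
i_! : \SSp_{/B} \leftrightarrows \SSp_{/B^{\sharp}} : i^*
\]
is a simplicial Quillen equivalence, where both sides carry the right fibration model structure. In particular $i^*$ is the right adjoint of a Quillen equivalence, which is precisely the assertion of the corollary. There is no genuine obstacle to overcome here: the entire mathematical content is carried by Proposition \ref{prop:DKequivCD}, and the only role of this corollary is to specialize that proposition to the Rezk completion, thereby justifying (together with the comparison theorem of section \ref{sec:qcats}) the earlier remark that the completeness hypothesis on the base can always be dropped, since the homotopy theories of right fibrations over $B$ and over $B^{\sharp}$ coincide.
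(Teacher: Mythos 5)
Your proposal is correct and matches the paper's intent exactly: the corollary is stated without proof precisely because it is the immediate specialization of Proposition \ref{prop:DKequivCD} to the Rezk completion map $i : B \to B^{\sharp}$, whose hypotheses you verify correctly. Nothing further is needed.
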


\begin{rem}
Since, by \cite{Bergner}, any Segal space $B$ may be strictified (i.e. is Dwyer-Kan equivalent) to some simplicially enriched category $\sC$, theorem A and the invariance statement above together imply that any right fibration over $B$ may be strictified to a space-valued presheaf on $\sC$.
\end{rem}

\bibliography{gcbib}
\bibliographystyle{abbrv}
\end{document}